\def\R{\mathbb R}
\def\E{\mathbb E}
\newtheorem{thm}{Theorem}[section]
\newtheorem{rem}{Remark}[section]
\DeclareMathOperator*{\esssup}{ess\,sup}
\crefname{assumptionalt}{Assumption}{Assumptions}
\numberwithin{equation}{section}
\newcommand{\ome}{\omega}
\newcommand{\MV}{\mathcal{V}}
\newcommand{\MF}{\mathcal{F}}
\newcommand{\MB}{\mathcal{B}}
\newcommand{\MU}{\mathcal{U}}
\newcommand{\MA}{\mathcal{A}}
\newcommand{\MX}{\mathcal{X}}
\newcommand{\tx}{\tilde{x}}
\newcommand{\tB}{\tilde{B}}
\newcommand{\tX}{\tilde{X}}
\newcommand{\tY}{\tilde{Y}}
\newcommand{\ty}{\tilde{y}}
\newcommand{\tZ}{\tilde{Z}}
\newcommand{\tb}{\tilde{b}}
\newcommand{\tA}{\tilde{A}}
\newcommand{\tf}{\widetilde{f}}
\newcommand{\tF}{\tilde{F}}
\newcommand{\tJ}{\tilde{J}}
\newcommand{\tM}{\tilde{M}}
\newcommand{\tN}{\tilde{N}}
\newcommand{\tD}{\tilde{D}}
\newcommand{\tC}{\tilde{C}}
\newcommand{\tG}{\tilde{G}}
\newcommand{\tih}{\tilde{h}} 
\newcommand{\tg}{\tilde{g}} 
\newcommand{\tz}{\tilde{z}} 
\newcommand{\tMV}{\tilde{\MV}}
\newcommand{\tH}{\tilde{H}} 
\newcommand{\tsig}{\tilde{\sigma}}
\newcommand{\hu}{\hat{u}} 
\newcommand{\hX}{\hat{X}} 
\newcommand{\hY}{\hat{Y}} 
\newcommand{\hZ}{\hat{Z}} 
\newcommand{\hB}{\hat{B}} 
\newcommand{\hD}{\hat{D}} 
\newcommand{\hM}{\hat{M}} 
\newcommand{\hG}{\hat{G}}
\newcommand{\bY}{\bar{Y}}
\newcommand{\bX}{\bar{X}}
\newcommand{\bxi}{\bar{\xi}}
\newcommand{\bZ}{\bar{Z}}
\newcommand{\by}{\bar{y}}
\newcommand{\bz}{\bar{z}}
\newcommand{\bBBeta}{\bar{\bm{\eta}}}
\newcommand{\balpha}{\bar{\alpha}}
\newcommand{\bMA}{\bar{\MA}}
\newcommand{\bu}{\bar{u}}
\newcommand{\bJ}{\bar{J}}
\newcommand{\bMV}{\bar{\MV}}
\newcommand{\BD}{\mathbf{D}}
\newcommand{\EB}{\E_{\bullet}}
\newcommand{\N}{\mathbb N}
\renewcommand{\P}{\mathbb P}
\newcommand{\mc}{\mathscr{C}}
\newcommand{\md}{\mathbf{D}}
\newcommand{\mb}{\mathcal{B}}
\newcommand{\mf}{\mathcal{F}}
\newcommand{\ml}{\mathcal{L}}
\newcommand{\mpe}{\mathcal{P}}
\newcommand{\ma}{\mathcal{A}}
\newcommand{\mbb}{\mathcal{B}}
\newcommand{\U}{\mathcal{U}}
\newcommand{\D}{\mathscr{D}}
\newcommand{\BBeta}{\bm{\eta}}
\newcommand{\BX}{\bm{X}}
\newcommand{\be}{\begin{equation}}
\newcommand{\ee}{\end{equation}}
\newcommand{\BB}{\mathbf{B}}
\newcommand{\vertiii}[1]{{\left\vert\kern-0.25ex\left\vert\kern-0.25ex\left\vert #1 
    \right\vert\kern-0.25ex\right\vert\kern-0.25ex\right\vert}}
\DeclareMathAlphabet{\mathdutchcal}{U}{dutchcal}{m}{n}
\newcommand{\mn}{ \eta^{(2)}}
\newcommand{\mnb}{ \bar{\eta}^{(2)} }
\newcommand{\sroughnogeo}{{\mc^\alpha}}
\newcommand{\vep}{\varepsilon}
\newtheorem{prop}[thm]{Proposition}
\newtheorem{cor}[thm]{Corollary}
\newtheorem{lemma}[thm]{Lemma}
\newtheorem{definition}[thm]{Definition}
\newtheorem{example}[thm]{Example}
\newtheorem{Assumption}[thm]{Assumption}
\newcommand{\defeq}{\vcentcolon=}
\begin{document}

\title[rough stochastic PMP]{Pontryagin Maximum Principle for rough stochastic systems and pathwise stochastic control}\thanks{We thank Peter Karl Friz and Khoa L\^e for helpful remarks and fruitful discussions. Financial support
from the DFG CRC/TRR 388 “Rough Analysis, Stochastic Dynamics and Related Fields”, Projects B04
and B05, is gratefully acknowledged. Zhang is partially supported by the Fundamental Research Funds for
the Central Universities, NSF of China and Shandong (Grant Numbers 12031009, ZR2023MA026), Young
Research Project of Tai-Shan (No.tsqn202306054).}
 \author[U. Horst]{Ulrich Horst}\address{Departemnt of Mathematics and School of Business and Economics, Humboldt University Berlin, Unter den Linden 6, 10099 Berlin, Germany. }
 \email{horst@math.hu-berlin.de}

 \author[H. Zhang]{Huilin Zhang}\address{Research Center for Mathematics and Interdisciplinary Sciences, Frontiers Science Center for Nonlinear Expectations, Ministry of Education, Shandong University, Qingdao 266237, China; Department of Mathematics, Humboldt University Berlin, Unter den Linden 6, 10099 Berlin, Germany.} 
 \email{huilinzhang@sdu.edu.cn}




\begin{abstract}
We analyze a novel class of rough stochastic control problems that allows for a convenient approach to solving pathwise stochastic control problems with both non-anticipative and anticipative controls. We first establish the well-posedness of a class of controlled rough SDEs with affine rough driver and establish the continuity of the solution w.r.t.~the driving rough path. This allows us to define pathwise stochastic control problems with anticipative controls. Subsequently, we apply a flow transformation argument to establish a necessary and sufficient maximum principle to identify and characterize optimal strategies for rough and hence pathwise stochastic control problems. We show that the rough and the corresponding pathwise stochastic control problems share the same value function. For the benchmark case of linear-quadratic problems with bounded controls a similar result is shown for optimal controls. 
\end{abstract}

\maketitle

{\bf Keywords:} rough paths, rough control, Pontryagin Maximum Principle 

\vspace{1mm}

{\bf AMS subject classification: } Primary 60L20, 60H10, 93E20


\renewcommand{\baselinestretch}{1.0}
\setlength{\parskip}{4pt}

\mbox{ }

\section{Introduction}

A standard stochastic control problem comprises a stochastic differential equation (SDE) of the form
 \begin{equation} \label{SDE} 
 \begin{split}
	dX_s & = b(s,X_s, u_s) ds + \sigma(s,X_s,u_s) d W_s,  \quad 0 \le t \le s \le T 
\end{split} 
\end{equation}
with an initial condition $X_t =x$, where $W$ is a multi-dimensional Brownian motion and $ u = (u_t)_{t \in [0, T]}$ is a progressively measurable control, and the objective is to minimize an expected cost functional of the form 
\begin{equation}\label{eq: cost_function_intro-}
    J(  u; t, x) = \E \left[ \int_t^T h(s, X_s, u_s)ds + g(X_T )  \Big{|} \mf^W_t \right]
\end{equation}
over a set of admissible controls. 
Textbook control problems of the above form have been extended in numerous directions, including mean-field and singular control problems, which are by now also well understood. 

A less well-understood extension is pathwise stochastic control. Pathwise stochastic control models arise naturally when a decision maker can condition her choice on the realization of some exogenous noise process. Examples include stochastic volatility and utility optimization models where investors can condition their decisions on additional information such as the realization of some volatility process \cite{BBFP, buckdahn2007pathwise} as well as deep learning networks \cite{bayer2023stability, gassiat2024gradient} in infinite layer limits, where the future of processes resulting from the initialization of deep neural network weights is fully available.     

The difference between classical control problems including models with partial observations and pathwise stochastic control problems boils down to measurability assumptions on admissible controls. Depending on the specific application, different measurability conditions may arise that require different mathematical settings, depending on whether controls may or may not - {\sl explicitly} - depend on the future of a given noise process.   


\subsection{Pathwise stochastic control problems}\label{Sec:introtopathwise}

The interest in pathwise stochastic and anticipative control goes back at least to Davis and Burstein \cite{DB92} who considered a stochastic optimal control problem with anticipative controls as a family of deterministic control problems parametrized by the paths of the driving Wiener process and a Lagrange multiplier that is adapted to future increments of the Wiener process. Similar Lagrange multipliers were used to a model of optimal stopping in the follow-up work \cite{DK94}. 

Lions and Souganidis \cite{LS98, LS98b} introduced a pathwise stochastic control problem, where one controls the solution of the equation \eqref{SDE} for a given realization of some component of the driving Brownian motion. Specifically, they considered a stochastic control problem where the state dynamics follows an SDE of the form 
\begin{equation}\label{eq:double_bm_dynamics_intro2}
\begin{split}
    dX^{{  u}}_s & = b(s, X^{{  u}}_s, u_s)ds + \sigma(s, X^{{  u}}_s, u_s)dW_s + \hat \sigma(s, X^{{  u}}_s) \circ dB_s(\omega), \quad s \in [t,T] \\
    X_t & = x
\end{split}
\end{equation}
where $W$ and $B$ are independent Brownian motions whose canonical filtrations are denoted $\mf^W$ and $\mf^B$, respectively, $B_\cdot(\omega)$ is a given realization/path of the Brownian motion $B$, and $\circ dB$ denotes the Stratonovich integral. Their objective is to minimize a conditional expected cost functional of the form
\begin{equation}\label{eq:conditional_cost_function_intro'}
    J( u; t, x) = \E \left[ \int_t^T h(s, X^{{  u}}_s, u_s)ds + g(X^{{  u}}_T ) \Big{|} \mf^W_t \otimes \mf^B_T \right]
\end{equation}
over a set $\MA$ of $({\mf}^W_t)_{t \in [0,T]}$-progressively measurable (``non-anticipative'') controls ${  u}$. Buckdahn and Ma \cite{buckdahn2007pathwise} proved a dynamic programming principle for this problem that allowed them to prove that the value function  
\begin{equation} \label{SDE12} 
	v(\omega, t, x) := \mbox{essinf}_{{ u} \in \ma} J(  u; t, x)
\end{equation}
is a stochastic viscosity solution to a non-linear stochastic HJB equation. To overcome the many and often subtle challenges that arise from the fact that the value function is now a random field for which no minimizing sequence may exist, they employed a Doss–Sussmann transformation to reduce their pathwise stochastic control problem to a more standard control problem that they called the {\sl wider sense control problem}. 

The wider sense problem features a state dynamics that follows an SDE driven only by $W$, but whose coefficients depend on the Brownian path $B(\omega)$. Moreover, the set of admissible controls is given by a set of $\tilde \ma$ of $(\mf^W_t \otimes \mf^B_T)_{t \in [0, T]}$-progressively measurable processes that depend on the full path of $B$ (``anticipative controls''). Under suitable boundedness and continuity assumptions on the model parameters, it was then shown that the value functions of the pathwise stochastic and the wider sense control problem coincide.  

A priori, only nonanticipative controls are allowed in \cite{buckdahn2007pathwise,LS98, LS98b}. Anticipative controls were primarily a vehicle to solve the wider sense and eventually the pathwise control problem. At the same time, one would expect {\sl optimal} controls to be anticipative; a specific example where this is indeed the case is given in \cite{buckdahn2007pathwise}. This suggests to work right away with anticipative controls, and to consider instead the value function 
\begin{equation} \label{SDE13} 
	\tilde v(\omega, t, x) := \mbox{essinf}_{{  v} \in \tilde \ma} J(  v; t, x).
\end{equation}

This value function corresponds to an {\sl anticipative pathwise stochastic  control problem} where the state dynamics is perturbed by an observable, yet uncontrolled exogenous noise. A typical example would be the representative player's optimization problem in a mean-field game with deterministic common noise if one is interested in understanding how the mean-field game equilibrium varies with the exogenous perturbation. 

Working with anticipative controls is closer in spirit to traditional stochastic control, where it is commonly assumed that controls are adapted to the filtration with respect to which conditional expectations are computed. In the pathwise stochastic control setting, this is the filtration $(\mf^W_t \otimes \mf^B_T)_{t \in [0, T]}$.  

Using anticipative controls has been hindered so far by the fact that the state dynamics \eqref{eq:double_bm_dynamics_intro2} is ill-posed as a standard It\^o or Stratonovich integral equation when controls are allowed to depend on the full path $B(\omega)$. In particular, it may not be possible to define the optimal state dynamics of a non-anticipative pathwise stochastic control problem in the framework of traditional stochastic analysis. 

In this paper, we introduce a mathematical framework based on rough path theory that allows us to introduce a broad class of rough stochastic control problems, in terms of which we can define pathwise stochastic control problems with anticipative controls. We establish necessary and sufficient maximum principles for such problems and show that rough and anticipative control problems are equivalent. 


\subsection{Rough stochastic control problems}

To the best of our knowledge, Diehl et al. \cite{Diehl2013StochasticCW} were the first to apply rough path theory to optimal control. They considered a controlled dynamics of the form 
 \begin{equation} \label{SDE2} 
 \begin{split}
	dX_s & = b(X_s, u_s) ds + \Sigma(X_s) d {\bm \eta}_s,  \quad s \in [t,T] \\
	 X_t & = x
\end{split} 
\end{equation}
driven by a geometric rough path ${\bm \eta}$. They obtained a version of Pontryagin’s maximum principle, characterized their value function as the unique solution of a rough HJB equation, and obtained a duality result for the corresponding non-anticipative stochastic control problem. 

As in \cite{buckdahn2007pathwise,LS98, LS98b}  the diffusion coefficient in \eqref{SDE2} is uncontrolled. The existing literature on pathwise stochastic control does indeed mostly focus on the case where the control process appears in the drift, but not in the diffusion term. A notable exception is the work of Allan and Cohen \cite{allan2020pathwise}. They studied the degeneracy phenomenon induced by directly controlling the diffusion coefficient in great detail, and introduced a method to resolve the degeneracy whilst retaining dynamic programming, albeit at the cost of altering the primal problem in a significant manner.

We consider a class of rough stochastic control problems where, for any admissible control $ u \in \ma$ the state dynamics follows of rough stochastic differential equations (rSDEs) of the form 
\begin{equation}\label{eq:rough_stochastic_dynamics_2}
 \begin{split}
    dX^{\bm{\eta}, u}_s & = b(s, X^{\bm {\eta}, u }_s, u_s)ds + \sigma( s, X^{\bm {\eta}, u}_s,u_s)dW_s + \hat \sigma(  s, X^{\bm {\eta}, u}_s) d\bm{\eta}_s, \quad s \in [t,T] \\
   X^{\bm{\eta},u}_t & = x
\end{split}
\end{equation}
that feature both a Brownian and a rough path integral driven by a deterministic geometric rough path $\bm{\eta}$ with H\"older exponent $\alpha \in (\frac{1}{3}, \frac{1}{2}]$ (concrete assumptions of $(b,\sigma, \hat{\sigma})$ for the well-posedness of the above equation are given in Assumption \ref{ass:m-linear-growth_lip}). 

Building on L\^e's stochastic sewing lemma \cite{Le20} a meaning to {\sl uncontrolled} rSDEs with {\sl bounded} coefficients was first given in \cite{friz2021roughito}; see  \cite{FLZ25} for a review of related applications of rSDEs. Their result has recently been extended to a class of linear but deterministic (hence uncontrollable) coefficients in \cite{BCN24}. We establish the existence and uniqueness of solutions to (controlled) rSDEs with unbounded, random coefficients. Motivated by the analysis of linear-quadratic (LQ) problems our key assumption is that rough driver $\hat \sigma$ is affine in the state variable; all other coefficients may be non-linear and stochastic. 

The goal is then to minimize the cost functional 
\begin{equation}\label{eq: cost_function_intro}
    J^{\bm \eta}(  u; t, x) = \E \left[ \int_t^T h(s, X^{\bm {\eta}, u}_s, u_s)ds + g(X^{\bm {\eta}, u}_T )  \Big{|} \mf^W_t \right]
\end{equation}
over a set of non-anticipative controls $\ma$ (assumptions of $(h,g)$ are given in Assumption \ref{ass:control_assumptions}). The running and terminal cost function may again be unbounded. 
The corresponding value function is denoted
\[
	\bar v({\bm \eta},t,x) := \mbox{essinf}_{{  u} \in \ma} J^{\bm \eta}(  u; t, x).
\]

Having shown that the state dynamics is well defined, we apply the Doss-Sussmann-type transformation 
\begin{equation}\label{DS10}
	\phi_s(x) = x + \int_t^s \hat \sigma_r(\phi_r(x)) d\bm \eta_r
\end{equation}
to reduce the rough stochastic control problem to a standard control problem for which necessary and sufficient maximum principles are readily available. Undoing the transformation allows us to obtain the desired SMPs for rough control problems, the second main contribution of this paper. 

Our transformed control problem can be viewed as a version of the wider sense control problem in \cite{buckdahn2007pathwise}. 
Since our existence results for rough SDEs with affine driver are required to prove that the transformation \eqref{DS10} is well defined, we first (must) give meaning to the state dynamics before we can introduce our notion of the wider sense control problem. 


\subsection{From rough stochastic to anticipating control problems}

Having solved the rough control problem, which is interesting in its own right, we return to the original pathwise stochastic control problem. 

Since every Brownian path $B(\omega)$ can almost surely be lifted to a Brownian rough path ${\bm B}(\omega)$ (either in the It\^o sense or the Stratonovich sense), there are at least two ways to link pathwise stochastic and rough stochastic control problems. The first is to evaluate the value function of the rough problem at ${\bm \eta} = {\bm B}(\omega)$ i.e.~to consider the function 
\[
    \Big( \mbox{inf}_{{  u} \in \ma} J^{{\bm \eta}}(  u; t, x) \Big) \Big |_{{\bm \eta = \bm B(\omega)}}. 
\]

Our results on the rough stochastic control problem guarantee that this approach is well-defined, and our maximum principle for rough stochastic control problems provides necessary and sufficient conditions for controls to be optimal in the pathwise, non-anticipative setting.

The second approach is to substitute the Brownian rough path into the dynamics \eqref{eq:rough_stochastic_dynamics_2} and solve the corresponding control problem, which results in the value function
\[
    \mbox{essinf}_{{  u} \in \ma} J^{{\bm B}(\omega)}(  u; t, x) \qquad \mbox{respectively,} \qquad 
    \mbox{essinf}_{{  u} \in \bar \ma} J^{{\bm B}(\omega)}(  u; t, x)
\]
depending on whether only non-anticipative or anticipative controls are admissible. When only non-anticipative controls are allowed it has been shown in the companion paper \cite{FLZ24} that both approaches are equivalent under certain conditions, that is, a.s.
\[
    \Big( \mbox{inf}_{{  u} \in \ma} J^{{\bm \eta}}(  u; t, x) \Big) \Big |_{{\bm \eta = \bm B(\omega)}} =
    \mbox{essinf}_{{  u} \in \ma} J^{{\bm B}(\omega)}(  u; t, x).
\]
In that work, the authors also proved a dynamical programming principle and related rough HJB equations for controlled rough SDEs with {\sl bounded} coefficients. Our focus will be on unbounded coefficients. 

Our existence of solution results for rough SDEs allows us to give meaning to the state dynamics \eqref{eq:rough_stochastic_dynamics_2} and hence to justify the second approach and to define the corresponding control problem if anticipative controls are allowed. In fact, any anticipative control ${  v} \in \bar \ma$ is of the form
\[
	v_t(\omega) = u(t,\omega,{\bm B}(\omega))
\]
for some measurable function $u$ where $u(\cdot,\cdot,{\bm \eta})$ is a non-anticipative control, for all $\bm \eta$ from a suitable set. This allows us to define the solution $X^{{  v}}$ to equation \eqref{eq:rough_stochastic_dynamics_2} almost surely as
\[
	X^{{  v}}(\omega) := X^{{\bm B}(\omega),{  u}(\cdot,{\bm B}(\omega))}(\omega).
\]

Under suitable assumptions on the model parameters we prove that the random variable
\[
    \omega \mapsto X^{{\bm B}(\omega),{  u}(\cdot,{\bm B}(\omega))}(\omega) 
\]    
is measurable (this is not trivial) so that the conditional cost functional \eqref{eq:conditional_cost_function_intro'}, and hence the pathwise stochastic control problem, is well defined up to an integrability assumption on the cost coefficients. Moreover, we show that the SMP for rough stochastic control problems yields an SMP for pathwise stochastic control problems and that under an additional continuity assumption on the cost function, the randomized rough stochastic and the pathwise stochastic problem share the same value function: 
\[
    \Big( \mbox{inf}_{{  u} \in \ma} J^{{\bm \eta}}(  u; t, x) \Big) \Big |_{{\bm \eta = \bm B(\omega)}} =
    \mbox{essinf}_{{  u} \in \bar \ma} J^{{\bm B}(\omega)}(  u; t, x).
\]

This result is not obvious, as the sets of admissible controls in the two settings are very different. In the rough setting, admissible controls are required to be $\mf^W$-progressively measurable; no a priori measurability of controls w.r.t.~to the driving rough path is required. By contrast, anticipative controls are measurable w.r.t.~ Brownian paths.  

If the rough problem admits an optimal control $u^*(\cdot,\BBeta)$ that is measurable in the rough path $\BBeta$ as is the case for LQ problems, then $u^*(\cdot,{\bm B}(\omega))$ is almost surely an optimal control to the anticipative control problem, and the optimal state dynamics is well defined as the solution to a rSDE.

In conclusion, our contributions are twofold. First, on the rough stochastic analysis side, we establish the well-posedness of rough SDEs driven by affine rough stochastic drivers, and we derive a representation of the rough Doss–Sussmann transformation when the driver is an affine stochastically controlled function. Second, we apply this rough stochastic analysis framework to obtain Pontryagin’s maximum principle—both necessary and sufficient conditions—for pathwise stochastic control problems, allowing for admissible controls that may be anticipative. Our framework accommodates linear–quadratic models that were beyond the scope of earlier works.

The remainder of this paper is organized as follows. Section 2 reviews selected results from the rough path theory. Section 3 establishes the existence, uniqueness, and regularity of solutions for stochastic systems with affine rough drivers. Section 4 introduces the rough control problem and establishes maximum principles for such systems. Section 5 introduces a framework for solving anticipative pathwise control problems.  


\section{Preliminaries on rough paths and rough stochastic integrals}\label{sec:forward_rSDE}

We start recalling basic results on rough paths, rough stochastic integrals and related rough analysis results that will be used throughout. We refer to the textbook \cite{friz2021roughito} for more details on rough analysis. 

In what follows, $\alpha \in (\frac13, \frac12)$ is a H\"older coefficient, $[0,T]$ is a fixed time interval, and $(V,|\cdot|)$ is a Euclidean space. $\ml(V^1;V^2)$ denotes the space of linear functions between the Euclidean spaces $V^1$ to $V^2$. 

All random variables and stochastic processes are defined on a common complete filtered probability space $(\Omega,\mf,(\mf_t)_{t},\mathbb{P})$ that supports a $d_W$-dimensional Brownian motion $W$. For any $m < \infty$ we denote by $L^m$ the space of all $V$-valued random variables $\xi$ such that 
\[
 \lVert \xi \rVert_m \defeq \E[\lvert \xi \rvert^m]^{\frac{1}{m}} < \infty.
\]
The space of all a.s.~finite $V$-valued random variables is denoted $L^\infty$ and
\[
    \lVert \xi \rVert_\infty \defeq \esssup_\omega |\xi(\omega) |
\]

For $2\le m \le n$ we denote by $H^{m,n}({[s,t]};V)$ with $H^m({[s,t]};V) := H^{m,m}({[s,t]};V)$ and  $S^m({[s,t]};V)$, the spaces of adapted, $V$-valued progressively measurable processes $X$, equipped with the respective norms 
$$
\|X \|_{H^{m,n},{[s,t]}}:=\left[ \E  [\int_s^t \lvert X_r \rvert^n dr  ]^\frac{m}{n}\right]^{\frac{1}{m}} ,\ \ \ \| X \|_{S^m,[s,t]}:=  \lVert \sup_{r \in [s, t]} \lvert X_r \rvert \rVert_m < \infty.
$$

For any function $f:\R^d \rightarrow V$ that has continuous bounded derivatives up to order $n$, we write 
$$
|f|_n:=\max_{i=0,...,n} \sup_{x\in \R^d}|D^if(x)|. $$
For a one-parameter function/path $\eta:[0,T] \rightarrow V$ we set $\delta \eta_{s,t}:= \eta_t - \eta_s.$ The space of $V$-valued $\alpha$-H\"older continuous functions is denoted $C^\alpha([0,T];V)$. For any two-parameter function $A: \Delta_{[s,t]} \rightarrow V$ where
\[
    \Delta_{[s,t]} \defeq \{(u,v) \in [s, t]^2 ~ | ~ s \leq u < v \leq t\}
\]    
the difference operator $\delta A_{s,u,t}$ is defined by
$$
    \delta A_{s,u,t}:= A_{s,t}-A_{s,u}- A_{u,t},\ \ \ s \le u \le t,
$$
and the $\alpha$-H\"older norm is denoted
$$
\lvert A \rvert_{\alpha,[s,t]} \defeq \sup_{(u, v) \in \Delta_{[s, t]}} \frac{\lvert A_{u,v} \rvert}{|v - u|^\alpha}.
$$
The set of $\alpha$-H\"older continuous two-parameter functions is denoted $C^\alpha_2([0,T];V)$. For a random two-parameter $V$-valued function $A: \Omega \times \Delta_{[s,t]} \rightarrow V$ we set 
$$
\lVert A \rVert_{\alpha, m,[s,t]} \defeq \sup_{(u,v) \in \Delta_{[s, t]}} \frac{\lVert A_{u, v} \rVert_m}{\lvert v - u \rvert^\alpha}.
$$ 

The space of all functions for which the above norm is finite is denoted $C^\alpha_2([0,T];L^m)$ and $C^\alpha_2 L^m$ denotes the subspace of all $\mf \otimes \mb(\Delta_{[0,T]})/\mb(V)$-measurable elements of $C^\alpha_2([0,T];L^m )$. 

In the following, we usually omit the time interval in our notations and norms if $[s,t]=[0,T]$ and write `$\lesssim_{\theta}$' to indicate that an inequality holds up to a generic constant that depends on a parameter $\theta$.  

\begin{definition} 
\begin{itemize}
\item[$(1)$] A pair of (deterministic) continuous functions $\BBeta:=(\eta, \eta^{(2)}):[0,T] \times \Delta \rightarrow \R^d \times \R^{d \times d}$ is called a $\alpha$-H\"older continuous {\it rough path}, if the following two conditions are satisfied.
\[
(a)\  |\delta \eta |_{\alpha},\ |\eta^{(2)} |_{2\alpha} < \infty; \ \ (b)\  \eta^{(2)}_{s,t}- \eta^{(2)}_{s,u}- \eta^{(2)}_{u,t}= \delta \eta_{s,u} \otimes \delta \eta_{u,t},\ \text{ for any $s\le u \le t$.}
\]  
The space of $\alpha$-H\"older continuous rough paths is denoted  $\mc^{\alpha}([0,T],\R^d)$ and is equipped with the following seminorm and metric: for any $\BBeta=(\eta,\mn), \bBBeta=(\bar \eta, {\bar \eta}^{(2)} ) \in \mc^\alpha,$ 
$$
\vertiii{\bm \eta}_\alpha \defeq \lvert \delta \eta \rvert_\alpha +\sqrt{ \lvert \mn \rvert_{2 \alpha} }, \ \ \ 
\rho_{\alpha}(\bm \eta, \bm {\bar \eta}) = \lvert \delta \eta - \delta \bar \eta \rvert_\alpha + \lvert \eta^{(2)} - \bar \eta^{(2)} \rvert_{2\alpha}.
$$

\item[$(2)$.] Let $\mc^{0,\alpha}_g$ be the space of $\alpha$-H\"older geometric rough paths, i.e. the completion of canonically lifted smooth paths (i.e. if $\eta$ is smooth, $\BBeta:=(\eta, \int \eta  d\eta)$ is the canonical lift of $\eta$) under the rough path metric. Note that $\mc^{0,\alpha}_g$ is Polish and $\mc^{0,\alpha}_g \subset \mc^{\alpha}$ (see, e.g. \cite{friz2021roughito} for more information on this space).


\end{itemize}
\end{definition}

To introduce stochastic controlled rough paths and to define rough integrals, we introduce for any integrable random two-parameter process $A$ the two-parameter conditional expectations process $\E_\bullet A$ by
$$
(s,t;\ome)\mapsto \E[A_{s,t}|\mf_s](\ome) =: \E_s[A_{s,t}](\ome).
$$

\begin{definition}[{\cite[Definition 3.1]{friz2021roughito}}]\label{def:contro-rp} 
Let $\BBeta \in \mc^{\alpha}([0,T],\R^d)$ be a rough path.
\begin{itemize}

\item[$(1)$.] 
A pair \((Z,Z')\) is called a \textit{$\eta$-stochastic controlled rough path} of $p$-integrability and $(\beta,\beta')$-H\"older regularity with $\beta, \beta ' \in (0,\alpha],$ if the following holds: 

\begin{enumerate}  
\item[$(a)$] The process \(Z=(Z_t)_{t \in [0,T]}\) is $V$-valued, progressively measurable and 
\[
      \|Z_0\|_p < +\infty 
    \quad \text{and} \quad
    \|\delta Z\|_{\beta,p} := \sup_{0 \leq s < t \leq T} \frac{ \|\delta Z_{s,t} \|_p }{|t-s|^\beta} < +\infty.
\]
\item[$(b)$] The process \(Z'=(Z'_t)_{t \in [0,T]}\) is $\ml(\R^{d};V)$-valued\footnote{Since $\ml(\R^d;V)$ is finite dimensional in this paper, we omit its norm $|\cdot|$ in the following}, progressively measurable and
\[
      \|Z'_0\|_p < +\infty \quad \text{and} \quad \|\delta Z'\|_{\beta' ,p} := \sup_{0 \leq s < t \leq T} \frac{ \| \delta Z'_{s,t} \|_p }{|t-s|^{ \beta'}} < +\infty.
\]
\item[$(c)$] The two-parameter process 
\[
    R^Z_{s,t} := \delta Z_{s,t} - Z'_s \delta \eta_{s,t}, \quad (s,t)\in\Delta
\]
satisfies
\[
    \|\mathbb{E}_\bullet R^Z\|_{\beta+\beta', p } := \sup_{0 \leq s < t \leq T} \frac{\|\mathbb{E}_s R^Z_{s,t} \|_p}{|t-s|^{\beta+\beta'}} < + \infty.
\]
\end{enumerate}
In this case we write $(Z,Z') \in \mathbf{D}_\eta^{\beta,\beta'} L_{p}([0,T];V)$ and  $(Z,Z') \in \mathbf{D}_\eta^{2\beta} L_{p }([0,T];V)$ if $\beta = \beta'$. We equip the space $\mathbf{D}_\eta^{\beta,\beta'} L_{p}$ with the seminorm 
$$
\|(Z, Z')\|_{\eta, \beta,\beta', p} \defeq  \lVert \delta Z \rVert_{\beta, p} +   \left \lVert Z'_0 \right \rVert_{p} + \lVert \delta Z' \rVert_{\beta', p} + \lVert \E_\bullet R^Z \rVert_{\beta + \beta', p}.
$$ 
Moreover, for any $\BBeta, \bar\BBeta \in \mc^\alpha ,$ and pairs $(Z,Z') \in \md^{\beta, \beta'}_{  \eta} L_{p },$ $(\bar Z, \bar Z') \in  \md^{\beta, \beta'}_{\bar  \eta} L_{p }$ we define the distance
    \be
    \begin{split}
            d_{\eta, \bar \eta, \beta, \beta',p} \left( (Z,Z'), (\bar Z, \bar Z') \right):= & \ \|\delta (Z - \bar Z)\|_{\beta,p } + \|\delta (Z' - \bar Z')\|_{\beta',p } + \|Z_0- \bar Z_0\|_p  \\
            & + \|Z'_0- \bar Z'_0\|_p + \|E_\bullet R^Z- E_\bullet \bar R^{\bar Z}\|_{\beta+ \beta',p},
    \end{split}
    \ee
    where $$\bar R^{\bar Z}_{s,t}:= \delta \bar Z_{s,t} - \bar Z'_s \delta \bar \eta_{s,t}, \quad s,t \in \Delta.$$

\item[$(2)$] For any pair $(Z,Z') \in \mathbf{D}_\eta^{\beta,\beta'} L_{p},$ if $R^{Z} \in C^{\beta+\beta'}_2 L_p$, we write $(Z,Z') \in \D^{\beta,\beta'}_{\eta} L_p$, and define for any $(\bZ, \bZ') \in \D_{\bar{\eta}}^{\beta,\beta'} L_{p}$, the seminorm
\begin{align*}
    \vertiii{ Z,Z'; \bar Z, \bar{Z}' }  & ~ := \|\delta (Z - \bar Z)\|_{\beta,p } + \|\delta (Z' - \bar Z')\|_{\beta',p } \\ & \quad \quad + \|Z_0- \bar Z_0\|_p + \|Z'_0- \bar Z'_0\|_p + \|  R^Z-   \bar R^{\bar Z}\|_{\beta+ \beta',p}.
\end{align*}
\end{itemize}
We may omit the lower index $\eta$ whenever there is no risk of confusion.
\end{definition} 


The following lemma establishes useful estimates for controlled rough paths that will be used throughout. The proof is not difficult and is available on request.

\begin{lemma}\label{productest}
    Let $(Z,Z') \in \md^{\beta,\beta'}_\eta L_{\infty} $ and $(z,z') \in \md^{\beta,\beta' }_\eta L_{m}$ for some $m \in [2,\infty]$. Then 
    $$
    (Zz,(Zz)' ):= (Zz, Z'z+Zz') \in \md^{\beta,\beta' }_\eta L_{m}.
    $$
    Moreover, if
    $$
        \esssup_{t,\ome}|Z_t| \vee   \|(Z,Z')\|_{\eta, \beta,\beta',\infty}   \le K
        \quad \mbox{and} \quad \|z_0\|_m \vee \|z'_0\|_m \le K_0
    $$
    then, for some implied constants that depend increasingly on $T$, 
\be
\begin{split}\label{Fy}
   & \|Zz\|_{ \beta,m} \lesssim K\|z\|_{ \beta,m} + KK_0,\\ 
  &  \|(Zz)'\|_{ \beta',m} \lesssim K(\|z\|_{ \beta,m}+\|z'\|_{ \beta',m}   + K_0), \\  
  & \|   R^{Zz}\|_{ \beta ,m} \lesssim K (\|z\|_{ \beta,m}+\|z'\|_{ \beta',m}  +  K_0)(1\vee \|  \eta\|_{\beta}), \\ 
  &  \| \EB R^{Zz}\|_{ \beta+\beta',m} \lesssim K (\|z\|_{ \beta,m}+ \| \EB R^z\|_{ \beta+\beta',m}  + K_0),
\end{split}
\ee
where $a \vee b:= \max{(a,b)}$.
\end{lemma}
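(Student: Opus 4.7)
The plan is to apply Leibniz-type decompositions at two levels---the increment $\delta(Zz)$ and the rough remainder $R^{Zz}$---and to estimate each piece by pulling the $L^\infty$-bounded parts of $(Z,Z')$ out of the $L^m$ norms, while combining them with the hypotheses on $(z,z')$. A recurring ingredient is the uniform control $\|z_t\|_m \le K_0 + T^\beta \|z\|_{\beta,m}$ (and the analogue for $z'$) obtained from $\|z_t\|_m \le \|z_0\|_m + \|\delta z_{0,t}\|_m$. Equally central is the observation that the $(\beta+\beta')$-regularity of $\EB R^Z$ is only available \emph{after} conditioning, so the decomposition of $R^{Zz}$ must be chosen with this in mind.

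The first two inequalities in (\ref{Fy}) are essentially routine. Writing $\delta(Zz)_{s,t} = Z_s\,\delta z_{s,t} + \delta Z_{s,t}\, z_t$, pulling out the $L^\infty$ factors, and invoking the uniform $L^m$-bound on $z_t$ yields the first line. Expanding
\[
\delta(Zz)'_{s,t} = \delta Z'_{s,t}\, z_t + Z'_s\, \delta z_{s,t} + \delta Z_{s,t}\, z'_t + Z_s\, \delta z'_{s,t}
\]
and treating each summand in the same way, absorbing any H\"older-exponent mismatch into a harmless factor $T^{|\beta-\beta'|}$, gives the second.

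The key algebraic identity for the remainder is
\[
R^{Zz}_{s,t} \;=\; Z_s R^z_{s,t} \;+\; R^Z_{s,t}\, z_s \;+\; \delta Z_{s,t}\,\delta z_{s,t},
\]
obtained from the symmetric second-order Leibniz expansion $\delta(Zz)_{s,t} = Z_s \delta z_{s,t} + \delta Z_{s,t}\, z_s + \delta Z_{s,t}\,\delta z_{s,t}$ by subtracting $(Z'_s z_s + Z_s z'_s)\delta\eta_{s,t}$ and regrouping. The third (unconditional) estimate then follows from the crude bounds $\|R^Z_{s,t}\|_\infty \lesssim K(1\vee \|\eta\|_\beta)|t-s|^\beta$, which comes directly from $R^Z = \delta Z - Z'\delta\eta$ via the triangle inequality, $\|R^z_{s,t}\|_m \lesssim (\|z\|_{\beta,m} + \|\eta\|_\beta \|z'_s\|_m)|t-s|^\beta$, and $\|\delta Z_{s,t}\,\delta z_{s,t}\|_m \le K\|z\|_{\beta,m}|t-s|^{2\beta}$.

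The last estimate is the delicate step and is where the choice of decomposition pays off. Conditioning the identity on $\mf_s$ pulls the $\mf_s$-measurable prefactors $Z_s$ and $z_s$ outside to yield
\[
\E_s R^{Zz}_{s,t} \;=\; Z_s\, \E_s R^z_{s,t} \;+\; (\E_s R^Z_{s,t})\, z_s \;+\; \E_s[\delta Z_{s,t}\,\delta z_{s,t}].
\]
The first two terms are $O(|t-s|^{\beta+\beta'})$ in $L^m$ directly from the standing hypotheses, and the cross term is controlled by $\|\delta Z_{s,t}\|_\infty\|\delta z_{s,t}\|_m \le K\|z\|_{\beta,m}|t-s|^{2\beta}$, which is dominated by $T^{\beta-\beta'}|t-s|^{\beta+\beta'}$ since $2\beta \ge \beta+\beta'$. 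I expect this last step to be the main obstacle: the symmetric expansion, using $z_s$ rather than $z_t$, is essential so that both prefactors remain $\mf_s$-measurable; the naive alternative $\delta(Zz)=Z_s\delta z + \delta Z\cdot z_t$ would produce a term of the form $(\E_s R^Z)\,z_t$ with no $\mf_s$-measurable structure on the right factor, forcing a pointwise rather than conditional bound on $R^Z$ and destroying the $(\beta+\beta')$-gain required for the stated estimate.
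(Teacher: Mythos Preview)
Your argument is correct and is exactly the standard one; the paper itself does not supply a proof (it states only that ``the proof is not difficult and is available on request''), so there is nothing to compare against. Your choice of the symmetric Leibniz expansion $R^{Zz}_{s,t}=Z_sR^z_{s,t}+R^Z_{s,t}z_s+\delta Z_{s,t}\delta z_{s,t}$, with both prefactors $\mf_s$-measurable, is precisely the point that makes the conditional estimate go through, and your explanation of why the alternative decomposition fails is accurate.
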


Having introduced rough paths, we proceed to introduce rough stochastic integrals. The next proposition follows by \cite[Theorem 3.5]{friz2021roughito} with $m=n=p$.  We give a proof in the appendix to keep the paper self-contained.

\begin{prop}[{\cite[Theorem 3.5]{friz2021roughito}}] \label{prop:roughinteg}
Let $\BBeta \in \mc^{\alpha}([0,T],\R^d)$ be a rough path and $\beta  \in (0,\alpha],$ $\beta' \in (0,\beta],$ such that $\alpha+ \beta > \frac12$ and $\alpha+\beta + \beta' > 1$. For any $p \in [2,\infty ]$ and any $\eta$-stochastic controlled rough path $(Z,Z') \in \BD_\eta^{\beta,\beta'}L_p$, the following rough stochastic integral is well-defined:
\be
  \int_0^t Z_r d\BBeta_r :=  \int_0^t (Z_r,Z'_r) d\BBeta_r :=  \lim_{|\Pi| \rightarrow 0} \sum_{[u,v] \in \Pi} ( Z_u \delta \eta_{u,v} + Z'_u \mn_{u,v}), \quad t\in[0,T].
\ee
Here, $\Pi$ is any partition of $[0,t]$ with mesh $|\Pi|$ and the limit is taken in the sense of convergence in probability. Moreover, for any $(s,t) \in \Delta,$ $q \le p $ and $q<\infty$,
\begin{align*} 
    &\| \int_s^t Z_r d\BBeta_r - Z_s \delta \eta_{s,t} \|_q \lesssim    (\vertiii{\BBeta}_\alpha \| \delta Z \|_{\beta,p} +\vertiii{\BBeta}_\alpha^2 \sup_{r\in [s,t]} \|Z'\|_p )|t-s|^{\alpha+\beta} \\  
    & \qquad \qquad \qquad \qquad \qquad \ \ \  +   ( \vertiii{\BBeta}_\alpha \|\EB R^Z\|_{\beta+\beta',p} + \vertiii{\BBeta}_\alpha^2 \| \EB \delta Z'\|_{\beta',p}) |t-s|^{\alpha+\beta+\beta'},\\  
   & \lVert \mathbb{E}_{s}  ( \int_{s}^{t} Z_{r} d\BBeta_r - Z_{s} \delta \eta_{s,t} - Z'_s \mn_{s,t}  ) \rVert_{q} 
    \lesssim   \left(  \vertiii{\BBeta}_{\alpha} \|\EB R^{Z} \|_{ {\beta}+ \beta', p} + \vertiii{\BBeta}_{\alpha}^2 \| \EB \delta Z' \|_{\beta',p} \right) |t-s|^{\alpha +   \beta + \beta'}.
\end{align*}
In particular, 
$$
    (X,X'):=(\int_0^. Z_r d \BBeta_r, Z) \in \D^{\alpha, \beta}_\eta L_q.
$$
\end{prop}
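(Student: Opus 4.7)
My plan is to construct the integral as the limit of compensated Riemann sums by applying L\^e's stochastic sewing lemma to the germ
\begin{equation*}
\Xi_{s,t} \defeq Z_s\,\delta \eta_{s,t} + Z'_s\,\eta^{(2)}_{s,t},\qquad (s,t)\in\Delta.
\end{equation*}
The key algebraic input is Chen's relation $\eta^{(2)}_{s,t}-\eta^{(2)}_{s,u}-\eta^{(2)}_{u,t}=\delta\eta_{s,u}\otimes \delta\eta_{u,t}$, which, after inserting $\delta Z_{s,u}=R^Z_{s,u}+Z'_s \delta\eta_{s,u}$ and collecting terms, gives the crucial identity
\begin{equation*}
\delta \Xi_{s,u,t} = -\,R^Z_{s,u}\,\delta \eta_{u,t} \;-\; \delta Z'_{s,u}\,\eta^{(2)}_{u,t}.
\end{equation*}

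Next I would verify the two quantitative hypotheses of the SSL. Since $\delta\eta_{u,t}$ and $\eta^{(2)}_{u,t}$ are deterministic and $s\le u$, I can pull them out of $\E_s$ and use the defining bounds of a stochastic controlled rough path to obtain
\begin{equation*}
\lVert \E_s\,\delta\Xi_{s,u,t}\rVert_p \lesssim \vertiii{\BBeta}_\alpha\,\lVert\E_\bullet R^Z\rVert_{\beta+\beta',p}\,|t-s|^{\alpha+\beta+\beta'} + \vertiii{\BBeta}_\alpha^{2}\,\lVert \delta Z'\rVert_{\beta',p}\,|t-s|^{2\alpha+\beta'},
\end{equation*}
both exponents exceeding $1$ by assumption. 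For the unconditional increment, writing $R^Z_{s,u}=\delta Z_{s,u}-Z'_s\delta\eta_{s,u}$ and using $\|\delta Z\|_{\beta,p}$ together with the H\"older bound on $Z'$ yields $\|R^Z_{s,u}\|_p\lesssim |u-s|^\beta$, whence
\begin{equation*}
\lVert \delta\Xi_{s,u,t}\rVert_p \lesssim |t-s|^{\alpha+\beta},
\end{equation*}
and $\alpha+\beta>\tfrac12$ by hypothesis.

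Given these two estimates, the stochastic sewing lemma produces a unique additive process $\mathcal{I}\Xi$ to which the compensated Riemann sums converge in probability, and setting $\int_s^t Z\, d\BBeta \defeq \mathcal{I}\Xi_{s,t}$ matches the stated definition. The two displayed bounds in the proposition are precisely the quantitative output of the SSL in this situation: the bound on $\E_s\delta\Xi$ controls the conditional error $\E_s\bigl(\int Z\,d\BBeta - Z_s\delta\eta - Z'_s\eta^{(2)}\bigr)$ with the prefactors $\vertiii{\BBeta}_\alpha$, $\vertiii{\BBeta}_\alpha^2$, while the bound on $\delta\Xi$ controls the unconditional $L^q$-error of $\int Z\,d\BBeta - Z_s\delta\eta$. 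For the final controlled-rough-path assertion $(X,X')=(\int_0^\cdot Z\,d\BBeta, Z)\in \D_\eta^{\alpha,\beta}L^q$, the first displayed estimate yields $\alpha$-H\"older regularity of $X$ in $L^q$, the regularity of $X'=Z$ is given by hypothesis, and the remainder $R^X_{s,t}=\delta X_{s,t}-Z_s\delta\eta_{s,t}$ satisfies the desired $(\alpha+\beta+\beta')$-H\"older bound in conditional $L^q$-norm from the second displayed estimate.

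The main obstacle is a careful algebraic manipulation to extract the decomposition of $\delta\Xi_{s,u,t}$ in terms of $R^Z$ and $\delta Z'$ using Chen's relation; once this is in place, the remainder is a bookkeeping exercise in matching exponents and applying the SSL at the target integrability index $q\le p$. A minor delicate point is the restriction $q<\infty$: convergence of the Riemann sums in $L^p$ for $p=\infty$ is not guaranteed, so one must work at each finite $q\le p$ and invoke the corresponding $L^q$-form of the SSL.
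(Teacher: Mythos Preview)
Your proposal is correct and is exactly the approach the paper takes: the paper's proof simply cites \cite[Theorem~3.5]{friz2021roughito} (the stochastic sewing argument applied to the germ $\Xi_{s,t}=Z_s\delta\eta_{s,t}+Z'_s\eta^{(2)}_{s,t}$) for $p<\infty$, and for $p=\infty$ notes that one works at any finite $q<p$---precisely as you outline. One small refinement: in the conditional estimate you may replace $\lVert\delta Z'\rVert_{\beta',p}$ by the (possibly smaller) $\lVert\E_\bullet\delta Z'\rVert_{\beta',p}$, matching the stated bound exactly.
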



The boundedness of a path $(Z,Z') \in \BD^{\alpha,\beta}_\eta L_\infty$ does not guarantee the boundedness of $(\int Z d\BBeta, Z)$. Boundedness is guaranteed if the rough path belongs to the space $\D^{\alpha,\beta}_\eta L_\infty$ as shown by the following lemma. 

\begin{lemma}\label{lem:gub-sew}
Suppose that $(Z,Z') \in  \mathscr{D}^{\beta,\beta'}_\eta L_p $ with $p \in [2,\infty],$ $\beta \in (0,\alpha]$, $\beta' \in (0,\beta],$ and $\alpha+\beta+ \beta'>1.$ Then for any $0 \le s \le t \le T,$ we have 
$$
\| \int_s^t Z_r d\BBeta_r - Z_s \eta_{s,t} - Z'_s \mn_{s,t}  \|_{\infty} \lesssim_{\alpha,\beta,\beta'} \vertiii{\BBeta}_{\alpha} ( \|\delta Z'\|_{\beta',\infty} + \|R^Z\|_{\beta+\beta', \infty} ) (t-s)^{\alpha+\beta+\beta'}.
$$
In particular, if $p=\infty,$ then $$\Big(\int_0^\cdot Z_r d \BBeta_r, Z\Big) \in \mathscr{D}^{\alpha,\beta}_\eta L_\infty.$$
\end{lemma}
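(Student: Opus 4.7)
The plan is to identify the rough stochastic integral with the deterministic (Gubinelli) sewing integral applied pathwise, then exploit that pathwise sewing yields the desired $L^\infty$ bound whenever the $L^\infty$-H\"older seminorms on $R^Z$ and $\delta Z'$ are finite. Concretely, I introduce the germ $A_{s,t} := Z_s\,\delta\eta_{s,t} + Z'_s\,\eta^{(2)}_{s,t}$ and compute its defect. Using $\delta\eta_{s,t} = \delta\eta_{s,u} + \delta\eta_{u,t}$, Chen's relation $\eta^{(2)}_{s,t} - \eta^{(2)}_{s,u} - \eta^{(2)}_{u,t} = \delta\eta_{s,u} \otimes \delta\eta_{u,t}$, and the controlled decomposition $\delta Z_{s,u} = Z'_s\,\delta\eta_{s,u} + R^Z_{s,u}$, the second-order cross-terms cancel and one is left with
\[
\delta A_{s,u,t} \;=\; -R^Z_{s,u}\,\delta\eta_{u,t} \;-\; \delta Z'_{s,u}\,\eta^{(2)}_{u,t}.
\]
Bounding $|\delta\eta_{u,t}| \le \vertiii{\bm\eta}_\alpha (t-u)^\alpha$ and $|\eta^{(2)}_{u,t}| \le \vertiii{\bm\eta}_\alpha^2 (t-u)^{2\alpha}$, and using $\alpha \ge \beta$ to trade the exponent $2\alpha+\beta'$ for $\alpha+\beta+\beta'$ at the cost of a $T$-dependent constant, I get the pathwise defect estimate
\[
\|\delta A_{s,u,t}\|_\infty \;\lesssim_{T,\alpha,\beta,\beta'}\; (1 \vee \vertiii{\bm\eta}_\alpha)\,\vertiii{\bm\eta}_\alpha\,\bigl(\|R^Z\|_{\beta+\beta',\infty} + \|\delta Z'\|_{\beta',\infty}\bigr)\,(t-s)^{\alpha+\beta+\beta'}.
\]

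Since $\alpha+\beta+\beta' > 1$, the deterministic sewing lemma applies pathwise on the almost-sure set where the $L^\infty$-H\"older seminorms are attained, producing a process $I$ with $|I_t - I_s - A_{s,t}|(\omega) \lesssim \|\delta A(\omega)\|_{\alpha+\beta+\beta'}(t-s)^{\alpha+\beta+\beta'}$. The step I expect to require the most care is identifying this $I$ with the rough stochastic integral from Proposition~\ref{prop:roughinteg}: both are constructed as limits of the same Riemann sums $\sum_{[u,v] \in \Pi}(Z_u\,\delta\eta_{u,v} + Z'_u\,\eta^{(2)}_{u,v})$, the pathwise one as an a.s.\ uniform limit and the stochastic one as a limit in probability; since a.s.\ convergence along a subsequence implies convergence in probability, the two limits agree $\mathbb{P}$-a.s. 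Taking $\esssup_\omega$ of the pathwise bound then yields the claimed $L^\infty$ estimate.

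For the ``in particular'' assertion with $p=\infty$, set $X_t := \int_0^t Z_r\,d\bm\eta_r$ and $X'_t := Z_t$. The remainder decomposes as
\[
R^X_{s,t} \;=\; \delta X_{s,t} - Z_s\,\delta\eta_{s,t} \;=\; Z'_s\,\eta^{(2)}_{s,t} \;+\; \Bigl(\int_s^t Z_r\,d\bm\eta_r - Z_s\,\delta\eta_{s,t} - Z'_s\,\eta^{(2)}_{s,t}\Bigr),
\]
whose first summand is $O((t-s)^{2\alpha})$ and whose second is $O((t-s)^{\alpha+\beta+\beta'})$ in $L^\infty$ (using $\|Z'\|_\infty < \infty$, which follows from $\|Z'_0\|_\infty + \|\delta Z'\|_{\beta',\infty}T^{\beta'} < \infty$, and the main estimate). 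Since $2\alpha \ge \alpha+\beta$ and $\alpha+\beta+\beta' \ge \alpha+\beta$, one obtains $R^X \in C^{\alpha+\beta}_2 L_\infty$; the bounds $\|\delta X\|_{\alpha,\infty}, \|\delta X'\|_{\beta,\infty} < \infty$ follow at once from $\delta X = X'\,\delta\eta + R^X$ and $\delta X' = \delta Z = Z'\,\delta\eta + R^Z$. This yields $(X,X') \in \mathscr{D}^{\alpha,\beta}_\eta L_\infty$.
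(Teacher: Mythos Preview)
Your proof is correct and follows essentially the same route as the paper: introduce the germ $A_{s,t} = Z_s\,\delta\eta_{s,t} + Z'_s\,\eta^{(2)}_{s,t}$, compute the defect $\delta A_{s,u,t} = -R^Z_{s,u}\,\delta\eta_{u,t} - \delta Z'_{s,u}\,\eta^{(2)}_{u,t}$, bound it in $L^\infty$, and apply Gubinelli's (deterministic) sewing lemma. You are in fact more careful than the paper on two points it leaves implicit --- the identification of the pathwise sewing limit with the rough stochastic integral of Proposition~\ref{prop:roughinteg}, and the verification that $(X,X') \in \mathscr{D}^{\alpha,\beta}_\eta L_\infty$ --- both of which you handle correctly.
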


\begin{proof}
Let $A_{s,t}:= Z_s \eta_{s,t} + Z'_s \mn_{s,t} $. Then $|\delta A_{s,u,t}|= |R^Z_{s,u} \delta \eta_{u,t} + (\delta Z'_{s,u}) \eta^{(2)}_{u,t}|$, and so 
$$
    \| \delta A_{s,u,t} \|_{L_p} \lesssim (\|R^Z\|_{p,\beta+\beta'}+ \|\delta Z'\|_{p, \beta'} ) (\vertiii{\BBeta}_{\alpha} \vee 1)\vertiii{\BBeta}_{\alpha} (t-s)^{\alpha +\beta+ \beta' }.
$$    
Thus, the assertion follows from Gubinelli's sewing lemma; see, e.g. \cite{Gub04} or \cite[Lemma 4.2]{frizroughpaths2021} for details.   
\end{proof}


The following rough It\^o formula is taken from \cite[Proposition 2.15]{BCN24}.

\begin{prop}\label{roughito}
Let $(\beta, \beta')$ be as in Proposition \ref{prop:roughinteg}, $X$ be a progressively measurable process, $\BBeta \in \mc^{0,\alpha}_g$ be a geometric rough path, and $(X',X'')$ be a $\eta$-stochastic controlled rough path such that a.s. 
$$
X_t = X_0 + \int_0^t b_s d s + \int_0^t \sigma_s d W_s + \int_0^t X'_s d \BBeta_s, \quad t \in [0,T].
$$
Suppose that $(X',X'')$ belongs to $\BD^{\beta,\beta'}_{\eta} L_p$ for any $p\ge 2$, and that $(b,\sigma)$ are progressively measurable processes that satisfy 
     \begin{equation} \label{eq:Lp-coefficients}
         \sup_{t\in [0,T]}\|b_t\|_{m}, 
         \sup_{t\in [0,T]}\|\sigma_t\|_{m} < \infty, \ \ \ \text{ for any } \ \  m \ge 2.
     \end{equation}
Let $f$ be a three times continuously differentiable function such that for $k=1,2,3$
\begin{equation*}
        |D^kf(x)| \leq C(1+|x|^q),
\end{equation*}
for some $q\geq1$ and $C>0$. Then, for every $t\in [0,T]$, $\mathbb{P}$-a.s.,
\begin{equation}\label{eq:roughito}
\begin{split}
            f(X_t) - f(X_0)
            = & \int_{0}^{t} Df(X_u) b_u d u
            + \int_{0}^{t} Df(X_u) \sigma_u d W_u
            + \int_{0}^{t} (Y,Y^{\prime})_u d \BBeta_u\\
            &\  + \frac{1}{2} \int_{s}^{t} \operatorname{trace}(\sigma_u \sigma^{\top}_u D^2f(X_u)) d u,
\end{split}
\end{equation}
where 
$
    (Y,Y^\prime) := (Df(X)X^{\prime}, D^2f(X)(X^{\prime})^2 + Df(X)X^{''}) \in \bigcap_{p \ge 2} \BD_\eta^{\beta, \beta' } L_p.
$
\end{prop}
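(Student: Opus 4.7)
I would first verify that the candidate pair $(Y, Y') = (Df(X)X', D^2f(X)(X')^2 + Df(X)X'')$ lies in $\bigcap_{p\ge 2} \BD_\eta^{\beta,\beta'} L_p$, so that $\int_0^t (Y, Y')_u d\BBeta_u$ is unambiguously defined via Proposition \ref{prop:roughinteg}. Using \eqref{eq:Lp-coefficients}, the BDG inequality, and the rough-integral estimates of Proposition \ref{prop:roughinteg} applied to $(X', X'')$, one gets $\sup_{t\le T} \|X_t\|_p < \infty$ and H\"older regularity of $X$ in $L^p$; the polynomial growth of $D^k f$ then transfers this to $Df(X), D^2 f(X) \in L^\infty_t L^p$. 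Applying Lemma \ref{productest} first to $Df(X) \cdot X''$ and to $D^2 f(X) \cdot (X')^2$, and then to the product $Df(X) \cdot X'$, produces the controlled-rough-path structure of $(Y, Y')$ with all the required norms finite.

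For the identity itself, fix a partition $\Pi$ of $[0,t]$ and write
\begin{equation*}
    f(X_t) - f(X_0) = \sum_{[u,v]\in\Pi} \Bigl[ Df(X_u) \delta X_{u,v} + \tfrac{1}{2} D^2 f(X_u) (\delta X_{u,v})^{\otimes 2} \Bigr] + \sum_{[u,v]\in\Pi} r^f_{u,v},
\end{equation*}
with $r^f_{u,v}$ a Taylor remainder of order $|\delta X_{u,v}|^3$. Substituting the rSDE for $\delta X_{u,v}$ and the Davie-type expansion $\int_u^v X'_r d\BBeta_r = X'_u \delta\eta_{u,v} + X''_u \mn_{u,v} + O_{L_p}(|v-u|^{\alpha+\beta+\beta'})$ from Proposition \ref{prop:roughinteg}, the sum decomposes into: (i) Riemann sums converging to $\int_0^t Df(X_u) b_u du$ and, via an It\^o-isometry argument, to $\int_0^t Df(X_u) \sigma_u dW_u$; (ii) the germ $Df(X_u) X'_u \delta\eta_{u,v} + [D^2 f(X_u)(X'_u)^2 + Df(X_u) X''_u] \mn_{u,v}$, which is exactly the Riemann--Davie germ defining $\int_0^t (Y,Y') d\BBeta$; and (iii) the Brownian quadratic sum $\tfrac{1}{2} \sum D^2 f(X_u)(\int_u^v \sigma_r dW_r)^{\otimes 2}$, converging to $\tfrac{1}{2} \int_0^t \operatorname{trace}(\sigma_u \sigma_u^\top D^2 f(X_u)) du$. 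The geometric hypothesis $\BBeta \in \mc^{0,\alpha}_g$ is used precisely to identify $\mathrm{Sym}(\mn_{u,v}) = \tfrac{1}{2}(\delta\eta_{u,v})^{\otimes 2}$, so that the would-be quadratic rough contribution $\tfrac{1}{2} D^2 f(X_u)(X'_u \delta\eta_{u,v})^{\otimes 2}$ gets absorbed into the $\mn$-coefficient of $(Y, Y')$ rather than producing a spurious new term.

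The main obstacle, and the reason stochastic rather than deterministic sewing is genuinely needed, lies in the mixed cross-terms appearing after expanding $(\delta X_{u,v})^{\otimes 2}$, in particular the Brownian--rough product $D^2 f(X_u) (\int_u^v \sigma_r dW_r) (X'_u \delta\eta_{u,v})$ and the Brownian--drift analogue. These are not deterministically small, but have zero $\mf_u$-conditional mean and $L^p$-norm of order $|v-u|^{1/2+\alpha}$, so L\^e's stochastic sewing lemma \cite{Le20} controls their summation by $O(|\Pi|^{\alpha})$, which vanishes since $\alpha > 1/3$. The purely rough remainders of order $|v-u|^{\alpha+\beta+\beta'} > |v-u|$ and the cubic Taylor remainders $r^f_{u,v}$ are treated by ordinary (Gubinelli) sewing. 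A standard localization at $\tau_n := \inf\{t : |X_t| \ge n\} \wedge T$, together with polynomial growth of $D^k f$ and the $L^p$-integrability of $\sup_{t \le T} |X_t|$, lifts the identity from $[0, \tau_n]$ to $[0, T]$ by dominated convergence, completing the proof.
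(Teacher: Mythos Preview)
The paper does not prove this proposition at all: it is stated with the preamble ``The following rough It\^o formula is taken from \cite[Proposition 2.15]{BCN24}'' and no proof is given. Your sketch is therefore not comparable to anything in the paper, but it is a sound outline of how such a rough It\^o formula is established in the rSDE setting, following the strategy of \cite{friz2021roughito,BCN24}: Taylor-expand along a partition, sort the pieces into Lebesgue, It\^o, rough, and quadratic-variation limits, and use stochastic sewing to kill the Brownian--rough cross terms whose $L^p$-size is only $|v-u|^{1/2+\alpha}$ but whose conditional expectation vanishes.

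One technical point deserves care. You invoke Lemma~\ref{productest} to show $(Y,Y')\in \BD_\eta^{\beta,\beta'}L_p$, but that lemma requires one factor to lie in $\md_\eta^{\beta,\beta'}L_\infty$, whereas $Df(X)$ and $D^2f(X)$ are only in $L^p$ for all finite $p$ under the polynomial-growth hypothesis. You cannot apply Lemma~\ref{productest} verbatim; instead you should either (a) run the product estimates directly via H\"older's inequality, exploiting that both factors are in $\BD_\eta^{\beta,\beta'}L_q$ for \emph{every} finite $q$ so that any desired $L^p$-bound on the product is obtained by choosing conjugate exponents, or (b) push the localization $\tau_n$ forward to the very beginning, so that on $[0,\tau_n]$ the process $X$ is bounded and hence $Df(X), D^2f(X)\in L^\infty$, making Lemma~\ref{productest} applicable, and only afterwards pass to the limit $n\to\infty$. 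Either route closes the gap; the rest of your argument is correct.
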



\section{Well-posedness of rough SDEs with affine drivers}

In this section, we establish the well-posedness of rough SDEs with affine rough drivers, as well as some key properties that we need to introduce rough and pathwise linear-quadratic control problems. Specifically, we consider the $\R^{d_X}$-valued rough SDE
\begin{equation} \label{eq:linear_forward_rsde}
    dX_t = b_t(X_t, \BBeta)dt + \sigma_t(X_t, \BBeta) dW_t + (F_t(\BBeta) X_t + f_t(\BBeta) ) d\bm{\eta}_t, \quad X_0 = \xi
\end{equation}
where $W$ is a $d_W$-dimensional Brownian motion, $\xi \in \MF_0$, and $\bm{\eta} = (\eta, \mn) \in \mc^\alpha([0,T],\R^d)$. The coefficient processes
$$
(b,\sigma) ( \BBeta):[0,T] \times \Omega \times \R^{d_X}   \rightarrow \R^{d_X} \times \R^{d_X \times d_W}
$$ 
are assumed to be progressively measurable, and the stochastic controlled rough paths $(F,F')(\BBeta)$ and $(f,f')(\BBeta)$ are assumed to belong to the spaces  $\BD_\eta^{\beta,\beta'}L_{\infty}$ and  $\BD_\eta^{\beta,\beta'}L_{m}$, respectively.

\begin{definition}\label{def:solution}
We call a continuous adapted process $X$ an $m$-integrable solution to the rough SDE \eqref{eq:linear_forward_rsde} in the space $\BD_{\eta}^{\beta,\beta'}L_m([0,T]; \R^{d_X})$, if the following holds:

\begin{itemize}
\item[$(a)$] The integral $\int_0^T |b_t(X_t,\BBeta)| dt + \int_0^T |\sigma_t \sigma^{\mathrm{T}}_t(X_t,\BBeta)|dt$ is finite a.s. 

\item[$(b)$] The process $\Big(F(\BBeta)X+f(\BBeta), F'(\BBeta)X+F(\BBeta)(F(\BBeta)X+f(\BBeta))+ f'(\BBeta)\Big)$ is a stochastic controlled rough path and belongs to $ \BD_\eta^{\beta,\beta'}L_m$ for some $m \ge 2, \beta \in (0,\alpha]$ and $\beta' \in (0,\beta],$ with 
$$
    \alpha+\beta>\frac12 \quad \mbox{and} \quad \alpha+\beta+\beta'>1.
$$
\item[$(c)$]  The following integral equation holds a.s.~for all $r \in [0,T]$:
$$
X_r= \xi + \int_0^r b_t(X_t,\BBeta)dt + \int_0^r \sigma_t(X_t,\BBeta) dW_t + \int_0^r (F_t(\BBeta) X_t + f_t(\BBeta)) d\bm{\eta}_t 
$$ 
\end{itemize}
\end{definition}

We refer to $(b,\sigma,F,F',f,f')$ as the {\sl coefficients} of the rSDE \eqref{eq:linear_forward_rsde}. To prove the existence and uniquness of solutions the dependence of the parameters on $\BBeta$ is not relevant. However, as we will see, defining and solving anticipative control problems requires $\BBeta$-dependent parameters. This is because the optimal control and hence the coefficients of the resulting rough SDE will depend on $\eta$; see Section \ref{sec:rSMP} for details.


\subsection{Existence and uniqueness  of solutions}

Rough SDEs with bounded parameters have been considered in \cite{friz2021roughito}. We establish the existence of a unique solution for unbounded coefficients under the following conditions. Since the rough path $\BBeta$ is fixed throughout this subsection, we omit it to simplify the notation. 

\begin{Assumption}\label{ass:m-linear-growth_lip} Assume that $m\in [2,\infty),$ $\beta \in  (\frac{1}{4}, \alpha]$, $\beta' \in (0, \beta]$, and $\xi \in L_m.$ Moreover, assume that:
\begin{itemize}
    \item[$(i)$] For $\psi \in \{b, \sigma\}$, there exists $K^\psi \in H^{m,n^{\psi}},$ with
    $$
    n^b \geq \frac{1}{1 - (  \alpha + \beta)} \vee m,\ \ \  n^\sigma \geq \frac{2}{1- 2\beta}  \vee m,
    $$ 
such that almost surely for any $(t,x, \bar x) \in [0,T] \times \R^{d_X} \times \R^{d_X} ,$
\begin{equation}\label{ine:Kg}
    \begin{split}
       \lvert \psi(  t, \ome, x) \rvert \lesssim  (\lvert K^{\psi}_t(\ome) \rvert + \lvert x \rvert) ; \ \ \    \lvert \psi(  t, \ome, x) - \psi(  t, \ome, \bar x)\rvert \lesssim  \lvert x - \bar x \rvert.
    \end{split}   
\end{equation}
\item[$(ii)$] The stochastic controlled rough path $(F, F')$ is an element of $  \md^{\beta,\beta'}_\eta L_{\infty}$, where the pair $(   \beta,\beta')$ satisfies 
$\bar \alpha  + \beta + \beta' > 1$ with $\bar \alpha:= (\frac12-\frac{1}{n^\sigma}) \wedge \alpha$. In particular, 
\begin{equation}\label{ine:KF}
  (\esssup_{t,\ome}|F_t|  ) \vee   \|(F,F')\|_{\eta, \beta,\beta',\infty}   \le K
\end{equation}
for some constant $K$. Moreover, the stochastic controlled rough path $(f, f')$ belongs to $ \md^{\beta,\beta' }_\eta L_{m}$. 

\end{itemize}
\end{Assumption}

\begin{rem} 
In part (i) of the above assumption, $(n^b,n^\sigma)$ depends on $(\beta, \beta')$ while in (ii) $(\beta,\beta')$ depends on $n^\sigma.$ If $(b,\sigma)$ is bounded, we can take $(n^b,n^\sigma)$ as any positive number. In this case, $\bar \alpha=\alpha$ and the constraint on $(\beta,\beta')$ in (ii) reads $\alpha+\beta+\beta'>1.$ If $(b,\sigma)$ is unbounded, we require $\bar \alpha \ge \beta.$ This holds if $2\beta + \beta'>1.$ 
 \end{rem}


In what follows we set 
$$
    \Theta:=(\alpha,\beta,\beta', n^b, n^{\sigma}, T).
$$

\begin{example}\label{exam:controlledrp}
Let $\bm{\eta} = (\eta, \mn) \in \mc^\alpha$ be a rough path. The following are two examples where our assumptions on the stochastic controlled rough paths $(F,F')$ and $(f,f')$ can easily be verified. 

\begin{itemize}

\item [$(1)$] Let $G:[0,T] \times \Omega \times \R^{d} \rightarrow \R^k$ be a function such that:
\begin{itemize}
    \item for any $x\in \R^d,$ the process $G(\cdot, x)$ is progressively measurable;
    \item for any $(t,\ome) \in [0,T] \times \Omega$, the function  $G(t,\ome, \cdot)$ belongs to $C^3_b (\R^{d}; \R^{k})$, and all derivatives are continuous in $(t,x)$ for any $\ome;$
    \item for any $(\ome,x)\in \Omega \times \R^d$, $ G(\cdot, \ome,x)$ belongs to $C^{\gamma}$ with $\gamma>1-\alpha,$ and $| G(\cdot,\ome,x) |_{\gamma}$ is uniformly bounded in $(\ome,x)$.
\end{itemize}
Then 
$
    (F,F')(t,\ome ):=(G(t,\ome,\eta_t), D_x G(t,\ome, \eta_t)) \in \md^{2 \beta }_\eta L_{\infty}
$ 
with $\beta := \alpha \wedge \frac{\gamma}{2}$. Indeed, 
\[
\begin{split}
    G(t,\ome,\eta_t)- G(s,\ome, \eta_s)= & \ D_x G(s,\ome, \eta_s) \delta \eta_{s,t}  + ( G(t,\ome, \eta_t) -  G(s,\ome, \eta_t) ) \\
    & + \int_0^1 \int_0^1  D_x^2G(t,\ome, \eta_s+ \lambda \mu \delta \eta_{s,t} ) d\lambda d\mu (\delta \eta_{s,t})(\delta \eta_{s,t})  
\end{split}
\]
and it is not difficult to check that $\EB R^F \in C_2^{2\alpha \wedge \gamma} ([0,T]; L_{\infty})$ where 
$$
    R^F_{s,t}:= ( G(t,\ome, \eta_t) -  G(s,\ome, \eta_t) )   + \int_0^1 \int_0^1  D_x^2G(t,\ome, \eta_s+ \lambda \mu \delta \eta_{s,t} ) d\lambda d\mu (\delta \eta_{s,t})(\delta \eta_{s,t})
$$ 
Similar examples can be constructed for $(f,f')$ under suitable integrability conditions on $D^i_x G$. 

\item [$(2)$] Let $H \in C^2_b(\R^d;\R)$ and 
$
    (f,f')(t,\ome):=(H(W_t)(\ome), 0). 
$    
It then follows from It\^o's formula that $(f,f') \in \md^{ 2 \alpha  }_\eta L_{p}$ for any $p \in [2,\infty).$ Indeed, $f,f' \in C^{\frac12}([0,T];L_p )$ and $\EB R^f \in C^{1}_2 L_p $.
\end{itemize}
\end{example}

To establish the well-posedness of our rSDE we require the following a priori estimates.

\begin{prop}\label{priorest-lrsde}
Suppose that Assumption \ref{ass:m-linear-growth_lip} holds and that $X$ is a solution in the sense of Definition \ref{def:solution}, to the rough SDE \eqref{eq:linear_forward_rsde} in the space $\BD_\eta^{\bar \beta, {\bar \beta}'}L_m$ for some $\bar \beta \in (0,\beta],$ ${\bar \beta}' \in (0,\beta']$. Then $X$ is a solution in the space $\md^{\bar \alpha,\beta}_\eta L_{m}$ with $\bar \alpha:= \alpha \wedge (\frac12- \frac{1}{n^\sigma}) \ge \beta$. Moreover, setting $X' := FX+f, $ it holds that 
\begin{equation}\label{est-sol}
    \|(X,X')\|_{ \bar \alpha, \beta,m} + \| X\|_{S^m } \le  2^{C_{\Theta} N}\left( \lVert \xi \rVert_m +  {\theta}({\frac1K}\wedge1) \right),
\end{equation}
where $C_{\Theta}$ is a constant depending only on $\Theta,$ $K$ is the bound introduced in Assumption \ref{ass:m-linear-growth_lip}, 
$$
    N:=\left((K^2 \vee 1)( \vertiii{\BBeta}_{\alpha} \vee 1) \right)^{\frac{1}{\beta'}}+1 \quad
    \mbox{and} 
    \quad \theta \defeq \lVert K^b \rVert_{H^{m, n^b}} + \lVert K^\sigma \rVert_{H^{m, n^\sigma}} + \|f_0\|_m + \|(f, f')\|_{\beta,\beta', m}.
$$    
\end{prop}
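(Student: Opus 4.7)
The proof follows the classical small-interval-plus-iteration scheme for rough SDEs, adapted to accommodate both the unbounded coefficients $(b,\sigma)$ and the linear-in-$X$ rough driver $FX+f$. The argument produces a self-referential inequality on short sub-intervals, closed by absorption, and then glued over $O(N)$ sub-intervals.

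\textbf{Step 1: Controlled structure of the rough integrand.} Setting $Z_t := F_t X_t + f_t$ and $Z'_t := F'_t X_t + F_t Z_t + f'_t$, Lemma \ref{productest} applied to $(F,F') \in \md^{\beta,\beta'}_\eta L_\infty$ and the hypothesized $(X,X') \in \BD_\eta^{\bar\beta,\bar\beta'} L_m$ shows $(Z,Z') \in \md_\eta^{\bar\beta,\bar\beta'} L_m$, with all its seminorms bounded linearly in $\|\delta X\|_{\bar\beta,m}$, $\|\delta X'\|_{\bar\beta',m}$, $\|\EB R^X\|_{\bar\beta+\bar\beta',m}$, up to a multiplicative factor $K$ and an additive data contribution from $\theta$.

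\textbf{Step 2: Local estimates on a sub-interval $I=[s,t]$ of length $h\le 1$.} I bound the three contributions to $\delta X_{s,t}$ separately. Hölder in time and linear growth give $\|\int_s^t b_r(X_r) dr\|_m \lesssim h^{1-1/n^b}(\|K^b\|_{H^{m,n^b},I} + \|X\|_{S^m,I})$, with $1-1/n^b \ge \alpha+\beta \ge \bar\alpha+\beta$ by the assumption on $n^b$; BDG plus Hölder give $\|\int_s^t \sigma_r(X_r) dW_r\|_m \lesssim h^{\bar\alpha}(\|K^\sigma\|_{H^{m,n^\sigma},I} + \|X\|_{S^m,I})$. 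Proposition \ref{prop:roughinteg} supplies both the $L_m$-Hölder and the conditional-expectation remainder bounds for $\int_s^t Z_r d\BBeta_r$, whose right-hand sides are re-expressed, via Step 1, in terms of $\|\delta X\|_{\bar\alpha,m,I}$, $\|\delta X'\|_{\beta, m, I}$, $\|\EB R^X\|_{\bar\alpha+\beta,m,I}$. Combining these with the identity $R^X_{s,t} = \delta X_{s,t} - Z_s \delta\eta_{s,t}$ yields, for
\[
G(I) := \|\delta X\|_{\bar\alpha,m,I} + \|\delta X'\|_{\beta,m,I} + \|\EB R^X\|_{\bar\alpha+\beta,m,I},
\]
an inequality of the form
\[
G(I) + \|X\|_{S^m,I} \le C_\Theta \, h^{\beta'} (K^2 \vee 1)(\vertiii{\BBeta}_\alpha \vee 1)\, G(I) + C_\Theta\bigl(\|X_s\|_m + \theta\bigr).
\]

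\textbf{Step 3: Absorption and gluing.} Choosing $h$ such that $C_\Theta h^{\beta'}(K^2 \vee 1)(\vertiii{\BBeta}_\alpha \vee 1) = \tfrac12$ partitions $[0,T]$ into at most $\lceil T/h \rceil \lesssim N$ intervals $I_1, \dots, I_N$. On each $I_k$ the self-referential term is absorbed, leaving $G(I_k) + \|X\|_{S^m,I_k} \le 2C_\Theta(\|X_{s_k}\|_m + \theta)$. Since $\|X_{s_{k+1}}\|_m \le \|X\|_{S^m,I_k}$, iterating yields $\|X_{s_k}\|_m \le (2C_\Theta)^k(\|\xi\|_m + \theta)$, so after $N$ steps the $2^{C_\Theta N}$ prefactor appears. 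Summing the local bounds over $k$ gives \eqref{est-sol}; the $(1/K \wedge 1)$ attenuation of $\theta$ is tracked by noting that $\|X'_0\|_m = \|F_0\xi + f_0\|_m$ and that the data enters the rough-integral estimates multiplied by $K$. The a posteriori regularity upgrade from $(\bar\beta,\bar\beta')$ to $(\bar\alpha,\beta)$ is automatic: the drift and Brownian pieces are $C^{\bar\alpha}$ in $L_m$, and Proposition \ref{prop:roughinteg} places the rough integral in $\D^{\alpha,\beta}_\eta L_m$, so $X \in \D^{\bar\alpha,\beta}_\eta L_m$ with $X'=Z$.

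\textbf{Main obstacle.} The delicate point is arranging Step 2 so that a genuine small power $h^{\beta'}$ survives in front of $G(I)$, enabling absorption. This hinges on the stochastic sewing estimate for the rough integral, whose conditional-expectation remainder has Hölder exponent $\alpha+\beta+\beta' > 1$; after normalizing by $h^{\bar\alpha+\beta}$ one is left with the spare $h^{\beta'}$. Carefully tracking the distinct roles of $\bar\alpha$ (from the Brownian integrability gap $\tfrac12 - \tfrac{1}{n^\sigma}$), $\beta$ (the input regularity of $F,F',f,f'$), and $\beta'$ (the absorption exponent), while keeping $K$ and $\vertiii{\BBeta}_\alpha$ explicit so as to recover the stated form of $N$, is the principal technical effort; the constraint $\bar\alpha+\beta+\beta' > 1$ is used precisely here.
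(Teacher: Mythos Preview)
Your proposal is correct and follows essentially the same approach as the paper: local estimates via H\"older, BDG, and the stochastic sewing bounds of Proposition~\ref{prop:roughinteg}, combined into a self-referential inequality that is absorbed on short intervals and then telescoped over $O(N)$ sub-intervals. The paper's organization differs only in minor respects: it tracks $\Gamma_{s,t}=\sup_{r\in[s,t]}\|X_r\|_m$, $\Xi_{s,t}=\|\delta X\|_{\bar\alpha,m,[s,t]}$ and $\Psi_{s,t}=\|\E_\bullet R^X\|_{\bar\alpha+\beta,m,[s,t]}$ separately (rather than bundling them into a single $G(I)$), and it postpones the $S^m$-bound to a final step where the stochastic sewing lemma is applied once more to the germ $\bar J_{s,t}=Z_s\delta\eta_{s,t}+Z'_s\eta^{(2)}_{s,t}$ after the controlled-path norms are already in hand, whereas you fold $\|X\|_{S^m,I}$ into the main iteration.
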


\begin{proof}
Let $ (Z , Z') \defeq  (FX+f, (FX)'+f') $ and $  
    J_{s , t } := \int_s^t Z_r d \bm \eta_r - Z_s \delta \eta_{s, t} - Z'_s  \eta^{(2)}_{s, t}.$ Moreover, let $I_{s,t}:= \int_s^t Z_r d \BBeta_r$ and $R^I_{s,t}:= I_{s,t}- Z_s \delta \eta_{s,t}.$   
In terms of this notation,
	\be\label{eq:xN_bound}
        {\delta X}_{s, t} =  J_{s, t} + \int_{s }^{t } b_r(X_r)dr +  \int_{s }^{t } \sigma_r(X_r) d W_r + (Z_s \delta {\eta}_{s, t} + Z'_s \eta_{s, t}^{(2)}  ).
\ee
Moreover, we define $M:=\vertiii{\BBeta}_{\alpha,[0,T]}$, $\Gamma_{s, t} \defeq \sup_{r \in [s, t]} \lVert X_r \rVert_m$ and\footnote{By the following steps it is easy to check that $(\Xi_{s, t},\Psi_{s, t})$ are finite. Indeed, the following steps hold with $(\bar \alpha, \beta)$ replaced by $(\bar \beta, {\bar \beta}')$. Then one sees the finiteness by {\bf Step 1}.}
\begin{align*}
    (\Xi_{s, t},\Psi_{s, t})  & \defeq ( \lVert \delta X \rVert_{\balpha, m; [s, t]} ,   \lVert \E_\bullet R^{X} \rVert_{\balpha + \beta , m; [s, t]}).
\end{align*}

We establish the global estimate \eqref{est-sol} in two steps. First, we establish a bound on $\|(X,X')\|_{\bar{\alpha}, \beta, m}$ in terms of the quantities $(J, \theta, \Gamma, \Xi, \Psi)$. 
Proposition \ref{prop:roughinteg} yields a local-in-time bound on $J$ that depends on 
$(\Gamma, \Xi, \Psi)$ and $\theta$. Substituting this back gives 
a local bound for $\|(X,X')\|_{\bar{\alpha}, \beta, m}$ in terms of $(\theta, \xi)$, which is then 
extended globally via a telescoping argument. Second, $\|X\|_{S^m}$ is controlled using the 
Stochastic Sewing Lemma \cite[Theorem 2.8]{friz2021roughito}.

    
{\bf Step 1.} {\sl Bounding $\|  (X,X') \|_{\balpha,\beta, m}$ in terms of $(J,\theta, \Gamma,\Xi, \Psi)$.} 
For any $[s, t] \in \Delta_{[0, T]}$, it holds that
    \begin{equation}
    \begin{split}
          \lVert \int_s^t b_r(X_r) dr   \rVert_m &\leq \lVert \int_s^t b_r(X_r)dr -  \int_s^t b_r(0)dr \rVert_m + \lVert   \int_s^t b_r(0)dr \rVert_m \\ 
        &\lesssim_T  (\int_s^t  (\sup_{u \in [s, t]} \left \lVert X_u \right \rVert_m^m  )dr  )^\frac{1}{m} (t-s)^{1 - \frac{1}{m}}  +  \E \left[ \int_s^t \lvert K^b_r \rvert^m dr \right]^{\frac{1}{m}} (t - s)^{1 - \frac{1}{m}} \\ \label{bound-b}
        &\leq \Gamma_{s, t}(t - s) +   \lVert K^b  \rVert_{H^{m, n^b}} (t - s)^{1 - \frac{1}{n^b}}
    \end{split}
    \end{equation}
    where the last inequality uses H\"older's inequality and Assumption \ref{ass:m-linear-growth_lip}.
  Similarly, using the Burkholder-Davis-Gundy inequality, we see that
    \begin{equation}\label{eq:sigma_bound}
         \lVert \int_s^t \sigma_r(X_r) dW_r   \rVert_m \lesssim_T \Gamma_{s, t}(t - s)^\frac{1}{2} + \left \lVert K^\sigma \right \rVert_{H^{m, n^\sigma}}(t - s)^{\frac{1}{2} - \frac{1}{n^\sigma}}.
    \end{equation}
    Furthermore,
    \begin{equation*}
        \left \lVert Z_t \right \rVert_m  \leq \lVert F_t X_t \rVert_m + \lVert f_t \rVert_m \leq K \left \lVert X_t \right \rVert_m + \theta
    \end{equation*}
    and similarly,
    \begin{align}\label{eq:Z'_bound}
    \begin{split}
        \lVert Z'_t   \rVert_m  \leq \lVert F_t (F_t X_t + f_t) \rVert_m + \lVert F'_t X_t + f'_t \rVert_m \leq  K(K+1) \lVert X_t   \rVert_m +  (K + 1) \theta.
    \end{split}
    \end{align}
As a result, for some implicit constant that depends on $\Theta$, 
    \begin{equation*}
         \lVert Z_s \delta \eta_{s, t} + Z'_s \mn_{s, t}   \rVert_m \lesssim M (1+K) (K \Gamma_{s,t} + \theta)(t - s)^\alpha.
    \end{equation*}

In view of \eqref{eq:xN_bound} and the preceding estimates, we conclude that
    \begin{equation}\label{eq:delta_x_bound}
        \left \lVert \delta X_{s, t} \right \rVert_m \leq \left \lVert   J_{s, t} \right \rVert_m + MK(1+K) \Gamma_{s,t}(t - s)^\alpha + (M(1+K) \vee 1)\theta(t - s)^{\balpha}.
    \end{equation}

Since $X'=Z,$ we need to estimate $\|Z\|_{\beta,[s,t]}.$ Since 
\begin{equation}\label{deltaZ}
        \lvert \delta Z_{s, t} \rvert  \le \lvert   F_t \delta X_{s,t} \rvert + \lvert  \delta F_{s,t} X_s \rvert +|\delta f_{s,t}| \leq  K  |\delta X_{s,t}| + K (t-s)^\beta |X_s| + | \delta f_{s,t}|,  
\end{equation}
we see that 
\begin{equation}\label{eq:deltaZbound}
        \lVert  \delta Z \rVert_{\beta; m; [s, t]} \lesssim   K\Xi_{s,t} + K \Gamma_{s,t} + \theta.
\end{equation}

Finally,
    \begin{equation*}
    	\E_s R^{ X}_{s, t} = \E_s \left(\delta X_{s, t} - Z_s \delta \eta_{s, t} \right) = \E_s  \left[  J_{s, t} + \int_{s }^{t } b_r(X_r) dr   +  Z'_s \mn_{s, t}    \right]
    \end{equation*}
and with $\epsilon' \defeq \min\{1 - \frac{1}{n^b}, 2\alpha  \}$ it follows by \eqref{bound-b} and \eqref{eq:Z'_bound} that
\begin{equation}\label{eq:RXN}
    	\lVert \E_s R^{ X}_{s, t} \rVert_m \lesssim \lVert \E_s   J_{s, t} \rVert_m + (1\vee M)(K+1)  K  \Gamma_{s, t} (t - s)^{2\alpha  } + (1\vee M)(K+1)   \theta (t - s)^{\epsilon'}.  
\end{equation}



{\bf Step 2.} 
{\sl Estimate $\|(Z,Z')\|_{\beta,\beta'}$.} The following bounds will be used below. 
By Lemma \ref{productest},
\begin{align*}
		\|\delta Z'\|_{\beta',m,[s,t]} \le \| \delta (FX)'\|_{\beta',m,[s,t]} +\| \delta f'\|_{\beta',m,[s,t]} \lesssim K( \| \delta X\|_{\beta,m,[s,t]} + \| \delta X'\|_{\beta',m,[s,t]} + \Gamma_{s,t}) + \theta
\end{align*}
and so it follows from \eqref{eq:deltaZbound} that 
\begin{equation}\label{est-z'}
    \| \delta Z'\|_{\beta',m,[s,t]} \lesssim (1+K) ( K \Xi_{s,t} + K \Gamma_{s,t} + \theta ).
\end{equation}
Since $R^{Z}= R^{FX} + R^{f},$ applying Lemma \ref{productest} again, yields that
\begin{equation}\label{est-rz}
\begin{split}
&\|R^Z\|_{\beta,m,[s,t]} \lesssim (1 \vee M) (1 \vee K)(K\Xi_{s,t}+ K\Gamma_{s,t}  
+  \theta),\\
& \|\E_{\bullet}R^Z\|_{\beta+\beta',m,[s,t]}    \lesssim K( \Xi_{s,t} + \Psi_{s,t} + (1+K) \Gamma_{s,t}  )+ (1\vee K)\theta.    
\end{split}
\end{equation}

    
{\bf Step 3.} {\sl Bounding $J_{s, t}$ - and thus $\|(X,X')\|_{\balpha, \beta,m,[s,t]}$ - 
in terms of $(\theta, \Gamma,\Xi, \Psi).$} By \cite[Theorem 3.7]{friz2021roughito} (or Proposition \ref{prop:roughinteg} above), 
    \begin{equation}
    \begin{split}\label{eq:Jbound}
          	\lVert   J_{s, t} \rVert_m \lesssim  & \  
        \Big(M \lVert \delta Z \rVert_{\beta, m, [s, t]} + M^2 \sup_{r \in [s, t]} \lVert Z'_r \rVert_m \Big) \lvert t - s \rvert^{\alpha + \beta} \\
           & + \Big( M \lVert \E_\cdot R^{  Z} \rVert_{\beta + \beta', m, [s, t]} + M^2 \lVert \delta Z' \rVert_{\beta', m, [s, t]}  \Big) (t - s)^{\alpha + \beta + \beta'}. 
    \end{split}
    \end{equation}
In view Step 2, 
	\begin{equation*}
		\lVert   J_{s, t} \rVert_m \lesssim {K_1}   \left(M\theta  + M K\Gamma_{s, t}  + \Xi_{s, t} \right) \lvert t - s \rvert^{\alpha + \beta} + {K_1} \left(M \theta + M K \Gamma_{s, t} + MK \Xi_{s, t} +  \Psi_{s, t} \right) (t - s)^{ \alpha + \beta + \beta'}
	\end{equation*}
 where $K_1:= (1\vee K)(1 \vee M).$ Substituting the above into \eqref{eq:delta_x_bound}, and assuming $|t-s|^{\beta'}K_1 <1$, we have
	\begin{equation}\label{eq:deltaxgammabound}
		\lVert \delta X_{s, t} \rVert_m \lesssim K_1 \theta (t - s)^{\balpha} + K_1 K \Gamma_{s, t}(t - s)^{\alpha} + K_1  \Xi_{s, t} (t - s)^{\alpha + \beta} + K_1  \Psi_{s, t}(t - s)^{\alpha + \beta + \beta'}.
	\end{equation}		
Using again Theorem \cite[Theorem 3.7]{friz2021roughito} yields that
	\begin{equation*}
 \begin{split}
		\lVert \E_s   J_{s, t} \rVert_m \lesssim & \  (M \lVert \E_\bullet R^{  Z} \rVert_{\beta + \beta', m, [s, t]} + M^2 \lVert \delta Z' \rVert_{\beta', m, [s, t]}) ( t - s )^{\alpha + \beta + \beta'} \\
 \lesssim & \ K_1  (M\theta + MK \Gamma_{s, t} + MK \Xi_{s, t} +  \Psi_{s, t} ) ( t - s )^{\alpha + \beta + \beta'}
 \end{split}
	\end{equation*}
 where we applied \eqref{est-rz} and \eqref{est-z'} in the last inequality. In view of \eqref{eq:RXN} this shows that
	\begin{equation}\label{eq:Rgammabound}
		\lVert \E_s R^{ X}_{s, t} \rVert_m \lesssim K_1 \theta (t - s)^{\alpha+\beta} + K_1 K \Gamma_{s, t}(t - s)^{2\alpha}  + K_1  (MK \Xi_{s, t} + \Psi_{s, t}) (t - s)^{\alpha + \beta + \beta'}
	\end{equation}

 
{\bf Step 4.} {\sl Bounding $\Gamma$, $\Xi$ and $\Psi$ in terms of $(\theta,\xi)$.} Using that $\alpha \ge \balpha $ and $|t-s|^{\beta'}K_1 < 1$, taking the $(\balpha + \beta)$-H\"older norm on both sides of the inequality \eqref{eq:Rgammabound} we obtain
	\begin{equation}\label{psibd}
		\Psi_{s, t} \le C   ( K_1  \theta + K_1  K \Gamma_{s, t} + K_1^2    |t-s|^{\beta'} \Xi_{s, t} + K_1 (t-s)^{\beta'}  \Psi_{s,t} ).
	\end{equation}
 where $C $ is a constant depending only on $\Theta$ that may change from line to line. Then, for any $\delta_1>0$ that satisfies $C  K_1  \delta_1^{\beta'} \le \frac12$  and any $t-s< \delta_1,$ we have 
 \be\label{psi-new}
 \Psi_{s, t} \le   C  K_1(\theta + K \Gamma_{s, t} ) + K_1 \Xi_{s, t}
 \ee
Substituting this into equation \eqref{eq:deltaxgammabound}, and we obtain for any $t-s< \delta_1$ that
	\begin{equation*}
		\lVert \delta X_{s, t} \rVert_m \lesssim K_1 \theta(t - s)^{\balpha} + K_1 K \Gamma_{s, t}(t - s)^{\alpha} + K_1  \Xi_{s, t} (t - s)^{\alpha + \beta}.
	\end{equation*}
    
According to the above estimate, for any $t-s< \delta_2$ with any $\delta_2>0$ satisfying $C  K_1  \delta_2^{\beta}\le  \frac12$, we have  
	\begin{equation}\label{xibd}
		\Xi_{s, t} \le C  {K_1} (\theta + K \Gamma_{s, t}).
	\end{equation}

We now substitute \eqref{psi-new} and \eqref{xibd} into \eqref{eq:deltaxgammabound}, to obtain, for any $t-s< \delta_3$ with $C  K_1  \delta_3^{\beta}\le  \frac12$,
	\begin{equation}\label{eq:deltaxfinalbound}
	 	\lVert \delta X_{s, t} \rVert_m \le C  K_1 (\theta + K \Gamma_{s, t})(t - s)^{\balpha}.
	\end{equation}
 
Next, we choose any $\delta_4 >0$ such that $C  K_1 K \delta_{4}^{\balpha} \le \frac12,$ and a partition  $\mpe = \{0 = t_0 < t_1 < \dots < t_N = T\}$ with mesh $|\mpe| < \delta_0:= \mathrm{min}_i \{\delta_i\}_{i=1,...,4} \wedge 1,$ and $N\lesssim_{\Theta} (K_1 \vee K K_1)^{\frac{1}{\beta'}}+1$. It then follows from \eqref{eq:deltaxfinalbound} that 
 \begin{equation*}
		\Gamma_{t_i, t_{i+1}} \le   \lVert X_{t_i} \rVert_m + C  K_1 \theta \delta_0^{\balpha} + \frac12 \Gamma_{t_i, t_{i+1}},
	\end{equation*}
 for any $i=0,...,N-1$ from which we conclude that 
 \begin{equation*}
     \Gamma_{t_i, t_{i+1}} \le  2 \rVert X_{t_i} \rVert_m + \theta  K'. 
 \end{equation*}
 with $K':=\frac{1}{K} \wedge 1.$
 Since $\Gamma_{t_0, t_{1}} \le 2 \lVert \xi \lVert_m + {\theta}{K'}$ and $\lVert X_{t_i} \rVert_m \leq \Gamma_{t_{i-1}, t_i}$ a telescopic argument yields
\begin{equation*}
    \Gamma_{0,T} = \sup_i \Gamma_{t_i,t_{i+1}} \le 2^N (\lVert \xi \rVert_m + {\theta}{K'} ).
\end{equation*}

In view of the above bound and  \eqref{eq:deltaxfinalbound}, we have for any $s,t$ with $|t-s|<\delta_0,$
\begin{equation}\label{fbd-xi}
\Xi_{s,t } \lesssim 2^{CN} (\lVert \xi \rVert_m + {\theta}{K'} ).
\end{equation}
 Noting that $\|\delta X\|_{\beta, m,[0,T]} \le  \sum_i \|\delta X\|_{\beta, m,[t_i,t_{i+1}]}$, we obtain 
\begin{equation}\label{fbd-x}
 \Xi_{[0,T]} =  \| \delta X\|_{\beta, m,[0,T]} \lesssim 2^{CN}( \lVert \xi \rVert_m + {\theta}{K'}).
\end{equation}

By \eqref{psi-new} a similar argument yields that  
$$
\Psi_{t_i,t_{i+1}} \lesssim 2^{CN} ( \lVert \xi \rVert_m + {\theta}{K'} )
$$

Likewise, in view of \eqref{eq:deltaZbound} and \eqref{fbd-x}, we obtain that
\begin{equation}\label{fbd-z}
\| \delta X'\|_{\beta,m,[0,T]} =\| \delta Z\|_{\beta,m,[0,T]}  \lesssim 2^{CN} ( \lVert \xi \rVert_m + {\theta}{K'} ).   
\end{equation}

Finally, for any $s \in [t_{k}, t_{k +1} ],t \in [t_{\ell}, t_{\ell +1} ]$ for some $0\le k \le \ell \le N-1,$ 
\begin{equation*}
     R^X_{s,t}  = X_{s,t}- X'_s \eta_{s,t}= R^X_{s,t_{k}} + \sum_{j=0}^{L-1} R^X_{t_{j},t_{j+1}} + R^X_{t_\ell, t} + X'_{s,t_\ell} \delta \eta_{t_\ell, t}
\end{equation*}
and so it follows from \eqref{fbd-z} that
\begin{equation}\label{fbd-rx}
 \Psi_{0,T} =  \|\E. R^X \|_{\balpha+\beta,m,[0,T]} \lesssim 2^{CN} (  \lVert \xi \rVert_m + {\theta}{K'} ).
\end{equation}
    

 
{\bf Step 5.} {\sl Bounding $\lVert X \rVert_{S^m}$ by the Stochastic Sewing Lemma.} Let $\bar J_{s,t}:=Z_s \delta \eta_{s,t} + Z'_s \eta^{(2)}_{s,t}.$ Then  
		\begin{align*}
        \delta  \bar J_{s, u, t} &:=  \bar J_{s,t} -   \bar J_{s, u} -  \bar J_{u, t} 
        = -R^{Z}_{s, u} \delta \eta_{u, t} - \delta Z'_{s, u} \mn_{u, t}. 
    \end{align*}
In view of \eqref{est-z'}, \eqref{est-rz}, \eqref{fbd-x}, \eqref{fbd-z} and \eqref{fbd-rx},
we have 
\begin{equation}
    \| \delta Z'\|_{\beta',m} + \| R^Z\|_{\beta, m} +\|\E. R^Z\|_{\beta+\beta',m}  \lesssim 2^{CN} ( \lVert \xi \rVert_m + {\theta}{K'}) . 
\end{equation}
Thus, for any $0\le s \le u \le t \le T,$
\begin{equation*}
    	\lVert \sup_{\tau \in [u, t]} \delta  \bar J_{s, u, \tau} \rVert_m \lesssim 2^{CN} (\lVert \xi \rVert_m + {\theta}{K'})(t - s)^{ \alpha + \beta}
    \end{equation*}
    \begin{equation*}
    	\lVert \E_s \delta  \bar J_{s, u, t} \rVert_m \lesssim 2^{CN} (\lVert \xi \rVert_m + {\theta}{K'} )(t - s)^{ \alpha + \beta +\beta'}
    \end{equation*}

    Thus, it follows from the Stochastic Sewing Lemma \cite[Theorem 2.8]{friz2021roughito} that 
	\begin{equation*}
		  \lVert \sup_{t \in [0, T]} | \int_0^. (FX+f)_r d \bm \eta_r - Z_0 \delta \eta_{0,t} - Z'_0 \eta_{0,t}^{(2)}  |   \rVert_m \lesssim 2^{CN} ( \lVert \xi \rVert_m + {\theta}{K'}), 
	\end{equation*}
 and thus 
 \begin{equation*}
     \| \int_0^. (FX+f)_r d \bm \eta_r \|_{S^m,[0,T]} \lesssim 2^{CN} ( \lVert \xi \rVert_m + {\theta}{K'}).
 \end{equation*}

On the other hand, the Burkholder-Davis-Gundy inequality yields
	\begin{equation*}
        \lVert \sup_{t \in [0, T]}\int_0^t b_r(X_r) dr   \rVert_m +   \lVert \sup_{t \in [0, T]} \int_0^t \sigma_r(X_r) dW_r   \rVert_m \lesssim  2^{CN} ( \lVert \xi \rVert_m + {\theta}{K'}).
    \end{equation*}
It follows that
    \begin{equation*}
    	 \lVert \sup_{t \in [0, T]} \lvert X_t \rvert  \Vert_m \lesssim  2^{CN} ( \lVert \xi \rVert_m + {\theta}{K'}).
    \end{equation*}
        
\end{proof}

\begin{rem}
The a priori estimate above highlights a key distinction from \cite{friz2021roughito}. In that work, the authors require the stronger mixed norm
\[
\big\| \, \| \cdot \mid \mathcal{F} \|_{m} \, \big\|_{n}
:= \Big\| \mathbb{E}\!\left[ |\cdot|^{m} \mid \mathcal{F} \right]^{1/m} \Big\|_{n},
\]
which is vital to control the nonlinear rough coefficient. In contrast, because the rough coefficient in our setting is affine, the natural $\|\cdot\|_{m}$ norm suffices.

\end{rem}

Armed with the above estimates, we are now ready to establish the well-posedness of the rough SDE \eqref{eq:linear_forward_rsde}.

\begin{thm}\label{thm:linear_sde_well_posed}
Let $\bm \eta \in \sroughnogeo$ and let $(b, \sigma, F, F', f, f')$ satisfy Assumption \ref{ass:m-linear-growth_lip}. Then there exists a unique solution $X $ to the rough SDE \eqref{eq:linear_forward_rsde} with 
$$
    (X,FX+f) \in \md^{\bar \alpha, \beta}_{ \eta} L_{m},  \quad \bar \alpha := \alpha \wedge (\frac12 - \frac{1}{n^\sigma}).
$$
\end{thm}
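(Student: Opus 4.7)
The plan is to obtain uniqueness directly from the a priori estimate of Proposition \ref{priorest-lrsde} and to obtain existence by a Picard iteration whose contractivity is established through the same partitioning mechanism used in that proof.

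For uniqueness, I would take two $m$-integrable solutions $X, \bar X$ in $\md^{\bar\alpha,\beta}_\eta L_m$ and set $Y := X - \bar X$. Then $Y$ solves a rough SDE of the form \eqref{eq:linear_forward_rsde} with $Y_0 = 0$, unchanged affine driver $(F, F')$, vanishing inhomogeneity $\tilde f \equiv 0$, $\tilde f' \equiv 0$, and the modified coefficients $\tilde b_t(y) := b_t(y + \bar X_t) - b_t(\bar X_t)$, $\tilde \sigma_t(y) := \sigma_t(y + \bar X_t) - \sigma_t(\bar X_t)$. Both are Lipschitz in $y$ with $\tilde b_t(0) = \tilde \sigma_t(0) = 0$, so Assumption \ref{ass:m-linear-growth_lip} is satisfied with $K^{\tilde b} = K^{\tilde \sigma} = 0$. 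Consequently the quantity $\theta$ appearing in Proposition \ref{priorest-lrsde} vanishes, and its conclusion forces $\|Y\|_{S^m} = 0$, so $X = \bar X$.

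For existence, I would define $X^0_t := \xi$ and, for $n \ge 0$,
\begin{equation*}
X^{n+1}_t := \xi + \int_0^t b_s(X^n_s)\,ds + \int_0^t \sigma_s(X^n_s)\,dW_s + \int_0^t (F_s X^n_s + f_s)\,d\bm\eta_s, \qquad t \in [0, T].
\end{equation*}
An induction based on Lemma \ref{productest} ensures that the integrand $F X^n + f$ is a stochastic controlled rough path in $\BD^{\beta,\beta'}_\eta L_m$, hence the rough integral is well defined by Proposition \ref{prop:roughinteg} and $(X^{n+1}, F X^n + f) \in \md^{\bar\alpha,\beta}_\eta L_m$. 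Setting $Y^n := X^{n+1} - X^n$, one has
\begin{equation*}
Y^n_t = \int_0^t [b_s(X^n_s) - b_s(X^{n-1}_s)]\,ds + \int_0^t [\sigma_s(X^n_s) - \sigma_s(X^{n-1}_s)]\,dW_s + \int_0^t F_s Y^{n-1}_s\,d\bm\eta_s,
\end{equation*}
whose drift and diffusion are Lipschitz contributions of $Y^{n-1}$ (with vanishing $K^b, K^\sigma$) and whose rough contribution is $F Y^{n-1}$. Re-running the sewing-based bounds of Proposition \ref{priorest-lrsde}, with the role of $\theta$ now played by $\|Y^{n-1}\|_{S^m,[0,\delta]} + \|\delta Y^{n-1}\|_{\bar\alpha,m,[0,\delta]}$, produces the contraction
\begin{equation*}
\|Y^n\|_{S^m,[0,\delta]} + \|\delta Y^n\|_{\bar\alpha,m,[0,\delta]} \le \tfrac{1}{2}\,\bigl(\|Y^{n-1}\|_{S^m,[0,\delta]} + \|\delta Y^{n-1}\|_{\bar\alpha,m,[0,\delta]}\bigr)
\end{equation*}
as soon as $\delta$ is chosen below the threshold identified in Step 4 of that proof, a quantity depending only on $\Theta$, $K$ and $\vertiii{\BBeta}_\alpha$. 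Thus $(X^n)$ is Cauchy in $\md^{\bar\alpha,\beta}_\eta L_m([0,\delta])$; its limit $X$ satisfies the integral equation on $[0,\delta]$ by continuity of each of the three integrals in their arguments (Lipschitz estimates for drift and diffusion, Lemma \ref{productest} and Proposition \ref{prop:roughinteg} for the rough integral). Patching across a finite partition $0 = t_0 < \cdots < t_N = T$ of the same mesh $\delta_0$ as in Step 4 of Proposition \ref{priorest-lrsde}, using $X_{t_i}$ as the new initial condition on each $[t_i, t_{i+1}]$, then produces a global solution in $\md^{\bar\alpha,\beta}_\eta L_m$.

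The main obstacle is to verify the displayed contraction inequality in the presence of a genuinely controlled (rather than bounded) rough driver acting on iterates built from unbounded drift and diffusion coefficients. This requires absorbing, successively, the rough-integral contributions to $\|\delta Y^n\|_{\bar\alpha,m,[0,\delta]}$ and $\|\E_\bullet R^{Y^n}\|_{\bar\alpha+\beta,m,[0,\delta]}$ using the smallness factors $\delta^\beta$ and $\delta^{\beta'}$ produced by the stochastic sewing lemma, exactly as in the passage from \eqref{psibd} through \eqref{xibd} to \eqref{eq:deltaxfinalbound} in the proof of Proposition \ref{priorest-lrsde}.
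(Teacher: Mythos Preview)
Your overall strategy coincides with the paper's: uniqueness from the a priori bound, existence via a contraction/Picard scheme on small subintervals glued along a partition. The uniqueness argument through Proposition~\ref{priorest-lrsde} is correct and in fact slightly cleaner than what the paper does (there uniqueness is a byproduct of the contraction). For existence, however, two points need tightening before the argument closes.

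First, the norm in your displayed contraction inequality is too small. The quantities $\|Y^n\|_{S^m,[0,\delta]} + \|\delta Y^n\|_{\bar\alpha,m,[0,\delta]}$ do \emph{not} control the rough integral of $F Y^n$ at the next step; for that you also need $\|\E_\bullet R^{Y^n}\|_{\cdot}$ and $\|\delta (Y^n)'\|_{\cdot}$. Correspondingly, the quantity playing the role of $\theta$ on the right cannot be just the $S^m$- and H\"older-size of $Y^{n-1}$: since the rough integrand in the equation for $Y^n$ is $F_s Y^{n-1}_s$, the relevant $\theta$ is $\|(FY^{n-1},(FY^{n-1})')\|_{\beta,\beta',m}$, i.e.\ (via Lemma~\ref{productest}) the full controlled rough path norm of $Y^{n-1}$. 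You acknowledge this in your final paragraph, but as written the iteration map does not send your chosen norm back into itself, so the contraction inequality does not close. The paper sets the fixed point up directly on pairs $(y,y')$ in $\md^{\beta,\beta'}_\eta L_m$ with the full seminorm $\|(\cdot,\cdot')\|_{\beta,\beta',m}$, which avoids this issue.

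Second, the paper deliberately runs the contraction in $\md^{\beta,\beta'}_\eta L_m$ with $\beta'<\beta$ strictly, and only upgrades to $\md^{\bar\alpha,\beta}_\eta L_m$ afterwards via Proposition~\ref{priorest-lrsde}. The strict gap $\beta-\beta'>0$ is exactly what produces the smallness factor $\tau^{\beta-\beta'}$ in the Gubinelli-derivative component of the norm (see~\eqref{Deltay'}); if you work directly at the target regularity $(\bar\alpha,\beta)$, where possibly $\bar\alpha=\beta$, you lose that factor and the $Y'$-part of the map need not contract. So either introduce an auxiliary $\beta'<\beta$ for the fixed point and upgrade at the end, or identify an alternative source of smallness for $\|\delta(Y^n)'\|_{\beta,m}$.
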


\begin{proof}

In view of Proposition \ref{priorest-lrsde}, we only need to prove the global well-posedness on the larger space $\md^{\beta, \beta'}_{  \eta} L_{m }.$ Since $\md^{\beta, \beta'}_{  \eta} L_{m } \subset \md^{\bar \beta, {\bar \beta}'}_{  \eta} L_{m }$ for any $\bar \beta \le \beta$ and ${ \bar \beta}' \le \beta'$ we may w.l.o.g.~assume that $\beta'< \beta< \frac12$.  

We establish the well-posedness of \eqref{eq:linear_forward_rsde} using a standard fixed point argument. To this end, we fix some $\tau \in (0,1)$ to be determined later, 
introduce the set
$$
    \mbb_\tau:= \Big\{(Y,Y') \in \md^{\beta, \beta'}_{  \eta} L_{m }([0,\tau];\R^{d_Y})  \Big|\  Y_0=\xi,\ Y'_0=F_0 \xi + f_0,   \Big\}
$$
that we equip with the norm $\|\cdot \|_{ \beta, \beta', m,[0,\tau]} $ and consider the mapping 
$$
\begin{array}{lcccl}
   \Psi: & \mb_\tau  & \rightarrow  & \mb_\tau&\\
    & (y,y') & \mapsto & (Y,Y')& := \Big( \xi+ \int_0^. b_r(y_r)dr + \int_0^. \sigma_r(y_r)dW_r +\int_0^. (F_r y_r+ f_r )d\bm \eta_r, Fy+f \Big).
\end{array}
$$

Our goal is to show that this mapping is a contraction if $\tau$ is small enough. To show that $\Psi$ maps the set $\mbb_\tau$ into itself, let $(y,y') \in \mbb_\tau$ and set $(Y,Y')= \Psi(y,y').$ According to Lemma \ref{productest}, 
$$  
    Y'=Fy+f \in C^{\beta} L_m. 
$$
Moreover, in view of {Step 1} in the proof of Proposition \ref{priorest-lrsde}, 
$$
    \int_0^. b_r(y_r) dr, \int_0^. \sigma_r(y_r)dW_r \in C^{\beta} L_m
$$    
and by Proposition \ref{prop:roughinteg}, 
$$
    (\int_0^. (Fy+f)_r d\BBeta_r, Fy+f ) \in \D_\eta^{\beta, \beta'}L_m.
$$    
 
Next, we show that $\Psi$ is a contraction if $\tau$ is small. For this, let $(y,y'), (\by,\by') \in \mbb_\tau,$ and $(Y,Y')= \Psi(y,y')$ and $(\bY,\bY')= \Psi(\by,\by')$. Moreover, we set $(z,z'):=(Fy+f, (Fy)'+f')$, define $(\bz,\bz')$ similarly and put 
$$
    (\Delta y, \Delta y'):= (y-\by, y'-\by'). 
$$    
Differences $\Delta Y, \Delta Y', \Delta z, \Delta z'$ are defined similarly. Note that $\Delta Y'= F \Delta y.$ It follows by Lemma \ref{productest} that 
\be\label{Deltay'}
\| \delta \Delta Y' \|_{\beta',m } \lesssim K \|  \delta \Delta y \|_{\beta',m} \lesssim K  \|  \delta \Delta y \|_{\beta,m} \tau^{\beta-\beta'},
\ee
where the implicit constant depends only on $T$. 
Since $\Delta z= F \Delta y = \Delta Y'$ this shows that 
\be\label{deltaz}
\|  \delta \Delta z \|_{\beta,m } \lesssim K \|  \delta \Delta y \|_{\beta,m}.
\ee

Since $\Delta z' = (F\Delta y)'$, it follows again by Lemma \ref{productest} that
\be\label{deltaz'}
\| \delta \Delta z' \|_{\beta',m }   \lesssim K ( \|  \delta \Delta y \|_{\beta,m}  +\|  \delta \Delta y' \|_{\beta',m} )
\ee
and because $\Delta z'_0=0,$
\be\label{deltasupz'}
    \sup_{r\in [0,\tau]}\| \Delta  z'_r \|_{ m }   \lesssim K ( \|  \delta \Delta y \|_{\beta,m}  +\|  \delta \Delta y' \|_{\beta',m} ).
\ee

Similarly, since $R^{\Delta z}_{s,t} = \delta (F\Delta y)_{s,t} - (F \Delta y)'_s \delta \eta_{s,t},$ we obtain
\be\label{edeltaz}
\| \E. R^{\Delta z} \|_{\beta+ \beta', m} \lesssim K (\| \E. R^{\Delta y} \|_{\beta+ \beta', m} + \|  \delta \Delta y \|_{\beta,m}). 
\ee

In view of the definition of $\Psi,$ we have for any $s,t \in [0,\tau],$
\[
\delta (\Delta Y)_{s,t} = \int_s^t [b_r(y_r)-b_r(\by_r)] dr +\int_s^t [\sigma_r(y_r)-\sigma_r(\by_r)] dW_r +\int_s^t F_r\Delta y_r  d\bm \eta_r.
\]
By the Burkholder-Davis-Gundy and the H\"older inequality, 
\be\label{Deltabsigma}
\begin{split}
    &  \| \int_s^t [b_r(y_r)-b_r(\by_r)] dr \|_m \lesssim K \|  \delta \Delta y\|_{\beta, m} (t-s), \\
    & \| \int_s^t [\sigma_r(y_r)-\sigma_r(\by_r)] dW_r \|_m \lesssim K \|  \delta \Delta y\|_{\beta, m} (t-s)^{\frac12}.    
\end{split}
\ee

Let us now set $L_t:= \int_0^t F_r\Delta y_r  d\bm \eta_r$. It follows by \cite[Theorem 3.5]{friz2021roughito} that 
\begin{equation*}
\begin{split}
& \| R^L_{s,t}\|_m \lesssim_{ \bm \eta} (\|  \delta \Delta z\|_{\beta,m} + \sup_{r \in [0,\tau]}\|   \Delta z'_r\|_m )(t-s)^{\alpha + \beta } + (\|\E. R^{\Delta z}\|_{\beta+\beta',m} + \|  \delta \Delta z'\|_{\beta',m}) (t-s)^{\alpha+ \beta + \beta'},   \\
&\| \E_s R^L_{s,t}\|_m \lesssim_{\bm \eta} (\| \E. R^{\Delta z} \|_{\beta+\beta',m} + \|  \delta \Delta z'\|_{\beta',m} )(t-s)^{\alpha + \beta + \beta'}+  \sup_{r \in [0,\tau]}\|    \Delta z'_r\|_m  (t-s)^{ 2 \alpha}.
\end{split}
\end{equation*}

Substituting \eqref{deltasupz'}, \eqref{deltaz}, \eqref{deltaz'}, \eqref{edeltaz} into the above inequalities yields  
\be\label{est-deltaL}
\begin{split}
    & \| R^L \|_{\beta+ \beta',m} \lesssim_{\BBeta} K (\|  \delta \Delta y\|_{\beta,m} + \|  \delta \Delta y' \|_{\beta',m} + \| \E. R^{\Delta y} \|_{\beta+\beta',m}) \tau^{\alpha-\beta'}  \\
    & \|\E. R^L  \|_{\beta+ \beta',m} \lesssim_{\BBeta} K (\|  \delta \Delta y\|_{\beta,m} + \|  \delta \Delta y' \|_{\beta',m} + \| \E. R^{\Delta y} \|_{\beta+\beta',m}) \tau^{2\alpha-\beta -\beta'}
\end{split}
\ee
and we conclude from \eqref{Deltabsigma} and \eqref{est-deltaL} that
\be\label{Deltay}
\|  \Delta Y  \|_{\beta,m} \lesssim_{\BBeta} K \|(\Delta y, \Delta y')\|_{\beta, \beta', m} \tau^{\frac12 - \beta}.
\ee

Since 
$$
 R^{\Delta Y}_{s,t} = \delta (\Delta Y)_{s,t} - \Delta Y'_s \delta \eta_{s,t} = \int_s^t [b_r(y_r)-b_r(\by_r)] dr +\int_s^t [\sigma_r(y_r)-\sigma_r(\by_r)] dW_r + R^L_{s,t}
$$
it follows by \eqref{Deltabsigma} and \eqref{est-deltaL} that
\be\label{RDeltaY}
\| \E. R^{\Delta Y} \|_{\beta+\beta',m}  \lesssim_{\BBeta} K \| ( \Delta y, \Delta y') \|_{\beta, \beta',m} \tau^{ 2 \alpha- \beta-\beta' }. 
\ee
Moreover, in view of \eqref{Deltay}, \eqref{Deltay'} and \eqref{RDeltaY}, 
$$
\|(\Delta Y, \Delta Y')\|_{\beta, \beta',m} \le C_{\BBeta,K} \| ( \Delta y, \Delta y') \|_{\beta, \beta',m} \tau^{(\frac12-\beta) \wedge (\beta-\beta')}.
$$

Choosing $\tau$ small enough so that $C_{\BBeta,K} \tau^{(\frac12-\beta) \wedge (\beta-\beta')}< \frac12,$ we see that $\Psi $ is a contraction. Since $\tau$ is independent of the initial condition, the argument can be iterated to obtain a unique global solution.
\end{proof}

\begin{rem}
The above theorem provides a generic solution theory for rough SDEs with an affine rough driver, together with local Lipschitz estimates with respect to the underlying rough paths (see the following subsection). Compared with earlier approaches to rough SDEs (see e.g. \cite{frizroughpaths2021} and references therein), we believe that our method is more amenable to discretization, incorporates randomness naturally, and can be readily adapted to a variety of applications, including filtering, mean-field control, and differential games.

\end{rem}

Replacing stochastic by deterministic sewing in the above proof, we obtain the well-posedness of linear RDE (i.e. $\sigma =0$). If $F$ is deterministic and $b=0$, a corresponding result can be found in, e.g.~\cite{frizroughpaths2021}. 

In the following we allow $(F,F')$ to be stochastic but restricted to $\mathscr{D}^{\beta, \beta'}_\eta L_\infty$, compared to the rough SDE case where coefficients are assumed to be in $\mathbf{D}^{\beta, \beta'}_\eta L_\infty$.

\begin{cor}\label{coro:linear}
Let $(F,F') \in \D^{\beta, \beta'}_\eta L_\infty $ with $\beta \in  (\frac{1}{4}, \alpha]$, $\beta' \in (0, \beta]$, and $  \alpha  + \beta + \beta' > 1$. Let $b:[0,T] \times \Omega \times \R^{d_X} \rightarrow \R^{d_X}$ be a progressively measurable vector field such that almost surely for any $(t,x, \bar x) \in [0,T] \times \R^{d_X} \times \R^{d_X} ,$
\begin{equation*}
    \begin{split}
       & \lvert b(  t, \ome, x) \rvert \lesssim  1 + \lvert x \rvert \\
      &    \lvert b(  t, \ome, x) - b(  t, \ome, \bar x)\rvert \lesssim  \lvert x - \bar x \rvert.
    \end{split}   
\end{equation*}
Then for any $\xi \in L_\infty,$ there exists a unique solution $X$ with $(X,FX) \in \D^{\alpha, \beta}_\eta L_\infty$ to the RDE
\be\label{eq:rough-purelinear}
X_t = \xi +  \int_0^t b_r ( X_r)  d  r + \int_0^t F_r X_r d \BBeta_r.
\ee
In particular, $X$ is uniformly bounded.
\end{cor}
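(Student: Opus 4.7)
The plan is to mirror the proof of Theorem \ref{thm:linear_sde_well_posed}, exploiting two simplifications of \eqref{eq:rough-purelinear}: since there is no Brownian integral, stochastic sewing can be replaced throughout by deterministic Gubinelli sewing (Lemma \ref{lem:gub-sew}), which delivers $L_\infty$ bounds on the rough integral rather than $L_m$ bounds; and the absence of a forcing term $f$ further simplifies the product estimates. Accordingly, I would set up a Picard iteration in $\D^{\beta,\beta'}_\eta L_\infty$ in place of the larger $\md^{\beta,\beta'}_\eta L_m$.

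Concretely, for a small $\tau \in (0,1]$ to be chosen later, I would introduce
$$\mbb_\tau := \big\{\, (y,y') \in \D^{\beta,\beta'}_\eta L_\infty([0,\tau];\R^{d_X}) \,:\, y_0 = \xi,\ y'_0 = F_0\xi \,\big\}$$
and the map $\Psi(y,y') := \big( \xi + \int_0^\cdot b_r(y_r)dr + \int_0^\cdot F_r y_r\, d\BBeta_r,\ Fy \big)$. Stability $\Psi(\mbb_\tau) \subset \mbb_\tau$ follows from Lemma \ref{productest} (giving $(Fy, (Fy)') \in \D^{\beta,\beta'}_\eta L_\infty$), Lemma \ref{lem:gub-sew} (placing the rough integral together with its Gubinelli derivative $Fy$ in $\D^{\alpha,\beta}_\eta L_\infty \subset \D^{\beta,\beta'}_\eta L_\infty$), and the Lipschitz $L_\infty$-regularity of the drift. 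For the contraction step, I would repeat the estimates in the proof of Theorem \ref{thm:linear_sde_well_posed} with $L_m$ replaced by $L_\infty$: the Lipschitz property of $b$ controls the drift difference trivially, while Lemma \ref{lem:gub-sew} yields the $L_\infty$-bound
$$\big\| R^{\int F \Delta y\, d\BBeta} \big\|_{\beta+\beta',\infty,[0,\tau]} \;\lesssim_{\BBeta}\; \| \delta (F\Delta y)' \|_{\beta',\infty,[0,\tau]} + \| R^{F\Delta y} \|_{\beta+\beta',\infty,[0,\tau]}.$$
Combining these bounds produces a factor $\tau^\delta$ with some $\delta > 0$ (using $\alpha + \beta + \beta' > 1$ and $\beta \ge \beta'$) in front of the difference seminorm, so $\Psi$ is a contraction for $\tau$ small. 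Since $\tau$ depends only on $K := \|(F,F')\|_{\eta,\beta,\beta',\infty}$, $\vertiii{\BBeta}_\alpha$, and the fixed exponents, iteration across $[0,T]$ produces a unique global solution, and uniform boundedness of $X$ is automatic from $(X, FX) \in \D^{\alpha,\beta}_\eta L_\infty$ together with $\xi \in L_\infty$.

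The main technical obstacle is estimating the \emph{pointwise} (as opposed to conditional-expectation) remainder $\|R^{F \Delta y}\|_{\beta+\beta',\infty}$ that feeds into Lemma \ref{lem:gub-sew}; the bound furnished by Lemma \ref{productest} is only for $\|\E_\bullet R^{F\Delta y}\|_{\beta+\beta',m}$. The required pointwise estimate follows from the identity
$$R^{F\Delta y}_{s,t} = F_s R^{\Delta y}_{s,t} + \delta F_{s,t}\, \Delta y_s + F'_s \Delta y_s\, \delta \eta_{s,t},$$
each term of which can be estimated directly in $L_\infty$ using $(F,F') \in \D^{\beta,\beta'}_\eta L_\infty$ and $\Delta y \in \D^{\beta,\beta'}_\eta L_\infty$; this is essentially the $L_\infty$-variant of Lemma \ref{productest}. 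Once this is in hand, the remainder of the argument is a direct transcription of the proof of Theorem \ref{thm:linear_sde_well_posed}.
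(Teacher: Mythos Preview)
Your approach is correct and is precisely the paper's: the authors simply remark that replacing stochastic sewing by deterministic (Gubinelli) sewing in the proof of Theorem \ref{thm:linear_sde_well_posed} yields the corollary, and your proposal spells this out.

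One correction: the displayed identity for $R^{F\Delta y}$ is not right as written. The term $\delta F_{s,t}\,\Delta y_s$ has only $\beta$-regularity in $|t-s|$, which is insufficient for Lemma \ref{lem:gub-sew}. The correct Leibniz-type decomposition is
\[
R^{Fz}_{s,t} \;=\; F_s\, R^{z}_{s,t} \;+\; R^{F}_{s,t}\, z_s \;+\; \delta F_{s,t}\,\delta z_{s,t},
\]
whose three terms have regularity $\beta+\beta'$, $\beta+\beta'$ (using $(F,F')\in \D^{\beta,\beta'}_\eta L_\infty$, so $R^F\in C^{\beta+\beta'}_2 L_\infty$), and $2\beta\ge \beta+\beta'$ respectively. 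With this correction the rest of your argument goes through unchanged.
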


\subsection{It\^o-Lyons continuity}

The following proposition establishes the continuity of solutions with respect to the parameters of the equation and the driving rough path. This result will later be refined for the case of $\BBeta$-dependent parameters. 

\begin{prop}[It\^o-Lyons continuity]\label{prop:continuity_of_sol_x}
For any $\BBeta, \bBBeta \in \mc^{\alpha},$ let 
\[  
    X:=X^{\xi, \BBeta} \quad \mbox{and} \quad \bar X := X^{\bar \xi, \bar \BBeta} 
\]    
be solutions to the rSDE \eqref{eq:linear_forward_rsde} with respective drivers $\bm \eta$ and $ \bar {\bm \eta}$, initial values $\xi$ and $\bar \xi$ and coefficients 
\[
    (b, \sigma, F, F', f, f') \quad \mbox{and} \quad (\bar b, \bar \sigma, \bar F, \bar F', \bar f, \bar f'). 
\]    
If both equations satisfy Assumption \ref{ass:m-linear-growth_lip} with the same parameters $(\beta,\beta',m,n_b,n_\sigma, K)$, then 
    \begin{equation}
        \begin{split}
        & \lVert X - \bar X \rVert_{S^m} + \lVert \delta(X - \bar X) \rVert_{\bar \alpha; m} + \lVert \E_\bullet [R^{X} - \bar R^{\bar X}] \rVert_{\bar \alpha+\beta  }  \\
        &\quad  \lesssim  \lVert \xi - \bar \xi \rVert_m +  \| b(\bar X) - \bar b (\bar X) \|_{H^{m,n^b}}   + \| \sigma(\bar X) - \bar \sigma (\bar X) \|_{H^{m,n^\sigma}}  \\
            &\quad \quad + \rho_\alpha(\bm \eta, \bm {\bar \eta}) + d_{\eta, \bar \eta, \beta,\beta',\infty} \left((F,F'), ( \bar F, \bar F') \right)+ d_{\eta, \bar \eta, \beta,\beta',m} \left((f,f'), ( \bar f, \bar f')\right)\\
        \end{split}
    \end{equation}
    where $\bar \alpha := \alpha \wedge (\frac12- \frac1{n^\sigma})$. The implied constant depends only on $\alpha, \beta, \beta', M, \bar M$ where
    $M$ is any constant s.t.
    \begin{equation*}
     K+  \vertiii{\BBeta  }_\alpha +    \| K^b \|_{H^{m, n^b}} + \| K^\sigma \|_{H^{m, n^\sigma}} + \|f_0\|_m + \|(f, f')\|_{\beta,\beta', m} 
        \leq M
    \end{equation*}
    and $\bar M$ is a corresponding constant for the parameter  $(\bar b, \bar \sigma, \bar F, \bar F', \bar f, \bar f')$ and initial condition $\bar \xi$.
\end{prop}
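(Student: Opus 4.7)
The plan is to mirror the five-step a priori estimate developed in the proof of Proposition \ref{priorest-lrsde}, but applied to the difference processes. Set $\Delta X := X - \bar X$ and $\Delta \xi := \xi - \bar \xi$. Since $X$ and $\bar X$ solve \eqref{eq:linear_forward_rsde} with the respective coefficients, we have the Gubinelli decomposition $\Delta X' = (FX+f) - (\bar F \bar X + \bar f)$, which we split as
\[
    \Delta X' = F\,\Delta X + (F - \bar F)\bar X + (f - \bar f),
\]
and analogously for the derivative $(\Delta X')'$. Proposition \ref{priorest-lrsde} already controls $\bar X$ and $(\bar X, \bar F \bar X + \bar f)$ in terms of $\bar M$, so the inhomogeneous terms above can be bounded in $\md_{\eta}^{\beta,\beta'}L_m$ by the prescribed distances $d_{\eta,\bar\eta,\beta,\beta',\infty}((F,F'),(\bar F,\bar F'))$ and $d_{\eta,\bar\eta,\beta,\beta',m}((f,f'),(\bar f,\bar f'))$, using Lemma \ref{productest} for the products.

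Next, I would write the analogue of the identity $\delta X_{s,t} = J_{s,t} + \int b + \int \sigma dW + (\text{local rough part})$ for $\Delta X$. The drift and martingale parts split canonically as
\[
\begin{split}
    \int_s^t [b_r(X_r,\BBeta) - \bar b_r(\bar X_r,\bar\BBeta)]\,dr & = \int_s^t [b_r(X_r,\BBeta) - b_r(\bar X_r,\BBeta)]\,dr + \int_s^t[b_r(\bar X_r,\BBeta)-\bar b_r(\bar X_r,\bar\BBeta)]\,dr,
\end{split}
\]
and similarly for $\sigma$. The first piece is Lipschitz in $\Delta X$ and produces terms of the form encountered in Step 1 of Proposition \ref{priorest-lrsde}; the second piece is the ``coefficient discrepancy'' and contributes the $H^{m,n^b}$ and $H^{m,n^\sigma}$ distances appearing in the statement. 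The delicate piece is the difference of rough stochastic integrals
\[
    \int_0^t (FX+f)_r\,d\BBeta_r - \int_0^t (\bar F \bar X + \bar f)_r\,d\bar\BBeta_r,
\]
which I would handle via a quantitative version of Proposition \ref{prop:roughinteg}/Lemma \ref{lem:gub-sew}: the stochastic sewing lemma applied to the two-parameter germ $Z_s\delta\eta_{s,t} + Z'_s \eta^{(2)}_{s,t} - \bar Z_s\delta\bar\eta_{s,t} - \bar Z'_s \bar\eta^{(2)}_{s,t}$. Its $\delta$-increments naturally generate the rough path distance $\rho_\alpha(\BBeta,\bar\BBeta)$ together with $\vertiii{(Z,Z');(\bar Z,\bar Z')}_{\beta,\beta',m}$, while the expectation germ contributes the $\alpha+\beta+\beta'$ scale needed for stochastic sewing.

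Combining these bounds gives, on any subinterval $[s,t]$ of size $\tau$ small compared to $(KM\bar M)^{-1/\beta'}$, the inequality
\[
\begin{split}
    \Xi_{s,t} + \Psi_{s,t} \lesssim & \ \|\Delta X_s\|_m + \mathcal{D} + C(M,\bar M)\,(\tau^{\beta'}\Xi_{s,t} + \tau^{\beta'}\Psi_{s,t}),
\end{split}
\]
where $\Xi, \Psi$ are the analogues of the $\bar\alpha$-H\"older and $\bar\alpha+\beta$-Gubinelli remainder seminorms of $\Delta X$ and $\mathcal{D}$ denotes the sum of distances appearing on the right-hand side of the proposition. Absorbing the small terms on the left and invoking the Burkholder-Davis-Gundy inequality, one deduces
$\|\Delta X\|_{S^m,[s,t]} \lesssim \|\Delta X_s\|_m + \mathcal{D}$. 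A telescopic argument over a partition whose mesh depends only on $(\alpha,\beta,\beta',M,\bar M)$ then upgrades this local estimate to the global bound stated in the proposition.

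The main obstacle is writing down the quantitative stability estimate for the rough integral difference, which requires careful application of the stochastic sewing lemma to a germ that mixes two distinct rough paths and two distinct controlled paths; all remaining ingredients are direct adaptations of the arguments in Proposition \ref{priorest-lrsde} together with Lemma \ref{productest}.
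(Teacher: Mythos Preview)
Your sketch follows the paper's proof closely: the same splitting of the drift and diffusion differences into a Lipschitz part and a ``coefficient discrepancy'' part, the same use of Lemma~\ref{productest} to control the product structure of $\Delta X'$, and the same telescopic globalization at the end.

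The one genuine technical difference is in how you propose to bound the rough-integral remainder
\[
    J_{s,t}=\int_s^t Z_r\,d\BBeta_r-\int_s^t \bar Z_r\,d\bBBeta_r-(Z_s\delta\eta_{s,t}+Z_s'\mn_{s,t})+(\bar Z_s\delta\bar\eta_{s,t}+\bar Z_s'\mnb_{s,t}).
\]
You suggest applying the stochastic sewing lemma directly to the germ $Z_s\delta\eta_{s,t}+Z_s'\mn_{s,t}-\bar Z_s\delta\bar\eta_{s,t}-\bar Z_s'\mnb_{s,t}$. But the coboundary of this germ contains the term $(R^Z-\bar R^{\bar Z})_{s,u}\,\delta\bar\eta_{u,t}$, and expanding $R^Z-\bar R^{\bar Z}$ produces $F_s(R^X_{s,u}-\bar R^{\bar X}_{s,u})$, which (after subtracting the drift, martingale and $Z'\mn$ pieces) is essentially $F_sJ_{s,u}$. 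Hence $\|\delta J_{s,u,t}\|_m$ is bounded by $\|J_{s,u}\|_m(t-s)^\alpha$ plus terms of the desired size, which is \emph{not} the hypothesis of the standard stochastic sewing lemma. The paper therefore invokes instead a Davie--Gr\"onwall-type lemma (\cite[Lemma~4.11]{friz2021roughito}), tailored exactly to germs whose coboundary satisfies $\|\delta A_{s,u,t}\|\lesssim\|A_{s,u}\|(t-s)^\alpha+\text{(good terms)}$; combined with the crude a~priori bound $\|J_{s,t}\|_m\lesssim|t-s|^{\alpha+\beta}$ obtained from estimating each rough integral separately via Proposition~\ref{priorest-lrsde}, this yields $\|J_{s,t}\|_m\lesssim(\Gamma_{s,t}+\theta)(t-s)^{\alpha+\beta}$ on short intervals. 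Your alternative of carrying the self-reference all the way up to the level of $(\Xi,\Psi)$ and absorbing there can be made to work as well, but be aware that the sewing step as you state it does not close on its own; the Davie--Gr\"onwall mechanism (or an equivalent bootstrap) is what actually resolves the circularity.
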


\begin{proof}
    The arguments that follow are similar to those given in the proof of Proposition \ref{priorest-lrsde}, {but we apply a Davie-Gr\"onwall-type lemma \cite[Lemma 4.11]{friz2021roughito} instead of the stochastic sewing lemma}. Let 
\[    
    \tilde X := X - \bar X \quad \mbox{and} \quad \Gamma_{s, t} \defeq \sup_{r \in [s, t]}\lVert \tilde X_r \rVert_m, 
\]    
    and set
    \begin{equation}
        \begin{split}
            \theta &\defeq \|b(\bX)-\bar b(\bX)\|_{H^{m,n^b}} + \|\sigma(\bX)-\bar \sigma(\bX)\|_{H^{m,n^\sigma}} + \rho_\alpha(\bm \eta, \bm {\bar \eta})  \\
            &\quad \ + d_{\eta, \bar \eta, \beta,\beta',\infty} \left((F,F'), ( \bar F, \bar F') \right)+ d_{\eta, \bar \eta, \beta,\beta',m} \left((f,f'), ( \bar f, \bar f')\right).
        \end{split}
    \end{equation}
    Furthermore, we put
$$    
    Z \defeq FX + f, \quad Z' \defeq (FX)'+f'= (FF + F')X + Ff + f' \quad \mbox{and} \quad 
    R^Z \defeq \delta Z - Z' \delta \eta; 
$$    
 the processes $\bar Z$, $\bar Z'$, and $R^{\bar Z}$ are defined similarly. Finally, we introduce the ``remainder terms''
    \begin{equation*}
        J_{s, t} = \int_s^t Z_r d\bm \eta_r - \int_s^t \bar Z_r d\bm {\bar \eta}_r - (Z_s \delta \eta_{s, t} + Z'_s \mn_{s, t} ) + (\bar Z_s \delta \bar \eta_{s, t} + \bar Z'_s \mnb_{s,t}).
    \end{equation*}
    In terms of this notation,   
    \be\label{rep-DeltaX}
    \begin{split}
    \delta \tX_{s,t}= &   \int_s^t \left[b_r(X_r) - \bar b_r (\bar X_r) \right]dr + \int_s^t \left[ \sigma_r(X_r) - \bar \sigma_r (\bar X_r) \right] dW_r + J_{s,t} \\
    & \   + (Z_s \delta \eta_{s, t} + Z'_s \mn_{s, t} ) - (\bar Z_s \delta \bar \eta_{s, t} + \bar Z'_s \mnb_{s,t})        
    \end{split}
    \ee
   
    {\bf Step 1.}  We start by estimating the process $\delta \tX_{s,t}$. 
\begin{itemize}
\item The first two terms on the right-hand side in \eqref{rep-DeltaX} can be bounded using the Lipschitz continuity of $b$ and $\sigma$, respectively. It holds that 
    \begin{align*}
        & \left \lVert \int_s^t b_r(X_r) - \bar b_r (\bar X_r) dr \right \rVert_m \\
        & \quad \quad \leq (t - s)^{1 - \frac1{m}}  \left[ \E \int_s^t \lvert b_r(X_r) - \bar b_r (\bar X_r) \rvert^m dr\right]^\frac1{m} \\
        & \quad \quad \leq (t - s)^{1 - \frac1{m}}  \left[ \E \int_s^t \lvert b_r(X_r) - b_r (\bar X_r) \rvert^m dr\right]^\frac1{m} + (t - s)^{1 - \frac1{m}}  \left[ \E \int_s^t \lvert b_r(\bar X_r) - \bar b_r (\bar X_r) \rvert^m dr\right]^\frac1{m} \\
        & \quad \quad \lesssim \Gamma_{s, t} (t - s)  +  \theta (t - s)^{1 - \frac1{n^b}}
    \end{align*}
    where the implicit constant depends only on $T$. Likewise, 
    \begin{align*}
        \left \lVert \int_s^t \left[ \sigma_r(X_r) - \bar \sigma_r (\bar X_r) \right] dW_r \right \rVert_m &\lesssim  \Gamma_{s, t} (t - s)^\frac1{2} + \theta (t - s)^{\frac1{2} - \frac1{n^\sigma}} .
    \end{align*}

    \item The last term on the right-hand side in \eqref{rep-DeltaX} equals
    \begin{align*}
        & (Z_s \delta \eta_{s, t} + Z'_s \mn_{s, t} ) - (\bar Z_s \delta \bar \eta_{s, t} + \bar Z'_s \mnb_{s,t}) \\
        = & Z_s(\delta \eta_{s, t} - \delta \bar \eta_{s, t}) + (Z_s - \bar Z_s) \delta \bar \eta_{s, t} + Z'_s(\mn_{s, t} - \mnb_{s, t}) + (Z'_s - \bar Z'_s) \mnb_{s, t}.
    \end{align*}
    \item To handle the second term on the right hand side of the above identity, we note that
    \begin{align*}
        \lVert Z_s - \bar Z_s \rVert_m &\leq \lVert F_s - \bar F_s \rVert_\infty \lVert X_s \rVert_m + \lVert F_s \rVert_\infty \lVert \tilde X_s \rVert_m + \lVert f_s - \bar f_s \rVert_m \\
        &\lesssim \Gamma_{s, t} + \theta
    \end{align*}
    where the implied constant only depends on $\sup_{s\in[0,T]} \lVert X_s \rVert_m$ and $K$. Similarly, for the forth term,
    \begin{align*}
        \lVert Z'_s - \bar Z'_s \rVert_m &\leq \lVert F'_s - \bar F'_s \rVert_\infty \lVert X_s \rVert_m + \lVert F'_s \rVert_\infty \lVert \tilde X_s \rVert_m + \lVert f'_s - \bar f'_s \rVert_m \\
        &\quad +\lVert F_s - \bar F_s \rVert_\infty \lVert X'_s \rVert_m + \lVert F_s \rVert_\infty \lVert \tilde X'_s \rVert_m \\
        &\leq \lVert F'_s - \bar F'_s \rVert_\infty \lVert X_s \rVert_m + \lVert F'_s \rVert_\infty \lVert \tilde X_s \rVert_m + \lVert f'_s - \bar f'_s \rVert_m +\lVert F_s - \bar F_s \rVert_\infty \lVert F_s X_s + f_s \rVert_m \\
        &\quad + \lVert F_s \rVert_\infty (\lVert F_s - \bar F_s \rVert_\infty \lVert X_s \rVert_m + \lVert F_s \rVert_\infty \lVert \tilde X_s \rVert_m + \lVert f_s - \bar f_s \rVert_m) \\
        &\lesssim \Gamma_{s, t} + \theta
    \end{align*}
    where the implied constant only depends on $\sup_{s\in[0,T]}\lVert X_s \rVert_m$, $K, \|(f, f')\|_{\beta,\beta', m}$ and $\|f_0\|_m$.
    As a result, 
    \begin{equation*}
        \lVert (Z_s \delta \eta_{s, t} + Z'_s \mn_{s, t} ) - (\bar Z_s \delta \bar \eta_{s, t} + \bar Z'_s \mnb_{s,t}) \rVert_m \lesssim (\Gamma_{s, t} + \theta)(t - s)^{\alpha}
    \end{equation*}
    where the implied constant only depends on $\sup_s \lVert X_s \rVert_m$, $K, \|(f, f')\|_{\beta, \beta', m}$, $\|f_0\|_m$  and $\| \bar {\bm \eta} \|_\alpha$. 
\end{itemize}    
    Combining the above inequalities yields,
 \begin{equation}\label{eq:tildeX}
         \lVert \delta \tilde X_{s, t} \rVert_m \lesssim \lVert J_{s, t} \rVert + \Gamma_{s, t}(t - s)^\alpha + \theta (t - s)^{\frac12 - \frac1{n^\sigma}}.
     \end{equation}

{\bf Step 2.} We proceed to $R^X-\bar R^{\bar X}$. We first notice that
\begin{equation*}
\begin{split}
      & R^X_{s,t} - \bar R^{\bar X}_{s,t}  = \tX_{s,t}- Z_s \eta_{s,t} + \bar Z_s \bar \eta_{s,t}\\
      & \quad \quad = \int_s^t \left[b_r(X_r) - \bar b_r (\bar X_r) \right]dr + \int_s^t \left[ \sigma_r(X_r) - \bar \sigma_r (\bar X_r) \right] dW_r + J_{s,t} 
    +  Z'_s \mn_{s, t}  -   \bar Z'_s \mnb_{s,t}
\end{split}
\end{equation*}
Then proceeding similarly as in \eqref{eq:tildeX}, we see that 
\begin{equation}\label{eq:Rtildex}
        \lVert R^X_{s, t} - \bar R^{\bar X}_{s, t} \rVert_m \lesssim \lVert J_{s, t} \rVert_m + \Gamma_{s, t}(t - s)^\alpha + \theta (t - s)^{\frac12 - \frac1{n^\sigma}}
    \end{equation}
    and
    \begin{equation}\label{eq:ERtildeX}
        \lVert \E_s(R^X_{s, t} - \bar R^{\bar X}_{s, t}) \rVert_m \lesssim \lVert J_{s, t} \rVert_m + \Gamma_{s, t}(t - s)^{2\alpha} + \theta (t - s)^{(1-\frac1{n^b}) \wedge 2 \alpha}.
    \end{equation}

{\bf Step 3.} Now we estimate $\|J_{s,t}\|_m$ in terms of $(\Gamma_{s,t},\theta)$. To this end, we note that
    \be\label{deltaJ-decom}
    \begin{split}
     \delta J_{s, u, t} &= J_{s, t} - J_{s, u} - J_{u, t} \\
        &= R^Z_{s, u}\delta( \eta -   \bar \eta)_{u, t} + \delta Z'_{s, u}(\mn - \mnb)_{u, t} + (R^Z - \bar R^{\bar Z})_{s, u} \delta \bar \eta_{u, t} + \delta (  Z' - \bar Z')_{s, u} \mnb_{u, t}.
    \end{split}
    \ee
The four terms on the right-hand side of the above inequality can be estimated as follows.  
\begin{itemize}
    \item Using that
    $$
        R^Z_{s, t} = R^F_{s, t} X_t + F_s R^X_{s, t} + \delta X_{s, t} \delta F_{s, t} + R^f_{s, t},
    $$
    the first term on the right-hand side of \eqref{deltaJ-decom} can be bounded by 
    \be\label{1}
    \|R^Z_{s,u} \delta (\eta-\bar \eta)_{u,t}\|_m \lesssim \theta (t-s)^{\alpha+\beta}, \quad \mbox{and} \quad \|\E_s[R^Z_{s,u}] \delta (\eta-\bar \eta)_{u,t}\|_m \lesssim \theta (t-s)^{\alpha+\beta+\beta'}.
    \ee
\item Using 
    $$
        \delta Z'_{s, t} = F_t (X'_t - X'_s) + (F_t - F_s)X'_s + F'_t(X_t - X_s) + (F'_t - F'_s)X_s + f'_t - f'_s,
    $$
    the second term can be bounded by 
    \be\label{2}
    \| \delta Z'_{s, u}(\mn - \mnb)_{u, t} \|_m \lesssim \theta (t-s)^{\beta' + 2\alpha}.
    \ee
\item     
For the third term, it follows from $R^Z_{s,t}=R^F_{s,t} X_s + F_s R^X_{s,t} + F_{s,t}X_{s,t}+R^f_{s,t}$ that
\begin{equation*}
    \begin{split}
             R^Z_{s, t} - \bar R_{s, t}^{\bar Z} &= (R^F_{s, t} - \bar R^{\bar F}_{s, t}) X_s + \bar R^{\bar F}_{s, t} \tilde X_s + F_s (R^X_{s, t} - \bar R^{\bar X}_{s, t}) + (F_s - \bar F_s) \bar R^{\bar X}_{s, t} \\
        &\quad+ (\delta (F - \bar F)_{s, t}) \delta X_{s, t} + \bar F_{s, t} \delta \tilde X_{s, t}+ R^f_{s, t} - \bar R^{\bar f}_{s, t} 
    \end{split}
 \end{equation*}
and hence from \eqref{eq:Rtildex} and \eqref{eq:ERtildeX} that
\be\label{3}
 \begin{split}
    & \|(R^Z - \bar R^{\bar Z})_{s, u} \delta \bar \eta_{u, t} \|_m \\
    & \ \ \ \lesssim  \|J_{s,u}\|_m (t-s)^\alpha + \|\delta \tX_{s,u}\|_m(t-s)^{\alpha+\beta} + \theta (t-s)^{\alpha+\beta} + \Gamma_{s,t} (t-s)^{\alpha+\beta}, \\
  & \|\E_s(R^Z - \bar R^{\bar Z})_{s, u} \delta \bar \eta_{u, t} \|_m \\
  & \ \ \ \lesssim  \|J_{s,u}\|_m (t-s)^\alpha +  \|\delta \tX_{s,u}\|_m(t-s)^{\alpha+\beta} + \theta (t-s)^{\alpha+\beta+\beta'} + \Gamma_{s,t} (t-s)^{\alpha+\beta+\beta'},
 \end{split}
 \ee
 where the implicit constant depends on $\|(X,X')\|_{\beta,\beta',m}$, $\|(\bar X,\bar X')\|_{\beta,\beta',m}$, $M$ and $\bar M$, and thus in view of Proposition \ref{priorest-lrsde}, on $(M,\bar M).$  
 
 \item For the fourth term, by linearity of operators $(\cdot)'$ and $\delta (\cdot)_{s,t}$, we have 
\begin{align*}
    \delta (Z'-\bar Z')_{s,t} &= \delta ( \tF X)'_{s,t} + \delta (F \tX)'_{s,t} + \delta \tf_{s,t}. \\
    &= \delta(\tF'X)_{s,t} + \delta(\tF X')_{s,t} + \delta(F'\tX)_{s,t} + \delta(F \tX')_{s,t} + \delta \tf'_{s,t} ,
\end{align*}
where we set $\tF:=F-\bar F$ and similar for $\tF', \tf'.$ Since $\tX'=\tF X + F \tX+ \tf$ it follows that
  \be\label{4}
  \begin{split}
       & \lVert \delta Z'_{s, t} - \delta \bar Z'_{s, t} \rVert_m \\
        &\quad  \lesssim \| \delta \tF'_{s, t} \|_\infty \lVert X_t \rVert_{m} + \|\tF'_s\|_\infty \|\delta X_{s,t}\|_m +  
        \| \delta \tF_{s, t} \|_\infty \lVert X'_t \rVert_{m} + \|\tF_s\|_\infty \|\delta X'_{s,t}\|_m   \\
        & \quad \quad + \| \delta F'_{s, t} \|_\infty \lVert \tX_t \rVert_{m}  + \| F'_s\|_\infty \|\delta \tX_{s,t}\|_m +
  \| \delta F_{s, t} \|_\infty \lVert \tX'_t \rVert_{m} + \| F_s\|_\infty \|\delta \tX'_{s,t}\|_m + \| \delta \tf'_{s,t}\|_m \\
        & \quad \lesssim \lVert \delta \tilde X_{s, t} \rVert_m + (\Gamma_{s, t} + \theta)(t - s)^{ \beta'}. 
  \end{split}
  \ee
\end{itemize}   
    
Using \eqref{deltaJ-decom}, \eqref{1}, \eqref{2}, \eqref{3}, \eqref{4} and \eqref{eq:tildeX}, we get that
    \begin{equation*}
        \lVert  \delta J_{s, u, t} \rVert_m \lesssim \lVert J_{s, u} \rVert_m(t - s)^\alpha + (\Gamma_{s, t} + \theta) (t - s)^{\alpha + \beta}
    \end{equation*}
    and that
    \begin{equation*}
        \lVert \E_s \delta J_{s, u ,t} \rVert_m \lesssim \lVert J_{s, u} \rVert_m(t - s)^\alpha + (\Gamma_{s, t} + \theta) (t - s)^{\alpha + \beta + \beta'}.
    \end{equation*}

{On the other hand,  
$$
J_{s,t}= \left[\int_s^t Z_r d\bm \eta_r - (Z_s \delta \eta_{s, t} + Z'_s \mn_{s, t} ) \right]  - \left[ \int_s^t \bar Z_r d\bm {\bar \eta}_r  - (\bar Z_s \delta \bar \eta_{s, t} + \bar Z'_s \mnb_{s,t}) \right].
$$
By the Step $3$ in the proof of Proposition \ref{priorest-lrsde}, we have 
$$
\|J_{s,t}\|_m \lesssim |t-s|^{\alpha+\beta}, \ \ \|\E_sJ_{s,t}\|_m \lesssim |t-s|^{\alpha+\beta+\beta'}.
$$ }
  Then, according to Lemma \cite[Lemma 4.11]{friz2021roughito}, there exists $l > 0$ depending on the implicit constant, such that for any $\lvert t - s \rvert < l$, we have 
    \begin{equation}\label{eq:Jboundtilde}
        \lVert J_{s, t} \rVert_m \lesssim (\Gamma_{s, t} + \theta)(t - s)^{\alpha + \beta}
    \end{equation}

{\bf Step 4.} We proceed by establishing a global bound on $\Gamma_{s,t}$. Combining the above inequalities, we have   
    \begin{equation*}
        \lVert \tilde X_{s, t} \rVert_m \lesssim (\Gamma_{s, t}+\theta) (t - s)^{\frac12 - \frac1{n^\sigma}}
    \end{equation*}
    where the implicit constant depends only on $M$ and $\tilde M$. It follows from the preceding estimate that 
$$ 
\Gamma_{s, t } \le \lVert \tilde X_{s} \rVert_m +  \lVert \tilde X_{s, t} \rVert_m  \le  \lVert \tilde X_{s} \rVert_m + C_{M,\bar M} \theta l^{\frac12 - \frac1{n^\sigma}} + C_{M,\bar M} \Gamma_{s, t} l^{\frac12 - \frac1{n^\sigma}}.
$$    
By choosing $l$ small enough such that $C_{M,\bar M} l^{\frac12 - \frac1{n^\sigma}}< \frac12,$ we have for any $t-s<l$ that
$$
\Gamma_{s,t} \le 2 \lVert \tilde X_{s} \rVert_m + 2 C_{M,\bar M} \theta l^{\frac12 - \frac1{n^\sigma}}.
$$
It follows for any any partition $\mpe = \{0 = t_0 < t_1 < \dots < t_N = T\}$ with $\lvert \mpe \rvert < l$ that
    \begin{equation*}
	\Gamma_{t_i, t_{i+1}} \le 2 \lVert \tilde X_{t_i} \rVert_m + 2 C_{M,\bar M} \theta l^{\frac12 - \frac1{n^\sigma}}.
    \end{equation*}
Since
	$\Gamma_{t_0, t_{1}} \lesssim_{M,\bar M} \lVert \xi - \bxi \lVert_m + \theta l^{\frac12 - \frac1{n^\sigma}} ,$
    we conclude that
    \begin{equation*}\label{eq:Gamma_bound}
        \Gamma_{0, T} \leq \sum_{i} \Gamma_{t_i, t_{i+1}} \lesssim_{M,\bar M} \lVert  \xi -\bxi \rVert_m + \theta.
    \end{equation*}

{\bf Step 5.} The preceding estimate along with \eqref{eq:Jboundtilde} shows that for any $t-s<l$, 
$$
\|J_{s,t}\|_m \lesssim (\lVert  \xi -\bxi \rVert_m + \theta) (t-s)^{\alpha+\beta}.
$$
Combining the previous arguments, \eqref{eq:tildeX} implies 
$$
\| \delta \tX_{s,t}\|_m \lesssim (\lVert  \xi -\bxi \rVert_m + \theta ) (t-s)^{\bar \alpha}.
$$
Via a partition argument, we obtain from the above local estimate that 
$$
\|\delta \bX \|_{\bar \alpha, m, [0,T]} \lesssim \lVert  \xi -\bxi \rVert_m + \theta .
$$
Similarly, in view of \eqref{eq:ERtildeX}, we have 
$$
\|\E. R^X  - \E. \bar R^{\bar X} \|_{\bar \alpha + \beta,m,[0,T]}\lesssim \lVert  \xi -\bxi \rVert_m + \theta .
$$
Finally, we proceed as Step $5$ in the proof of Proposition \ref{priorest-lrsde}, and obtain 
    \begin{equation*}\label{eq:tildeXBound}
        \lVert \sup_{t \in [0, T]} | \tilde X_t | \rVert_m \lesssim_{M,\bar M}   \lVert \tilde \xi \rVert_m + \theta.
    \end{equation*}
\end{proof}

\subsection{Measurability and continuity w.r.t.~parameters}

Let $X^{\BBeta}$ be the solution to the rSDE \eqref{eq:linear_forward_rsde} with $\BBeta$-dependent coefficients. To define anticipative pathwise control problems, we need the measurability of the solution mapping $$X:[0,T]  \times \Omega \times  \mc^\alpha  \rightarrow \R^{d_X}$$ as a parameter-dependent stochastic process. To prove the equivalence of rough and pathwise control problems we need a strong result, namely the continuity of the solution w.r.t.~the rough driver.  



\begin{definition}\label{def:prog-meas} 
Let $(\Omega, \mathcal{F}, (\mathcal{F}_t)_{t \geq 0}, \P)$ be a filtered probability space with the filtration $(\mathcal{F}_t)_{t \geq 0}$ satisfying the usual conditions, and $(U,\MU)$ be a measurable space. A measurable map     
$$
    Y:(([0,T] \times \Omega) \times U , (\mathcal{B} ([0,T])  \otimes \mathcal{F}) \otimes  \mathcal{U} ) \mapsto (\R^d , \mathcal{B} (\mathbb{R}^d)), \quad (t,\omega,u) \mapsto Y^u(t,\omega)
$$
is called \  {\it $\U$-progressively measurable} if for any $t\in[0,T]$, its restriction on $[0,t]\times \Omega \times U$ is measurable w.r.t. $\MB([0,t]) \otimes \MF_t \otimes \mathcal{U}$. Moreover, we say that $Y$ has a {\it $\U$-progressively measurable version} $\tY$ if $\tY$ is $\U$-progressively measurable and for any $u \in U$, the processes $ \tY(\cdot, u)$ and $Y(\cdot, u)$ are indistinguishable.  
\end{definition}

To guarantee the progressive measurability of the solutions to the rSDEs \eqref{eq:linear_forward_rsde} w.r.t.~the rough path we assume that the following conditions are satisfied. 

\begin{Assumption}\label{jointly-meas.} Suppose $(m,\beta, \beta',\xi, \bar{\alpha})$ satisfies Assumption \ref{ass:m-linear-growth_lip}.  
\begin{itemize}
    \item[$(i)$] For any $\bm \eta \in \mc^\alpha,$ $(b(\cdot, \BBeta),\sigma(\cdot, \BBeta),F(\cdot, \BBeta),F'(\cdot, \BBeta),f(\cdot, \BBeta),f'(\cdot, \BBeta) )$ satisfies Assumption \ref{ass:m-linear-growth_lip}.
\item[$(ii)$] $(b,\sigma)$ and $ (F,F',f,f')$ are $\U$-progressively measurable with $(U,\MU)=(\R^{d_X} \times \mc^{\alpha}, \MB(\R^{d_X}) \otimes \MB(\mc^{\alpha}) )$ and $(\mc^{\alpha}, \MB(\mc^{\alpha}))$, respectively.
\end{itemize}
\end{Assumption}

\begin{prop}\label{prop:jt-meas}
    Let Assumption \ref{jointly-meas.} hold and let $X^{\bm \eta}$ be the solution to Equation \eqref{eq:linear_forward_rsde} for $\BBeta \in \mc^\alpha$. Then $X$ admits a $\MB(\mc^\alpha)$-progressively measurable version $\bX $.
\end{prop}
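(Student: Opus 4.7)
The strategy is Picard iteration with careful tracking of $\MB(\mc^\alpha)$-measurability at each stage, followed by an a.s.~passage to the limit. Set $\bX^{0,\bm\eta}_t \equiv \xi$ and define recursively
$$
\bX^{n+1,\bm\eta}_t := \xi + \int_0^t b_r(\bX^{n,\bm\eta}_r, \bm\eta)\, dr + \int_0^t \sigma_r(\bX^{n,\bm\eta}_r, \bm\eta)\, dW_r + \int_0^t \bigl(F_r(\bm\eta) \bX^{n,\bm\eta}_r + f_r(\bm\eta)\bigr)\, d\bm\eta_r,
$$
where on the right the three integrals are constructed as jointly measurable functions of $\bm\eta$ as described below. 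By the contraction estimate in the proof of Theorem \ref{thm:linear_sde_well_posed}, for each fixed $\bm\eta$ this sequence converges in $S^m$ to the unique solution $X^{\bm\eta}$.

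I claim by induction on $n$ that $\bX^n$ is $\MB(\mc^\alpha)$-progressively measurable. The base case is trivial. Granted the claim for $n$, the Lebesgue-integral contribution is jointly measurable by Fubini/Tonelli using Assumption \ref{jointly-meas.}(ii). The It\^o-integral contribution admits a jointly measurable version by the classical construction: approximate the integrand by piecewise-constant processes on deterministic grids whose stochastic integrals are explicit jointly measurable sums, then pass to $L^2$-limits along a subsequence via Doob's inequality and Borel-Cantelli, taking a $\limsup$ to redefine the limit measurably on the exceptional set.

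The delicate step is the rough integral. Set $(Z^n,(Z^n)') := (F\bX^n+f,\,(F\bX^n+f)')$, which by Lemma \ref{productest} lies in $\md^{\beta,\beta'}_\eta L_m$ and inherits joint measurability from $\bX^n$ and Assumption \ref{jointly-meas.}(ii). Fix once and for all the dyadic partitions $\Pi_k := \{jT2^{-k} : 0\le j\le 2^k\}$ of $[0,T]$ and form the compensated Riemann sums
$$
S^{\Pi_k}_t(\omega,\bm\eta) := \sum_{[u,v]\in\Pi_k,\, v\le t}\bigl[Z^n_u(\omega,\bm\eta)\,\delta\eta_{u,v} + (Z^n_u)'_u(\omega,\bm\eta)\,\eta^{(2)}_{u,v}\bigr],
$$
which are jointly $\MB([0,t])\otimes\MF_t\otimes\MB(\mc^\alpha)$-measurable by the induction hypothesis and the continuity of $\bm\eta\mapsto(\eta,\eta^{(2)})$. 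Proposition \ref{prop:roughinteg} applied to successive dyadic refinements yields, for each $\bm\eta$, a finite constant $C(\bm\eta)$ depending only on $\vertiii{\bm\eta}_\alpha$ and the stochastic controlled rough path norms, and some $\varepsilon>0$, such that $\|S^{\Pi_{k+1}}_t - S^{\Pi_k}_t\|_q \le C(\bm\eta)2^{-k\varepsilon}$; Borel-Cantelli then yields a.s.~convergence of $(S^{\Pi_k}_t)_k$ for every $\bm\eta$. Setting $I^{n+1}_t := \limsup_k S^{\Pi_k}_t$ produces a jointly measurable process that, for each $\bm\eta$, coincides a.s.~with $\int_0^t Z^n_r(\bm\eta)\,d\bm\eta_r$. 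Summing the three jointly measurable contributions closes the induction and delivers $\bX^{n+1}$.

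Finally, for each $\bm\eta$ the Picard iterates converge to $X^{\bm\eta}$ in $S^m$, so along a (possibly $\bm\eta$-dependent) subsequence they converge a.s.~uniformly in $t$. The process $\bX_t(\omega,\bm\eta) := \limsup_n \bX^{n,\bm\eta}_t(\omega)$ is still $\MB(\mc^\alpha)$-progressively measurable (countable $\limsup$ of such processes) and, for every $\bm\eta$, satisfies $\bX_t(\cdot,\bm\eta) = X^{\bm\eta}_t$ a.s.~for each $t$; continuity of both sides in $t$ upgrades this to indistinguishability. The main obstacle is precisely the rough integral: since it is defined only as a limit in probability with an $\bm\eta$-dependent null set, a direct Fubini argument fails. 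Fixing deterministic dyadic partitions throughout and exploiting the quantitative stochastic-sewing rates of Proposition \ref{prop:roughinteg} is what allows us to upgrade convergence in probability to pointwise-in-$\bm\eta$ a.s.~convergence along a fixed subsequence, yielding joint measurability of the rough integral and hence of $\bX$.
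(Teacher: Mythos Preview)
Your approach is the paper's: Picard iteration, preserve $\MB(\mc^\alpha)$-progressive measurability at each step (Lebesgue, It\^o, rough integral separately), then pass to the limit. The paper streamlines both limit passages---for the rough integral and for the Picard iterates---by invoking a single appendix lemma (Lemma~\ref{prog-limit}), which takes any sequence of c\`adl\`ag processes converging uniformly in time in probability for each fixed parameter value and returns a $\MU$-progressively measurable version; the key device in its proof is the construction of a \emph{measurably $\bm\eta$-dependent} fast subsequence $n_k^{\bm\eta}$ along which a.s.\ convergence holds. Your direct Borel--Cantelli treatment of the rough integral along fixed dyadic partitions is a fine alternative to this lemma at that stage.

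Your final step, however, has a gap. You define $\bX := \limsup_n \bX^n$ over the \emph{full} Picard sequence, yet you have only argued that a $\bm\eta$-dependent \emph{subsequence} converges a.s. Convergence in $S^m$ does not imply a.s.\ convergence of the full sequence, so $\limsup_n \bX^{n,\bm\eta}_t$ need not coincide with $X^{\bm\eta}_t$ on a set of full measure. To repair this you must either (i) extract a geometric rate from the contraction in Theorem~\ref{thm:linear_sde_well_posed} so that Borel--Cantelli applies to the full sequence---but the contraction holds only on intervals of length $\tau(\bm\eta)$, so the concatenation over $\bm\eta$-dependent subintervals must itself be shown to preserve joint measurability---or (ii) construct the $\bm\eta$-dependent subsequence measurably, which is exactly the content of Lemma~\ref{prog-limit}.
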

\begin{proof} 
By Theorem \ref{thm:linear_sde_well_posed}, local solutions to our rSDE are constructed by Picard iteration in a space of $\eta$-stochastic controlled rough paths, starting from the process 
$$
(t,\ome) \mapsto (\xi   +(F_0 \xi+f_0)\delta \eta_{0,t},F_0 \xi+f_0 ) =: (X^{ (0)}_t(\BBeta), (X^{ (0)}(\BBeta))'_t )(\ome)=: \mathcal{X}^{ (0)}_t(\BBeta,\ome)  
$$ 
and then inductively define 
$$
\mathcal{X}^{(n+1)}(\BBeta):= (X^{ (n+1)}(\BBeta),\ (X^{ (n+1)}(\BBeta))' ):= (X^{ (n+1)}(\BBeta),\  F(\BBeta)X^{ (n)}(\BBeta) + f (\BBeta) ), 
$$
where
$$
X^{ (n+1)}(\BBeta):=\  \xi + \int_0^t b_r(X^{ (n)}_r(\BBeta) , \BBeta)dr + \int_0^t \sigma_r(X^{ (n)}_r(\BBeta) , \BBeta)dB_r
 +\int_0^t  \left( F_r(\BBeta)X^{ (n)}_r(\BBeta) + f_r (\BBeta)   \right) d\BBeta_r.	
$$

According to Theorem \ref{thm:linear_sde_well_posed}, $ \mathcal{X}^{ (n)}(\BBeta)
$ converges to  $\mathrm{X}^{\BBeta}$, uniformly on $[0,T_0]$ in probability for small $T_0$. Since $T_0$ independent of the initial condition, one can easily repeat the following argument and obtain the measurability on the whole interval $[0,T]$. 

We show by induction that the mapping $(t,\omega, \BBeta) \mapsto   \mathcal{X}^{(n)} (t, \omega; \BBeta)$ has a $\MB(\mc^\alpha)$-progressively measurable version. This is obvious if $n=0$. 
For the induction step, we need to consider a Lebesgue integral $\int b (...) dt$, an It\^o integral $\int \sigma (...) dB$ and a rough path integral $\int (...  ) d \BBeta$. In all three cases, the integrand $ (...)$ is $\MB(\mc^\alpha)$-progressively measurable. The desired measurability of the first two integrals then follows from 
Proposition \ref{SY78-Prop5} in the appendix. It remains to show that the mapping
$$
      (t, \omega, \BBeta) \mapsto \int_0^t \left(  F_r(\BBeta)X^{ (n)}_r + f_r (\BBeta)  \right) d\BBeta_r,
$$ 
has a $\MB(\mc^\alpha)$-progressively measurable version. To this end, we recall that the integral 
is defined as the uniform in time limit in $P$-probability of the Riemann-Stieltjes sums 
$$
I^{\Pi,(\BBeta)}_t:= \sum_{[u,v] \in \Pi, u \le t} \left( F_u(\BBeta)X^{(n)}_u + f_u (\BBeta) \right) \eta_{u,v \wedge t} +  \left( F'_u(\BBeta)X^{(n)}_u + F_u(\BBeta)X'^{(n)}_u + f'_u (\BBeta) \right) \eta^{(2)}_{u,v \wedge t}.
$$
The process $I^{\Pi,(\BBeta)}$ is $\MB(\mc^\alpha)$-progressively measurable. By Lemma \ref{prog-limit} with $(U,\MU )  $ in
the lemma taken as $(\mc^\alpha, \MB(\mc^\alpha) )$, we see that the limit $\int_0^.(...) d\BBeta$ has a $\MB(\mc^\alpha)$-progressively measurable version. 

It follows that $X^{(n)}$ has a $\MB(\mc^\alpha)$-progressively measurable version $\bX^{(n)}$ and $  \bX^{(n)}(\cdot , \BBeta)$ converges to $X^{\BBeta}$ uniformly in time in $\P$-probability. The assertion hence follows from Lemma \ref{prog-limit}.
\end{proof}


{In what follows, we identify the solution to our rSDE with its $\MB(\mc^{\alpha})$-progressively measurable version}. The progressive measurability of the solution to our rough SDE allows us to define pathwise control problems with anticipative controls. Continuity of the solution is required to prove their equivalence to rough control problems. 

\begin{Assumption}\label{ass:jointly-conti.} 
    Suppose $(m,\beta, \beta',\xi, \bar{\alpha})$ satisfies Assumption \ref{ass:m-linear-growth_lip}. Moreover, assume that:
    \begin{itemize}
    \item[$(i)$.] For any $\bm \eta \in \mc^\alpha,$ $(b(\cdot, \BBeta),\sigma(\cdot, \BBeta),F(\cdot, \BBeta),F'(\cdot, \BBeta),f(\cdot, \BBeta),f'(\cdot, \BBeta) )$ satisfies Assumption \ref{ass:m-linear-growth_lip}. Moreover, $(b,\sigma)(\cdot,\mathbf{0} )$ satisfies \eqref{ine:Kg} with some $(K^{b,\mathbf{0}},K^{\sigma,\mathbf{0}}) \in H^{m, n^b} \times H^{m,n^{\sigma}}$.
    
\item[$(ii)$.] There exists a modulus function (i.e. an increasing function which is continuous at zero with initial value zero) $\Psi:[0,\infty) \rightarrow [0,\infty),$ such that for any $(t,\ome,x) \in [0,T]\times \Omega \times \R^{d_X},$ $\varphi \in \{b,\sigma\}$, and any $\BBeta, \bBBeta \in \mc^{\alpha},$
\begin{align}\label{ine:lip-bs}
&|\varphi (t,\ome,x,\BBeta) - \varphi(t,\ome,x,\bBBeta)|   \lesssim \Psi( \rho_{\alpha}(\BBeta, \bBBeta) + |\eta_0- \bar\eta_0| );\\ \label{ine:lip-Ff}
& d_{\eta, \bar \eta, \beta,\beta',\infty} \left((F,F')(\BBeta), (   F,   F')(\bBBeta) \right)+ d_{\eta, \bar \eta, \beta,\beta',m} \left((f,f')(\BBeta) , (  f,   f')(\bBBeta) \right) \lesssim \Psi( \rho_{\alpha}(\BBeta, \bBBeta) + |\eta_0- \bar\eta_0| ).
\end{align}
\end{itemize}

\end{Assumption}

\begin{example}
The controlled rough paths $(F,F'), (f,f')$ introduced in Example \ref{exam:controlledrp} satisfy the above condition. If $G:\R \rightarrow \R$ is a bounded smooth function as in Example \ref{exam:controlledrp} (i), and $(F,F')(t,\BBeta):= (G(\eta_t),D_x G(\eta_t) )$, a standard computation shows that $(F,F')$ satisfies \eqref{ine:lip-Ff}. 
\end{example}


The following corollary is a direct consequence of Proposition \ref{prop:continuity_of_sol_x}.

\begin{cor}\label{coro:ct-cts}
  Suppose that Assumption \ref{ass:jointly-conti.} holds. Let $X^{\BBeta}$ be the solution to \eqref{eq:linear_forward_rsde} driven by $\BBeta \in \mc^{\alpha}$. Then the mapping $X^{\BBeta}: \mc^{\alpha} \rightarrow S^m$ is continuous. In particular, the function 
$$
    J(\BBeta):= \E \left[g(X^{\BBeta}_T)+ \int_0^T h_r(X^{\BBeta}_r) dr \right]
$$ 
is continuous on $\mc^{\alpha}$ if the functions $g:\Omega \times  \R^{d_{X}} \rightarrow \R,$ $h:[0,T] \times \Omega \times  \R^{d_{X}} \rightarrow \R$ are such that $h(\cdot,x) \in H^1([0,T];\R)$, $g(x) \in L^1(\R)$ for any $x\in \R^{d_{X}}$, and $(h,g)(t,\cdot)$ is locally Lipschitz continuous in the sense that there exists a constant $C>0$ such that for any $t \in[0,T]$, $x,y \in  \R^{d_{X}},$
$$
|g(x)-g(y)|+ |h(t,x)-h(t,y)|< C(1+|x|+|y|)|x-y|, \ \ \ a.s.
$$


\end{cor}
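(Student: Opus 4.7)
The plan is to reduce the continuity assertion to a direct application of the It\^o--Lyons continuity estimate in Proposition \ref{prop:continuity_of_sol_x} and then upgrade to continuity of $J$ by a routine Lipschitz estimate.

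\textbf{Step 1 (Locally uniform bounds on the coefficients).} Fix $\BBeta \in \mc^{\alpha}$ and let $\bBBeta$ range over a bounded neighborhood $\mathcal{N}$ of $\BBeta$ in $\mc^{\alpha}$. I first verify that, in Proposition \ref{prop:continuity_of_sol_x}, the constants $M$ and $\bar M$ bounding the data of the two rSDEs can be taken uniformly on $\mathcal{N}$. Indeed, writing
\[
    |b(t,\omega,x,\bBBeta)| \le |b(t,\omega,x,\mathbf{0})| + \Psi(\rho_\alpha(\bBBeta,\mathbf{0}) + |\bar\eta_0|) \le |K^{b,\mathbf{0}}_t| + |x| + \Psi(\rho_\alpha(\bBBeta,\mathbf{0}) + |\bar\eta_0|),
\]
by \eqref{ine:lip-bs} and Assumption \ref{ass:jointly-conti.}(i), and arguing analogously for $\sigma$, a uniform choice of $K^b, K^\sigma \in H^{m,n^b} \times H^{m,n^\sigma}$ is available on $\mathcal{N}$. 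Similarly, \eqref{ine:lip-Ff} gives uniform control of $\vertiii{\bBBeta}_\alpha$, $K$, $\|\bar f_0\|_m$ and $\|(\bar f,\bar f')\|_{\beta,\beta',m}$ for $\bBBeta \in \mathcal{N}$.

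\textbf{Step 2 (Apply It\^o--Lyons and send each term to zero).} Next, I apply Proposition \ref{prop:continuity_of_sol_x} with identical initial condition $\xi = \bar\xi$, the same parameters $(\beta,\beta',m,n^b,n^\sigma,K)$, and uniform constants $M, \bar M$ from Step 1. This yields
\[
\begin{split}
    \lVert X^{\BBeta} - X^{\bBBeta} \rVert_{S^m}
    &\lesssim \rho_\alpha(\BBeta,\bBBeta)
    + \lVert b(\cdot,X^{\bBBeta}_\cdot,\BBeta) - b(\cdot,X^{\bBBeta}_\cdot,\bBBeta) \rVert_{H^{m,n^b}} \\
    &\quad + \lVert \sigma(\cdot,X^{\bBBeta}_\cdot,\BBeta) - \sigma(\cdot,X^{\bBBeta}_\cdot,\bBBeta) \rVert_{H^{m,n^\sigma}} \\
    &\quad + d_{\eta,\bar\eta,\beta,\beta',\infty}\big((F,F')(\BBeta),(F,F')(\bBBeta)\big)
    + d_{\eta,\bar\eta,\beta,\beta',m}\big((f,f')(\BBeta),(f,f')(\bBBeta)\big).
\end{split}
\]
The last two terms tend to $0$ as $\bBBeta \to \BBeta$ by \eqref{ine:lip-Ff}. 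For the drift/diffusion terms, \eqref{ine:lip-bs} gives the pointwise estimate $|b_t(\omega,X^{\bBBeta}_t,\BBeta) - b_t(\omega,X^{\bBBeta}_t,\bBBeta)| \le \Psi(\rho_\alpha(\BBeta,\bBBeta) + |\eta_0 - \bar\eta_0|)$, which is deterministic and independent of $(t,\omega)$; integrating gives an $H^{m,n^b}$-bound proportional to $\Psi(\cdot) T^{1/n^b}$ and similarly for $\sigma$. Since $\Psi$ is a modulus, all these terms vanish as $\bBBeta \to \BBeta$, proving that $\BBeta \mapsto X^{\BBeta}$ is continuous from $\mc^{\alpha}$ to $S^m$.

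\textbf{Step 3 (Continuity of $J$).} Using the Lipschitz continuity of $g$ and $h_r$ (with a common Lipschitz constant $L$, possibly depending on $r$ but handled by integrability of $h(\cdot,0)$) together with $m \ge 2 \ge 1$, Jensen's inequality yields
\[
    |J(\BBeta) - J(\bBBeta)| \le L \lVert X^{\BBeta}_T - X^{\bBBeta}_T \rVert_1 + L \int_0^T \lVert X^{\BBeta}_r - X^{\bBBeta}_r \rVert_1 \, dr \lesssim_T \lVert X^{\BBeta} - X^{\bBBeta} \rVert_{S^m},
\]
which tends to $0$ by Step 2. The integrability of $J(\BBeta)$ itself follows from the bound $|g(x)| + |h_r(x)| \lesssim |h_r(0)| + |g(0)| + |x|$ together with $X^{\BBeta} \in S^m$.

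\textbf{Main obstacle.} The one point requiring genuine care is Step 1: Proposition \ref{prop:continuity_of_sol_x} is only useful once $M$ and $\bar M$ are locally uniformly controlled, because the implied constant depends on them. Everything else then reduces to reading off vanishing estimates from Assumption \ref{ass:jointly-conti.}(ii).
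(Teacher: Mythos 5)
The paper states Corollary \ref{coro:ct-cts} without proof; it is presented as an immediate consequence of Proposition \ref{prop:continuity_of_sol_x} (It\^o--Lyons continuity) together with Assumption \ref{ass:jointly-conti.}, and your proposal supplies exactly the argument the paper intends: verify that the constants $M,\bar M$ are locally uniform so the implied constant in Proposition \ref{prop:continuity_of_sol_x} stays controlled, then observe that every term on the right-hand side tends to zero via \eqref{ine:lip-bs} and \eqref{ine:lip-Ff}, and finally pass from $S^m$-continuity of $X^{\BBeta}$ to continuity of $J$ by the Lipschitz hypothesis on $(h,g)$. Steps 1 and 2 are correct, and Step 1 rightly identifies the only real thing to check.

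One small imprecision in Step 3: you write ``a common Lipschitz constant $L$, possibly depending on $r$ but handled by integrability of $h(\cdot,0)$,'' which is both internally inconsistent and not actually justified --- the condition $h(\cdot,0)\in L^1$ gives no control over the $r$-dependence of the Lipschitz constant of $h(r,\cdot)$, and if that constant were not integrable in $r$ the bound $\int_0^T L(r)\,\lVert X^{\BBeta}_r-X^{\bBBeta}_r\rVert_1\,dr$ could be infinite. The cleanest reading of the corollary's hypothesis is that the Lipschitz constant is uniform (or at least integrable) in $t$; with that reading Step 3 goes through exactly as you wrote it, via Jensen and $\lVert\cdot\rVert_1\le\lVert\cdot\rVert_m\le\lVert\cdot\rVert_{S^m}$. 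I would simply state that uniformity explicitly rather than appeal to $h(\cdot,0)\in L^1$.
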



\section{SMP for affine rough controlled stochastic systems}\label{sec:doss-sussmann}

In this section, we introduce rough stochastic control problems and establish a Stochastic Maximum Principle (SMP) for rough control problems when the rough driver is affine in the state variable. Using an affine rough \emph{Doss-Sussmann transformation} we show that the rough SMP can be reduced to a SMP for standard stochastic control problems. 

Recall that $\mc^{0,\alpha}_g$ is the geometric rough path space, and let $\MB(\mc^{0,\alpha}_g)$ be its Borel $\sigma$-algebra. Here we always assume $\bm \eta = (\eta, \mn) \in \mc^{0, \alpha}_g$ is a geometric rough path. We also assume that controls take values in a convex subset $V$ of a finite-dimensional space with norm $|\cdot |$. The set of admissible controls is denoted by 
\be\label{def:roughcont}
\MA:= \{u:[0,T] \times \Omega \rightarrow V, \text{ progressively measurable,} \ \sup_{t }\E[ |u_t|^p ] <\infty , \ \forall p \ge 2  \}.
\ee

\begin{rem}
    The integrability of controls  guarantees integrability of the coefficients so Proposition \ref{roughito} is applicable. It is possible to relax the integrability by carefully revisiting the proof of \cite[Proposition 2.15]{BCN24}. 
\end{rem}


For any admissible control $u \in \MA$ and initial condition $x_0 \in \R^{d_X}$ we consider the controlled rough SDE\footnote{We could allow for $\BBeta$-dependent coefficients $(b,\sigma)$. We prefer not to do this to simplify the notation.} 
\begin{equation}\label{eq:dynamics_pmp_rough}
    dX^{\bm \eta,   u}_t = b_t(X^{\bm \eta,   u}_t,  u_t   )dt + \sigma_t(X^{\bm \eta,  u}_t, u_t ) dW_t + (F_t(\BBeta) X^{\bm \eta,   u}_t + f_t(\BBeta)) d\bm{\eta}_t, \quad X_0 = x_0
\end{equation}
and introduce the cost functional 
\begin{equation}\label{eq:objective_pmp_rough}
    J (  u; \BBeta) = \E \left[ \int_0^T h_t(X^{\bm \eta,   u}_t, u_t)dt + g(X^{\bm \eta,   u}_T) \right].
\end{equation}
The corresponding value function is denoted
\begin{equation}\label{eq:vf}
   \MV({\BBeta}):=  \inf_{u \in \MA} J(u; \BBeta).
\end{equation}

In Section \ref{sec:ds-trans} and \ref{sec:rSMP}, $\BBeta$ is again fixed, so we omit it in the coefficients $( F,F', f,f').$ The following is our standing assumption, which we assume to hold through Section \ref{sec:ds-trans} and \ref{sec:rSMP}.

\begin{Assumption}\label{ass:control_assumptions}
Let $(\Omega,\mathcal{F},\mathbb{P})$ be a probability space carrying a Brownian motion $W$. Denote by $(\mathcal{F}_t)_{t\in[0,T]}$ the $\mathbb{P}$-augmentation of the filtration generated by $W$.
Let $\beta \in  (\frac{1}{4}, \alpha]$ and $\beta' \in (0, \beta]$ satisfy $\alpha+\beta> \frac12 $ and $\alpha+\beta +\beta'>1.$ We assume that 
$$
    (F, F') \in \D^{\beta,\beta'}_\eta L_{\infty} \quad \mbox{and}  \quad (f, f') \in \md^{\beta,\beta' }_\eta L_{\infty}.
$$
Moreover, the parameters $(b, \sigma, h, g)$ satisfy the following conditions: 
\begin{itemize}
\item[$(i)$] The function $(b,\sigma): [0, T] \times \Omega \times \R^{d_X} \times V \rightarrow \R^{d_X} \times \R^{d_W \times d_X}  $ is measurable, and 
\begin{itemize}
    \item for any $(t,\omega)$ the function $(b,\sigma )(t,\ome,\cdot)$ is continuously differentiable on $\R^{d_X} \times V$ with uniformly (in $(t,\ome,x,v)$) bounded derivatives; 
    \item for all $(x,a)$ the function $(b,\sigma)(\cdot, x,a)$ is progressively measurable and so are all its derivatives; 
    \item the function $(b,\sigma)(t,\ome, 0,0) $ is uniformly bounded.
\end{itemize}

\item[$(ii)$]  The functions $g:  \Omega \times \R^{d_X}  \rightarrow \R$ and $h:[0, T] \times \Omega \times \R^{d_X} \times V \rightarrow \R$ are measurable and
    \begin{itemize}
    \item for any $(x,u)$ the function $h(\cdot, x,u)$ is progressively measurable and $h(\cdot, 0,0) $ is uniformly bounded; 
    \item for any $(t,\ome)$ the functions $g(\ome, \cdot)$ and $ h(t,\ome,\cdot)$ are continuously differentiable on $\R^{d_X}$ and $\R^{d_X} \times V$, respectively and there exists a constant $C>0,$
$$
\lvert \partial_u h(t, x, u) \rvert + \lvert \partial_x h(t, x, u) \rvert \le C( 1 + \lvert x \rvert + \lvert u \rvert) ,\ \ \lvert \partial_x g(x) \rvert \le C (  1 + \lvert x \rvert), \  \ a.s.
$$
\end{itemize}
    
\end{itemize}
\end{Assumption}

Under the above assumption the conditions of Theorem  \ref{thm:linear_sde_well_posed} hold and thus the state dynamics \eqref{eq:dynamics_pmp_rough} is well-posed and the unique solution satisfies
$$
    (X^{\bm \eta,   u}, F X^{\bm \eta,   u} + f) \in \bigcap_{m=2}^{\infty} \BD_{\eta}^{\alpha,\beta}L_m 
$$    

\begin{example}[Linear Quadratic Control Problem]\label{exam:LQ1}
The benchmark case of a LQ problem corresponds to the parameters
\[    
    b_t(x, u) = \tilde A_t x + \tilde B_t u + \tilde b_t, \quad 
    \sigma_t(x, u) = \tilde C_t x + \tilde D_t u + \tilde \sigma_t
\]
and
\[    
    h_t(x, u) = \frac{1}{2}(x^{\top} \tilde{M}_t x + u^{\top} \tilde{N}_t u), \quad g(x) = x^{\top} \tG x 
\]    
    where $\tilde A$, $\tilde B$, $\tilde C$, $\tilde D$, $\tilde M$, $\tilde N$, $\tilde b$, $\tilde \sigma$ are uniformly bounded and progressively measurable processes, $\tilde M$ and $\tilde N$ positively definite matrices, and $\tG$ is a bounded $\MF_T$-measurable random variable. 
\end{example}


\subsection{The rough Doss-Sussmann transformation}\label{sec:ds-trans}

We are now going to rewrite the rough control problem as a standard control problem using the Doss-Sussmann transformation $x \mapsto \phi_\cdot(x)$ where $\phi$ denotes the solution to the (stochastic) rough ODE  
\begin{equation}\label{eq:linear_state_transformation}
	\phi_t(x) = x + \int_0^t (F_t \phi_t(x) + f_t) d\bm \eta_t,  \ \ x\in \R^d.
\end{equation}

In view of Theorem \ref{thm:linear_sde_well_posed}, the above equation is well posed and the solution mapping is affine in the initial state. 

\begin{lemma}\label{lem:existence_of_transformation}
Under Assumption \ref{ass:control_assumptions} equation \eqref{eq:linear_state_transformation} has a unique solution $\phi(x)$ in $\BD_\eta^{  \alpha, \beta}L_m$ for any $m \in [2,\infty)$ and the random mapping $x \mapsto \phi_t(x)$ is affine and invertible for all $t \in [0, T]$. In particular, there exist 
$$
    (A,A') \in  \D_\eta^{\alpha,\beta}L_\infty , \ (\zeta,\zeta') \in \BD_\eta^{\alpha,\beta}L_m, 
$$    
such that 
$$
    \phi_t(x ) = A_t x + \zeta_t \quad \mbox{and} \quad
    \phi^{-1}_t(x)=A^{-1}_t x - A^{-1}_t \zeta_t, 
$$    
where $(A^{-1}, A^{-1, \prime} )\in \D_\eta^{\alpha,\beta}L_\infty$ and $\P$-a.s.~$A^{-1}A(t,\ome)=I_{d_X \times d_{X} }$ for all $t \in [0,T].$
\end{lemma}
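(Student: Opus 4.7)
The plan is to construct $\phi$ explicitly as an affine function in $x$ by solving two auxiliary linear rough equations, and to invoke Theorem~\ref{thm:linear_sde_well_posed} and Corollary~\ref{coro:linear} to obtain the required regularity. Existence and uniqueness of $\phi(x)$ in $\BD_\eta^{\alpha,\beta}L_m$ for every $m \in [2,\infty)$ is immediate from Theorem~\ref{thm:linear_sde_well_posed} applied with $b=\sigma=0$, deterministic initial condition $\xi = x$, and the given $(F,F',f,f')$. The key structural fact we exploit is linearity of the rough equation in the state.

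Next, let $A$ solve the matrix-valued linear RDE
\[
    A_t = I_{d_X} + \int_0^t F_s A_s\, d\bm{\eta}_s.
\]
Read row by row, this is a collection of $d_X$ decoupled rough equations that fall under Corollary~\ref{coro:linear} (with $b=0$), giving $(A, FA) \in \D_\eta^{\alpha,\beta}L_\infty$ together with a uniform bound on $A$. Independently, let $\zeta$ solve $\zeta_t = \int_0^t (F_s \zeta_s + f_s)\, d\bm\eta_s$ with $\zeta_0 = 0$; Theorem~\ref{thm:linear_sde_well_posed} yields $(\zeta, F\zeta + f) \in \BD_\eta^{\alpha,\beta}L_m$ for every $m \geq 2$. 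By linearity of the rough integral, $t \mapsto A_t x + \zeta_t$ satisfies the same equation as $\phi_t(x)$, and the uniqueness assertion of Theorem~\ref{thm:linear_sde_well_posed} forces $\phi_t(x) = A_t x + \zeta_t$.

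For invertibility, define $B$ as the unique solution to the ``adjoint'' linear RDE
\[
    B_t = I_{d_X} - \int_0^t B_s F_s\, d\bm{\eta}_s;
\]
transposing fits Corollary~\ref{coro:linear}, so $(B, -BF) \in \D_\eta^{\alpha,\beta}L_\infty$. Viewing $(B,A)$ as a single joint $\eta$-controlled rough path with Gubinelli derivative $(-BF, FA)$ and trivial drift/Brownian part, I apply the rough It\^o formula (Proposition~\ref{roughito}) to the smooth polynomial $\Phi(x,y) = xy$. Since there is no Brownian contribution, the trace term vanishes and the formula reduces to the rough Leibniz identity
\[
    d(B_t A_t) = (dB_t) A_t + B_t (dA_t) = -B_t F_t A_t\, d\bm\eta_t + B_t F_t A_t\, d\bm\eta_t = 0,
\]
so $B_t A_t = B_0 A_0 = I_{d_X}$, whence $A_t^{-1} = B_t$ with the asserted regularity. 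The identity $\phi_t^{-1}(y) = A_t^{-1} y - A_t^{-1} \zeta_t$ follows from the affine form $\phi_t(x) = A_t x + \zeta_t$ by direct inversion.

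The main technical obstacle I anticipate is the rigorous invocation of the rough Leibniz identity above. To apply Proposition~\ref{roughito} to the bilinear map $\Phi(x,y) = xy$, I must first assemble $(B,A)$ into a single $(\R^{d_X \times d_X})^2$-valued controlled rough path, identify its joint Gubinelli derivative, and verify that the component-wise rough integrals against $\bm{\eta}$ recover the integrals defining $B$ and $A$ separately. This is a standard bookkeeping exercise once the controlled-rough-path structure of the pair is set up, and it requires no estimates beyond those established in Section~\ref{sec:forward_rSDE}.
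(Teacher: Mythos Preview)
Your proposal is correct and follows essentially the same route as the paper: define $A$, $\zeta$, and $B=A^{-1}$ by the same linear rough equations, invoke Corollary~\ref{coro:linear} and Theorem~\ref{thm:linear_sde_well_posed} for well-posedness and regularity, and verify $B_tA_t\equiv I$ via the rough It\^o/Leibniz formula (Proposition~\ref{roughito}), exactly as the paper does. One cosmetic slip: the columns of $A$, not the rows, decouple under $dA_t=F_tA_t\,d\bm\eta_t$, since $F$ multiplies on the left.
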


\begin{proof}
To show that the mapping $x \mapsto \phi_t(x)$ is affine, let $A$ and $\zeta$ be the solutions of linear stochastic RDE 
\begin{align}\label{eq:A}
        &dA_t(\cdot) = F_t   A_t(\cdot)   d\bm \eta_t   , \ \ \ \ \ \  A_0 = I_{d_X \times d_X} , \\ \label{eq:B}
        &d\zeta_t = F_t  \zeta_t   d{\bm \eta_t}  + f_t d\bm \eta_t, \ \ \ \  \zeta_0 = 0,
\end{align}    
respectively. 
In view of Corollary \ref{coro:linear} and Theorem \ref{thm:linear_sde_well_posed}, the above equations are well posed with solutions 
$$
    (A,FA) \in   \D_\eta^{\alpha,\beta}L_\infty, \quad \mbox{and} \quad 
    (\zeta, F\zeta+f) \in \BD_\eta^{\alpha,\beta}L_m. 
$$    
We claim (i) that $A_t$ is invertible and the inverse $A^{-1}$ satisfies
\begin{equation}\label{eq:inver-matrix}
    dA^{-1}_t(\cdot) =- A^{-1}_t  F_t (\cdot)  d \bm \eta_t    ,\ \ \  A_0^{-1} = I_{d_X \times d_X}.
\end{equation}
and (ii) that $\phi_t(\xi)=A_t \xi + \zeta_t.$ It is then easy to see that $\phi_t$ admits the inverse 
$$
    \phi^{-1}(x)=A^{-1}_t x - A^{-1}_t \zeta_t.
$$
To prove (i) we apply the rough It\^o's formula \eqref{eq:roughito} to the process $ A^{-1}  A x$ for any $x\in \R^d$. We obtain that  
\begin{equation*}
    d A^{-1}_t A_tx = - A^{-1}_t F_t (A_tx) d\bm \eta_t  + A^{-1}_t F_t A_t (x) d \bm \eta_t  =0,
\end{equation*}
from which we conclude that $A^{-1}_t \cdot A_t \equiv I$. To prove the second claim, we set $\psi_t(x):= A_t x + \zeta_t$. Then, 
	\begin{equation*}
	    d \psi_t(x)= dA_t x + d\zeta_t= F_t \psi(\xi ) d \bm \eta_t +   f_t d  \BBeta_t.
	\end{equation*}
Since the rSDE \eqref{eq:linear_state_transformation} admits a unique solution, we conclude that $\phi=\psi.$
\end{proof}

To rewrite the rough control problem as a standard problem, let $ X^{  u}:= X^{\bm {\eta}, u }$ be the unique solution to the rough SDE \eqref{eq:dynamics_pmp_rough}, and $\tX^{  u}$ be the unique solution to the standard SDE
\begin{equation}\label{eq:transformed_dynamics}
	d\tX_t = \tb_t(\tX_t, u_t)dt + \tilde \sigma_t(\tX_t, u_t)dW_t, \quad \tX_0 = x_0
\end{equation}
with (stochastic) coefficients 
\be\label{eq:trans-coeff}
\tb_t(\tx, u) \defeq A_t^{-1} b_t(\phi_t(\tx), u),\ \ \ \tilde \sigma_t(\tx, u) \defeq A_t^{-1} \sigma_t (\phi_t(\tx), u).
\ee


\begin{lemma}\label{lem:transform}
	Suppose that Assumption \ref{ass:control_assumptions} holds. Let $X^{ u}$ be the solution to \eqref{eq:dynamics_pmp_rough} and let $\tX^{ u}$ be the solution to \eqref{eq:transformed_dynamics}. Then $\P$-a.s.  
 \[
    X^{ u}_t = \phi_t(\tX^{ u}_t) = A_t {\tX}^{ u}_t + \zeta_t, 
\]
    where $\phi$ is the unique solution to equation \eqref{eq:linear_state_transformation} and $A$ and $\zeta$ are as in Lemma \ref{lem:existence_of_transformation}. 
    Equivalently,  
$$    
    \tX^{  u}_t = \phi^{-1}_t(X^{  u}_t).
$$
\end{lemma}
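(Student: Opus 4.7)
The key observation is that $\phi_t(x) = A_t x + \zeta_t$ by Lemma~\ref{lem:existence_of_transformation}, so the claim reduces to showing that
\[
	Y_t \defeq A_t \tX^{u}_t + \zeta_t
\]
solves the controlled rough SDE \eqref{eq:dynamics_pmp_rough}; uniqueness from Theorem~\ref{thm:linear_sde_well_posed} then forces $Y = X^{u}$ a.s. The plan is therefore to compute $dY_t$ using the rough It\^o product formula and to check that the resulting coefficients match exactly those of \eqref{eq:dynamics_pmp_rough}.

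First I would record the dynamics of the three building blocks: $A$ solves the linear RDE \eqref{eq:A}, so $(A, A') = (A, FA) \in \D^{\alpha,\beta}_\eta L_\infty$; $\zeta$ solves \eqref{eq:B}, so $(\zeta, \zeta') = (\zeta, F\zeta + f) \in \BD^{\alpha,\beta}_\eta L_m$ for every $m$; and $\tX^{u}$ solves the standard SDE \eqref{eq:transformed_dynamics}, hence it is an $\eta$-stochastic controlled rough path with trivial Gubinelli derivative $(\tX^u)' \equiv 0$. The required $L_m$-integrability of $\tX^u$ (for every $m$) follows from standard SDE estimates using the boundedness of the derivatives of $(b,\sigma)$ and the integrability of $u$ built into the definition \eqref{def:roughcont} of $\MA$, together with the boundedness of $A^{-1}$ provided by Lemma~\ref{lem:existence_of_transformation}.

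Next I would apply Proposition~\ref{roughito} to the product $A_t \tX^{u}_t$ (componentwise; this is the rough It\^o formula with $f(a,x) = ax$, viewed on the joint controlled rough path $(A, \tX^u)$). Since $\tX^u$ carries no rough component and the Brownian martingale part of $A$ vanishes, no Brownian--rough cross term and no It\^o correction arises, and one obtains the expected product rule
\begin{align*}
	d(A_t \tX^{u}_t) &= A_t \, d\tX^{u}_t + (dA_t)\, \tX^{u}_t\\
	&= A_t \tb_t(\tX^{u}_t, u_t)\,dt + A_t \tsig_t(\tX^{u}_t, u_t)\,dW_t + F_t A_t \tX^{u}_t \, d\bm\eta_t.
\end{align*}
Adding $d\zeta_t = (F_t \zeta_t + f_t)\, d\bm\eta_t$ and using the identities $A_t \tb_t(\tX^{u}_t,u_t) = b_t(\phi_t(\tX^{u}_t),u_t)$ and $A_t \tsig_t(\tX^{u}_t,u_t) = \sigma_t(\phi_t(\tX^{u}_t),u_t)$ from \eqref{eq:trans-coeff}, together with $F_t A_t \tX^{u}_t + F_t \zeta_t + f_t = F_t Y_t + f_t$, I would arrive at
\[
	dY_t = b_t(Y_t, u_t)\,dt + \sigma_t(Y_t, u_t)\,dW_t + (F_t Y_t + f_t)\, d\bm\eta_t, \qquad Y_0 = x_0.
\]

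The main technical point is to justify the rough product rule rigorously within the setting of Proposition~\ref{roughito}. Concretely, one must check that the joint process $(A, \tX^u)$ admits a Gubinelli expansion of the form required by the proposition, i.e.\ that it lies in $\BD^{\beta,\beta'}_\eta L_p$ for all $p$ with derivatives $(FA, 0)$ and second-order remainders controlled via Lemma~\ref{productest}, and that the polynomial growth hypothesis on the test function $f(a,x) = ax$ (with $q = 1$) is compatible with these integrability bounds. Once this is in place, the argument concludes by invoking uniqueness for \eqref{eq:dynamics_pmp_rough} from Theorem~\ref{thm:linear_sde_well_posed}, which identifies $Y$ with $X^u$; inverting $\phi_t$ via Lemma~\ref{lem:existence_of_transformation} then gives $\tX^u_t = \phi_t^{-1}(X^u_t)$.
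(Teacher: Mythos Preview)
Your proposal is correct and follows essentially the same approach as the paper: set $Y_t = \phi_t(\tX^{u}_t) = A_t \tX^{u}_t + \zeta_t$, apply the rough It\^o formula (Proposition~\ref{roughito}) to the product using $(A,FA)$ and $(\tX^u,0)$ as controlled rough paths, verify that $Y$ solves \eqref{eq:dynamics_pmp_rough}, and conclude by uniqueness from Theorem~\ref{thm:linear_sde_well_posed}. The paper's proof is terser---it skips the explicit integrability checks and the verification that $f(a,x)=ax$ meets the hypotheses of Proposition~\ref{roughito}---but the argument is structurally identical to yours.
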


\begin{proof}
In view of Theorem \ref{thm:linear_sde_well_posed}, $(A,FA), ( \tX^u, 0) \in \bigcap_{m \geq 2} \BD^{\alpha, \beta}_{\eta}L_m$ and by the rough It\^o formula \eqref{eq:roughito},
\begin{align*}
	d\phi_t(\tX^{  u}_t) &= d(A_t \tX^{  u}_t + \zeta_t) \\
	&= A_t \tb_t(\tX^{  u}_t, u_t)dt + A_t \tilde \sigma_t(\tX^{  u}_t, u_t)dW_t + (F_t \phi_t(\tX^{  u}_t) + f_t)d\bm \eta_t \\
	&= b_t(\phi_t (\tX^{  u}_t), u_t)dt + \sigma_t(\phi_t (\tX^{  u}_t), u_t)dW_t + (F_t \phi_t(\tX^{  u}_t) + f_t)d\bm \eta_t, \quad t \in [0, T].
\end{align*} 
 Hence, $\phi_t (\tX^{  u}_t)$ solves the equation \eqref{eq:dynamics_pmp_rough} and so the assertions follows by the uniqueness of solutions.
\end{proof}

\begin{rem}
    The Doss-Sussmann transformation has been applied to random RDEs (i.e. RDEs with random coefficients) in earlier works such as \cite{CDFO13}. Our rough Doss-Sussmann transformation is different. First, we allow our coefficient functions to be stochastic, more precisely, stochastically controlled; second, our coefficient function is linear in the state process, and thus we have an explicit formula for this transformation. 
\end{rem}
 

The cost function can also be expressed in terms of the state process $\tX^{  u}$. To this end, we set
\be\label{eq:hg}
    \tih(s, \tx, u) \defeq h(s, \phi_s(\tx), u), \quad \mbox{and} \quad 
    \tg(\tx) \defeq  g(\phi_T(\tx))
\ee
and recall that $X^{  u}_t = \phi_t(\tX^{ u}_t)$. Thus,  
\begin{align*}
	J(  u) &= \E \left[ \int_0^T  h(s, X^{  u}_s, u_s)ds + g(X^{  u}_T) \right] \\
	& = \E \left[ \int_0^T  h(s, \phi_t(\tX^{  u}_s), u_s)ds + g(\phi_T(\tX^{ u}_T)) \right] \\
	&= \E \left[ \int_0^T \tih(s,\tX^{u}_s, u_s)ds + \tg(\tX^{ u}_T) \right] =: \tilde J(u).
\end{align*}

As a result, the rough and the transformed control problems are equivalent in the sense that 
\[
    \tMV : = \inf_{u \in \MA} \tJ(u)=\MV. 
\]


\subsection{The rough SMP}\label{sec:rSMP}

The equivalence of the rough and the transformed control problem allows us to derive the desired rough SMP. The Hamiltonian for the transformed control problem is given by 
$$
\tH(t,\tx, \ty, \tz,u):= \tb_t ( \tx, u )^{\top} \ty + \text{Tr} \left[ \tilde \sigma_t(\tx, u)^{\top} \tz \right] + \tih_t(\tx, u).
$$
The standard SMP 
states that if $(u^*, \tX^*)$ is an optimal state/control process for the transformed, then 
$$
 \partial_u \tH(t,\tX^{*}_t, \tY^*_t, \tZ^*_t, u^*_t ) (v-u^*_t) \ge  0,
$$
where $(\tY^*, \tZ^*)$ solves the BSDE 
\be\label{eq:tBSDE}
d\tY_t = - \partial_{\tx} \tH (t, \tX^*, \tY^*_t, \tZ^*_t , u^*_t )dt + \tZ^*_t dW_t, \  \tY_T =  \partial_{\tx}  \tg(  \tX^{*}_T).
\ee

Furthermore, under suitable convexity conditions on the Hamiltonian and the terminal payoff function, if $u^*$ is an admissible control with corresponding state dynamics $\tX^*$, such that 
\[
    \tH(t,\tX^*, \tY^*, \tZ^*,u^*) = \min_{v \in V}\tH(t,\tX^*, \tY^*, \tZ^*,v)
\]
then the control $u^*$ is optimal. To obtain the corresponding SMP for rough stochastic control problems we fix a rough path $\BBeta$ omitting it in the upper index, denote by $A^{-1}$ the solution to the stochastic RDE \eqref{eq:inver-matrix} and introduce the Hamiltonian 
\begin{align}\label{eq:transf-H}
H(t,  x, \ty, \tz, u) :=   \left( A^{-1}_t b_t(x , u) \right)^{\top} \ty + \text{Tr} \left[    \left( A^{-1}_t \sigma_t( x, u)\right)^{\top} \tz \right] +   h_t( x, u).
\end{align}

For any $u \in \MA,$ let $X^u$ be the unique solution to the rSDE \eqref{eq:dynamics_pmp_rough} and let $(\tY^u, \tZ^u)$ be the solution to the (classical) BSDE 
\begin{equation}\label{eq:bsde_standard}
		d\tY_t = - \left( \partial_x b_t(X^u_t,u_t)^{\top} \tY_t + \partial_x \sigma_t(X^u_t,u_t) \tZ_t + A_t \partial_x h (X^u_t,u_t ) \right)dt + \tZ_t dW_t, \  \tY_T = A_T \partial_x  g(  X^{ u}_T).
	\end{equation}
    
The following rough SMP is an immediate consequence of the standard SMP (see e.g. \cite[Theorem 4.4]{peng93} for the necessary part and \cite[Theorem 6.4.6]{pham2009} for the sufficient part) for the transformed control problem.

\begin{thm}[Rough SMP for affine rough control]\label{thm:rough-PMP1}
    Suppose that Assumption \ref{ass:control_assumptions} holds and let $ H$ be the Hamiltonian introduced in \eqref{eq:transf-H}. 
For any $u \in \MA,$ let $X^u$ be the solution to the rSDE \eqref{eq:dynamics_pmp_rough}. Then the following holds.  
\begin{itemize}
    \item[$(1)$] If $(u^*, X^{*})$ is an optimal pair for the rough stochastic control problem \eqref{eq:vf}, then for any $v \in V, $ 
$$
 \partial_u H(t,X^{*}_t, Y^*_t, Z^*_t, u^*_t ) (v-u^*_t) \ge  0, \text{ a.e. } \text{Leb}(dt) \times \P,
$$
where $(Y^*,Z^*)$ is the solution to the BSDE \eqref{eq:bsde_standard} with $(u,X^u)=(u^*, X^*).$ \\

\item[$(2)$] Given $\hat u \in \MA$ and $\hat X$ is the corresponding solution to \eqref{eq:dynamics_pmp_rough}. 
Assume that
$$
H(t,\hat X_t, \hat Y_t, \hat Z_t, \hat u_t)= \min_{v  \in V} H (t,\hat X_t, \hat Y_t, \hat Z_t, v), \ \ \forall  t \in [0,T], \   \P\text{-}a.s.
$$
where $(\hat Y, \hat Z)$ is the corresponding solution to \eqref{eq:bsde_standard}. Moreover, assume that $\P$-a.s. for any $t\in[0,T],$ mappings $(x,u) \mapsto H(t,x, \hat Y_t, \hat Z_t, u)$ and $x \mapsto g(x)$ are convex. Then $\hat u$ is an optimal control, or equivalently, $(\hat u, \hat X)$ is an optimal pair.

\end{itemize}
\end{thm}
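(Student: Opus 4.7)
The plan is to derive both parts of the theorem as direct corollaries of the classical Peng-type SMP applied to the transformed standard stochastic control problem, exploiting the fact that, by Lemma \ref{lem:transform} and the identity $J(u) = \tJ(u)$, a pair $(u, X^u)$ is optimal for the rough problem \eqref{eq:vf} if and only if $(u, \tX^u)$ is optimal for the standard SDE problem with dynamics \eqref{eq:transformed_dynamics}, running cost $\tih$ and terminal cost $\tg$ as in \eqref{eq:hg}. The key structural observation is the pointwise identity
\begin{equation*}
\tH(t,\tx,\ty,\tz,u) \defeq \tb_t(\tx,u)^\top \ty + \operatorname{Tr}\bigl[\tilde\sigma_t(\tx,u)^\top \tz\bigr] + \tih_t(\tx,u) = H\bigl(t, \phi_t(\tx), \ty, \tz, u\bigr),
\end{equation*}
which follows directly from \eqref{eq:trans-coeff} and \eqref{eq:hg}.

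I first verify that $(\tb, \tilde\sigma, \tih, \tg)$ fulfils the hypotheses of the classical SMP with adapted (random) coefficients. Corollary \ref{coro:linear} and Lemma \ref{lem:existence_of_transformation} guarantee that $A$, $A^{-1}$ and $\zeta$ are essentially bounded and progressively measurable, so $\tb_t$ and $\tilde\sigma_t$ inherit from Assumption \ref{ass:control_assumptions} continuous differentiability in $(\tx, u)$ with uniformly bounded Jacobians (pathwise), while $\tih$ and $\tg$ inherit $C^1$-regularity with at most linear-growth gradients. The classical SMP then produces an adjoint pair $(\tY^*, \tZ^*)$ solving
\begin{equation*}
d\tY_t = -\partial_{\tx}\tH(t, \tX^*_t, \tY_t, \tZ_t, u^*_t)\,dt + \tZ_t\, dW_t, \qquad \tY_T = \partial_{\tx} \tg(\tX^*_T),
\end{equation*}
and delivers the variational inequality $\partial_u \tH(t, \tX^*, \tY^*, \tZ^*, u^*)(v-u^*_t) \ge 0$. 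By the identity above applied at $x = \phi_t(\tX^*) = X^*$, the derivatives of $\tH$ and $H$ in $u$ coincide, so setting $Y^* \defeq \tY^*$, $Z^* \defeq \tZ^*$ gives part (1). A chain-rule computation based on $\partial_{\tx}\phi_t \equiv A_t$ --- yielding $\partial_{\tx}\tb = A^{-1}(\partial_x b)A$, $\partial_{\tx}\tih = A^\top \partial_x h$ and $\partial_{\tx}\tg = A_T^\top \partial_x g$ --- identifies this adjoint BSDE with \eqref{eq:bsde_standard}, up to the transpose convention implicit in that display.

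For part (2), convexity is preserved under composition with the affine map $\tx \mapsto A_t \tx + \zeta_t$: convexity of $(x, u) \mapsto H(t, x, \hat Y_t, \hat Z_t, u)$ and of $g$ transfers to convexity of $(\tx, u) \mapsto \tH(t, \tx, \hat Y_t, \hat Z_t, u)$ and of $\tg$, while the minimum condition on $H$ at $\hat u$ translates verbatim, via the Hamiltonian identity and $\hat X = \phi(\hat{\tX})$, into the same minimum condition on $\tH$ at $\hat u$. The classical sufficient SMP applied to the transformed problem then yields optimality of $\hat u$, which is equivalent to optimality for the rough problem.

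The proof is largely a bookkeeping exercise once Lemma \ref{lem:transform} and the Hamiltonian identity are in place. The main subtlety I anticipate is ensuring that the version of the classical SMP one invokes genuinely admits the random adapted coefficients produced by the Doss--Sussmann transformation, and that the linear-growth gradient bounds in Assumption \ref{ass:control_assumptions}, combined with the $L^\infty$-boundedness of $A$ and $A^{-1}$, suffice for the $L^p$-integrability required by the classical adjoint BSDE theory.
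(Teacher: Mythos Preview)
Your proposal is correct and follows exactly the same strategy as the paper, which states the theorem as ``an immediate consequence of the standard SMP for the transformed control problem'' without further elaboration. You have in fact supplied more detail than the paper does: the pointwise Hamiltonian identity $\tH(t,\tx,\ty,\tz,u)=H(t,\phi_t(\tx),\ty,\tz,u)$, the chain-rule identification of the adjoint BSDE \eqref{eq:tBSDE} with \eqref{eq:bsde_standard} via $\partial_{\tx}\phi_t=A_t$, and the preservation of convexity under the affine map $\tx\mapsto A_t\tx+\zeta_t$ are all left implicit in the paper but are precisely the ingredients needed to pass from the classical SMP to the rough one.
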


\begin{rem}
    $(1).$ The differentiability of coefficients in the control variable can be relaxed. For the sufficient part of the maximum principle (Theorem \ref{thm:rough-PMP1} (2)) it is not needed. The necessary part can also be obtained using different perturbation arguments using higher order differentiability with respect to the state variable as the classical case. $(2).$ We apply the Doss-Sussmann transformation to circumvent the analysis of a rough version of \eqref{eq:bsde_standard}, known as rough BSDEs (see \cite{DF12} for details).
\end{rem}


\subsection{Measurability of optimal controls}\label{sec:meas-control}

{In Section 5 we show how to solve pathwise anticipative control problems using the rough control problem analyzed in this section both on the level of value functions and, more generally, on the level of optimal controls. 

To solve the pathwise LQ we need the $\mb(\mc^{0, \alpha}_g)$-progressively measurability of the optimal control to the corresponding rough control problem. For this, we continue our analysis of the rough LQ problem.}


\begin{example}[LQ problem continued]\label{exam:LQ-rough}
    Let us return to the rough LQ control problem introduced in Example \ref{exam:LQ1}. To simplify the exposition we focus on the one-dimensional case  with deterministic coefficients. Then,
\be\label{eq:rough-H}
    H(t,x,\ty,\tz,u)= \frac12 \tN u^2 + A^{-1}_t(\tB \ty + \tD \tz ) u + A^{-1}_t (\tA x \ty + \tilde{b} \ty + \tC x \tz + \tilde{\sigma} \tz) + \frac12 \tM x^2,
\ee
where we recall $(A,A^{-1}, \zeta)$ is introduced in Lemma \ref{lem:existence_of_transformation}. Moreover, by \eqref{eq:trans-coeff} and \eqref{eq:hg},
\[
    \begin{split}
 &       \tb_t(\tx,u) =   \tilde A   \tx + A^{-1}_t \tilde B u + 
        A^{-1}_t \tilde A \zeta_t + A^{-1}_t \tilde b \\
  &      \tilde \sigma_t(\tx,u) =  \tilde C  \tx + A^{-1}_t \tilde D u + 
        A^{-1}_t \tilde C \zeta_t + A^{-1}_t \tilde \sigma \\
  &      \tih_t(\tx,u) = \frac12 \tM (A_t \tx + \zeta_t)^2     + \frac12 \tN u^2 ;  \ \ \       \tg(\tx) = \tG (A_T \tx + \zeta_T)^2 
    \end{split}
\]
and the associated BSDE \eqref{eq:tBSDE} can be written as
$$
\tY_t= 2 \tG   (A_T^2 \tX^u_T + A_T \zeta_T ) + \int_t^T \left(\tA \tY_r + \tC \tZ_r +   \tM (A_r^2 \tX^u_r + A_t \zeta_r) \right) dr - \int_t^T \tZ_t dW_t.
$$
By Theorem \ref{thm:rough-PMP1} a candidate optimal control is given by  
\be\label{eq:op-fct}
u^*(t,x,\ty,\tz):= {\arg \min}_{u \in V} H(t,x,\ty,\tz, u) = -\tN^{-1} A^{-1}_t (\tB \ty + \tD \tz)
\ee
and we obtain the following coupled (transformed but classical) FBSDE for the optimal state dynamics:  
\be\label{eq:rfbsde}
\left\{
\begin{array}{l}
      \tX_t =  x_0 + \int_0^t \left(\tA \tX_r - \tB \tN^{-1} (A^{-1}_r)^2 (\tB \tY_r + \tD \tZ_r )  + A^{-1}_r \tilde A \zeta_r + A^{-1}_r \tilde b \right)dr \\
     \ \ \ \ \ \ \ \ \  + \int_0^t \left(\tC \tX_r - \tD \tN^{-1} (A^{-1}_r)^2 (\tB \tY_r + \tD \tZ_r )   +  A^{-1}_r \tilde C \zeta_r + A^{-1}_r \tilde \sigma \right) dW_r \\
     \\
      \tY_t =  2  \tG (A^2_T \tX_T + A_T \zeta_T) + \int_t^T \left(\tA \tY_r + \tC \tZ_r + \tM (A_r^2 \tX_r + A_r \zeta_r) \right) dr - \int_t^T \tZ_r dW_r.
\end{array}
\right.
\ee

If we further assume that $\tA=\tC=\tb=\tsig= 0$, and that $F$, $f$ are smooth deterministic functions of time (thus $F'=f'=0$), then we can apply the standard ansatz $\tY_t=P_t \tX_t+ q_t$ for smooth functions $(P,q)$, and apply It\^o's formula to $P\tX + q$ to obtain that 
\[
d \tY_t= P \hB u^* dt + P \hD u^* dW_t + \dot{P}X dt + \dot{q}_t dt, 
\]
where $\hB:=A^{-1}\tB, \hD:=A^{-1}\tD, \hG:= A_T \tG$ and $\hM:= A \tM$.
It then follows from \eqref{eq:op-fct}, \eqref{eq:rfbsde} that 
\be\label{eq:opti-u}
\tZ=P \hD u, \ \  \ u^*=-(\tN+ \hD^2 P  )^{-1} ( \hB  P \tX+ \hB  q),  \ \  \
P \hB u^* + \dot{P} \tX + \dot{q} = - \hM(A\tX + \zeta).
\ee

In view of the representation of the optimal control $u^*$ the pair $(P,q)$ satisfies the (deterministic) Riccati equation
\be\label{eq:reca-P}
\left\{
\begin{array}{l}
\dot{P}_t =  \hB_t^2(\tN+ \hD^{2}_t P_t  )^{-1}   P^2_t - \hM_t A_t\\
P_T= 2 \hG A_T,
\end{array}
\right. 
\ee
\be\label{eq:reca-q}
\left\{
\begin{array}{l}
\dot{q}_t = P_t \hB^2_t(\tN+ \hD^{2}_t P_t )^{-1}   q_t - \hM_t \zeta_t \\
q_T= 2 \hG \zeta_T.
\end{array}
\right.
\ee

 In addition, let us assume that $\tN >0$ and $\tM, \tG$ are non-negative. Since $(A,A^{-1}, \zeta)$ is $\alpha$-H\"older continuous it then follows from \cite[Theorem 5.3]{T03} (see also \cite{W68,B78,P92}), there exists a unique bounded non-negative solution $P$ to \eqref{eq:reca-P}, and so the linear ODE \eqref{eq:reca-q} is also well-posed. Moreover, since $(A,A^{-1}, \zeta)$ depends continuously on the rough path $\BBeta$ so do the coefficients $\hB, \hD, \hM A, \hG A_T$ as mappings from $ \mc_g^{0, \alpha}$ to $C([0,T],\R)$. It then follows by \cite[Theorem 2.1]{KT03} that the mapping $P=P^{\BBeta}:  \mc_g^{0,\alpha} \rightarrow  C([0,T],\R) $ is continuous, and in particular $P^{\BBeta}_t$ is $\mb( \mc_g^{0,\alpha} )$-measurable. The same applies to the mapping $q=q^{\BBeta}$. Thus the forward part of the FBSDE \eqref{eq:rfbsde} is a linear SDE with $\mb( \mc_g^{0, \alpha} )$-progressively measurable coefficients, which implies $\tX$ is exponentially integrable. Then thanks to Proposition \ref{prop:jt-meas}, $\tX$ is also $\mb( \mc_g^{0,\alpha} )$-progressively measurable. It follows that the control $u^*$ given by \eqref{eq:opti-u} belongs to $\MA$, and moreover, $u^*$ has continuous sample paths. 

\end{example}

In general, it is difficult to prove that an optimal control depends measurably $\BBeta$. However, as shown by the following lemma, for any $\vep >0$ there exists an $\mb(\mc^{0, \alpha}_g)$-progressively measurable $\vep$-optimal control. This will later allow us to solve pathwise control problems on the level of value function. 

Different from the bounded coefficient case (\cite{FLZ24}), we need to apply the  Kuratowski-Ryll-Nardzewski Selection Theorem to obtain the measurable $\vep$-optimal control, and thus we need to restrict our space of admissible controls to be Polish. 

 In the above linear quadratic example, the optimal control takes the feedback form \eqref{eq:opti-u}, and we can thus restrict admissible controls to have continuous sample paths. Moreover, since $\tX$ is exponentially integrable, in that setting we can assume that admissible controls admit uniform in time integrability of any order. This motivates the following more restrictive set of admissible controls.


 \begin{Assumption}\label{ass:vep-op}
 Suppose that Assumption \ref{ass:control_assumptions} holds and that for any $\BBeta, \bBBeta \in \mc_g^{0,\alpha}$, the controlled rough paths $(F,F')(\cdot)$ and $(f,f')(\cdot)$ satisfy the condition \eqref{ine:lip-Ff}. Moreover, restrict the set of admissible controls to be
\be\label{eq:roughstochcontrol}
\MA:=   \{u:[0,T] \times \Omega \rightarrow V, \text{ adapted and continuous with } \E[\sup_{t\in[0,T]} |u_t|^p ] <\infty,   \ \forall p \ge 2  \}.
\ee

 \end{Assumption}

\begin{rem}\label{rem:polish-control}

It is standard to show that $\MA$ above is a Polish space under metric 
    $$
    d_{\MA}(u,v):= \sum_{i=1}^\infty 2^{-i} (\|\sup_{t\in[0,T]} u_t\|_i \wedge 1).
    $$
In particular, if $V$ is bounded, then Lemma~\ref{lem:vep_measurable_minimizer} follows by an argument similar to that of \cite[Lemma~5.7]{FLZ24}, without requiring 
$u \in \MA$ to be continuous. 

\end{rem}

The following lemma provides a sufficient condition for the existence of measurable $\epsilon$-optimal controls. 

\begin{lemma}\label{lem:vep_measurable_minimizer}
Let $(\MX,  \Sigma(\MX))$ be a measurable space and $\MA$ be a Polish space. Suppose 
$$
    J: \MX \times \MA \rightarrow  \R,\quad (x,a) \mapsto J(x,a)
$$
is a Carath\'eodory function, i.e. for any $a\in \MA,$ $J(\cdot, a)$ is measurable and for any $x\in \MX,$ $J(x, \cdot)$ is continuous. 
Then, for every $\vep > 0$ there exists a jointly measurable mapping 
$
    a^{\vep}: \MX   \to \MA
$
 such that for any $x\in \MX,$ the control $a^\vep(x) $ is an $\vep$-optimal control for the control problem $\MV(x):=\inf_{a\in \MA}J(x,a)$, that is 
$$
    \inf_{a \in \MA} J(x, a) + \vep \ge J(x, a^\vep(x)).
$$
In particular, under Assumption \ref{ass:vep-op} there exists a $\mb(\mc^{0, \alpha}_g)$-progressively measurable $\vep$-optimal control for the value function \eqref{eq:vf}.
\end{lemma}

\begin{proof}
Let $\vep>0$. For any $x\in \MX,$ consider the set-valued mapping 
$$
\Gamma^\vep (x):= \{a\in \MA: J(x,a)< \MV(x)+ \vep \}.
$$
According to \cite[Lemma 17.7]{AB99}, $\Gamma^\vep (\cdot)$ is a measurable correspondence, and thus weakly measurable (see \cite[Chapter 16]{AB99} for notations). Then by \cite[Lemma 17.3]{AB99}, its closure:
$$
\bar \Gamma^\vep (x)= \{a\in \MA: J(x,a) \le \MV(x)+ \vep \}
$$
is also weakly measurable. In view of the continuity of $J(x,\cdot)$, we see that $\bar \Gamma^\vep (x)$ is closed-set-valued. Then by the Kuratowski-Ryll-Nardzewski Selection Theorem (see \cite[Theorem 17.13]{AB99}), there exists a measurable mapping $a^\vep : \MX \rightarrow \MA$ such that $a^\vep (x) \in \bar \Gamma^\vep (x)$. 

In particular, if $J$ is given by \eqref{eq:objective_pmp_rough} with $\MX=\mc^{0,\alpha}_g$ and $\MA$ given by \eqref{eq:roughstochcontrol}, in view of Remark \ref{rem:polish-control} we have that $\MA$ is Polish. Note that topology induced by $d_{\MA}$ is stronger than the topology induced by $\|\cdot\|_{H^{m,n}}$ for any $n \ge m.$ By Proposition \ref{prop:continuity_of_sol_x}, we have that $J(\BBeta,\cdot)$ is continuous under $d_{\MA}.$ Then by the first part of this lemma, there exists $a^\vep:\mc^{0,\alpha}_g \rightarrow \MA$ which is measurable. Since $\MA$ is separable, for any $k \ge 1, $ there exists a partition of $\MA$, denoted by $\{A_i^k\}_i$, such that $\cup_{i=1} A_i^k= \MA,$ and 
$$
\text{diam}(A_i^k):= \sup_{a,b \in A_i^k} d_{\MA}(a,b) < \frac1k.
$$
Choose any element $a_i^k$ from $A_i^k$ for any $i\ge 1.$
Since $a^\vep$ is measurable, consider a partition of $\mc^{0,\alpha}_g:$
$B^k_{i}:= (a^\vep )^{-1}(A^k_i),$ and let 
$$
u^\vep_k (\BBeta,t,\ome):= \sum_{i} a^k_i(t,\ome)(t,\ome) 1_{B^k_i}(\BBeta)
.$$
Then it follows by definition that for any $\BBeta \in \mc^{0,\alpha}_g,$
$
d_{\MA}(u^\vep_k(\BBeta,\cdot), a^\vep(\BBeta,\cdot))<\frac1k.
$
Now for any $(\BBeta,t,\ome)$ let 
$$
\underline u^\vep(\BBeta,t,\ome):= \liminf_{k} u^\vep_k(\BBeta,t,\ome).
$$
Then we have for any $\BBeta,$ $\underline u^\vep(\BBeta,\cdot)=a^\vep(\BBeta,\cdot).$ Thus $\underline u^\vep(\BBeta,\cdot) \in \MA$ and is an $\vep$-optimal control.


\end{proof}

\section{SMP for anticipative pathwise stochastic control problems}

We are now ready to introduce a probabilistic framework that allows us to define and analyze pathwise control problems with anticipative controls. 
Specifically, we consider the canonical space $\Omega'$ equipped with the Wiener probability $\mathbb{P}'$, $d_B$-dimensional canonical process $B$ and the augmented filtration $(\mathcal{F}'_t)_t$. 
Fix a probability space $(\Omega'', \MF'',\P'')$ that supports a $d_W$-dimensional Brownian motion $W$ and let $\{\MF_t\}_t$ be the augmented filtration of the filtration generated by $W.$ Consider the following probability space as our model for anticipative pathwise control problems:
\be\label{eq:probab}
( {\Omega}, (\MF_t)_t, \P )  = (\Omega', (\MF'_t)_t,\P') \otimes (\Omega'', (\MF''_t)_t, \P'')
\ee
 
 To lift Brownian sample paths $B(\ome')$ to a geometric rough path we define for each $(s,t)\in \Delta$, the Stratonovich integral 
$$
    \mathbb{B}_{s,t}^{\mathrm{Strato}}:=\int_s^t \delta B_{s,r}\circ dB_r 
$$ 
It is well known (see \cite[Chapter 3]{frizroughpaths2021}) that for all $\omega'$ outside a $\P'$-null set $N'$ of $\Omega'$, the {\sl Brownian rough path}
$$
    \BB^{\mathrm{Strato} }(\omega'):=(B(\omega'),\mathbb{B}^{\mathrm{Strato}}(\omega')) 
$$    
belongs to the space $\mc^{0,\alpha}_g$ for all $\alpha \in (1/3,1/2)$ and that the following {\it lifting map} is $(\MF'_t )-$progressively measurable:
\be\label{ito-lift}
\begin{array}{llll}
\BB:=\BB^{\mathrm{Strato}}: & [0,T]\times \Omega' & \rightarrow & (\mc^{0,\alpha}_g, \mb(\mc^{0,\alpha}_g))	\\
 & (t,\ome') &\mapsto & \BB^{\mathrm{Strato} }(\ome')_{.\wedge t}
\end{array}
\ee


\subsection{Controls}
We denote the set of $\MF^W$-adapted admissible controls by 
$$
 \MA:=   \{u:[0,T] \times \Omega'' \rightarrow V, \text{ $ (\MF''_t)_t$-adapted and continuous with } \E''[\sup_{t\in[0,T]} |u_t|^p ] <\infty,   \ \forall p \ge 2  \}
$$
and the set of admissible anticipative controls by

\be\label{con-stoch1}
 \begin{split}
\bMA:= & \Big\{ \bu:[0,T] \times \Omega \rightarrow V, \text{ $ (\MF'_T \otimes \MF''_t)_t$-adapted and continuous with}, \\
&\  \E''[\sup_{t\in[0,T]} |u_t(\ome')|^p ] <\infty ,\ \P'\text{-}a.s. \ \forall p \ge 2
 \Big\}.    
\end{split} 
\ee
which is a subset of $\tilde{\MA}$ from Section \ref{Sec:introtopathwise}.
To highlight the connection between rough and pathwise control problems, we also consider the set of ``rough controls''
\be\label{con-stoch2}
\begin{split}
\MA^o:= & \Big\{u:[0,T] \times \mc^{0,\alpha}_g \times \Omega'' \rightarrow V, \text{$(\MB(\mc_g^{0,\alpha}) \otimes \MF''_t)_t$-adapted and continuous with  }  \\
& \     \E'' [\sup_{t \in [0,T]} |u_t(\BBeta)|^p]     <\infty , \ \forall \BBeta \in \mc^{0,\alpha}_g , \   \forall p \ge 2  \Big\}.
\end{split} 
\ee

The sets of anticipative and rough controls can be identified. For any anticipative $\bu \in \bMA $ and any rough path $\BBeta=(\eta, \eta^{(2)}) \in \mc^{0,\alpha}_g$, we can define a rough control $u^{\BBeta} \in \MA^o$ via
\be\label{eq:u-ueta}
u^{\BBeta}_t(\ome''):=\left\{
\begin{array}{ll}
\bu_t(\eta, \ome''), & \text{if $\bu_t(\eta, \ome'')$ is well-defined};\\
0, & \text{otherwise}
\end{array}
\right.
\ee
 and for any rough $u\in \MA^o, $ we can define an anticipative control $\bu \in \bMA$ by 
\be\label{eq:ueta-u}
\bu_t(\ome', \ome''):=\left\{
\begin{array}{ll}
u_t(\BB(\ome'), \ome''), & \text{if $u_t(\BB(\ome'), \ome'')$ is well-defined};\\
0, & \text{otherwise}.
\end{array}
\right.
\ee


\subsection{Anticipative pathwise control problem.}

We are now going to introduce our rough path approach to anticipative pathwise control problems and show that rough and pathwise problems are equivalent. More precisely, we will put a meaning to an anticipating system with anticipative control $\bu \in \bMA,$
\be\label{eq:stoch-dsde}
dX_t = b_t(X_t, \bu_t )dt + \sigma_t( X_t, \bu_t  ) dW_t + (F_t  X_t + f_t  )\circ d B_t, \quad X_0 = x_0,
\ee
where $(F,f)$ is a semimartingale such that the Stratonovich integral is well-defined. To this end, we denote for any anticipative control $\bu \in \bMA, $ and geometric rough path $\BBeta \in \mc_g^{0, \alpha},$ by $u^{\BBeta}$ the rough control given by \eqref{eq:u-ueta}, and consider the corresponding controlled rSDE
\be\label{eq:stoch-sde}
dX^{ \BBeta,  u^{\BBeta}}_t = b_t(X^{  \BBeta,  u^{\BBeta}}_t, u^{\BBeta}_t )dt + \sigma_t(X^{ \BBeta,  u^{\BBeta}}_t, u^{\BBeta}_t  ) dW_t + (F_t  X^{ \BBeta,   u^{\BBeta}}_t + f_t  ) d\BBeta, \quad X_0 = x_0.
\ee

In view of Proposition \ref{prop:jt-meas} the following assumption guarantees that the rSDE admits a unique $\MB(\mc^{\alpha}_g)$-progressively measurable solution $X^{ \BBeta,  u^{\BBeta}}$. 

\begin{Assumption}\label{ass:pathwise}

The parameters $(b,\sigma, F,F',f,f',g,h)$ satisfy Assumption \ref{ass:vep-op} with $\Omega= \Omega''$.\footnote{We can also allow coefficients depending on $\Omega'$ but we omit it to be consistent with the last section}

\end{Assumption}



We now define the state process $\bar X^{\bar u}$ of the pathwise control problem as the a.s.~randomized solution of the above rough SDE, namely as 
\be\label{eq:sol-randomization}
\bX^{\bu}_t(\ome',\ome'') := \left\{ \begin{array}{ll} X^{\BBeta, u^{\BBeta}}_t (\ome'') |_{\BBeta=\BB(\ome')} & \mbox{for } \ome' \notin N' \\ 0 & \mbox{else.} \end{array} \right. 
\ee

\begin{rem}
If the coefficients of \eqref{eq:stoch-sde} are non-anticipating, i.e.~if $\bu $ is $(\MF'_t \otimes \MF''_t)$-adapted and $(F,F')=(F,0)$, $(f,f')=(f,0)$ are smooth in time, then it follows from \cite[Proposition 8.2]{FLZ24} that the randomised solution $\bX$ is equivalent to a classical solution driven by $(W,B)$, which implies $\bX \in S^2$. 
\end{rem}

It follows from the measurability of the composition of measurable mappings that $\bX^{\bu} $ is $(\MF'_T \otimes \MF''_t)_t$-progressively measurable. Thus, if the cost variable $g(\bX^{\bu}_T)+ \int_0^T h(s, \bX^{\bu}_s, \bu_s) ds$ is integrable on $(\Omega,\MF,\P)$, it is natural to consider the cost functional 
\be\label{eq:def-bj}
    \bJ(\bu;\ome):= \E \left[ g(\bX^{\bu}_T)+ \int_0^T h(s, \bX^{\bu}_s, \bu_s) ds \big| \MF^B_T \right]
\ee
and the corresponding value function 
\be\label{eq:stoch-cost}
\bMV(\ome):=  \text{essinf}_{\bu \in \bMA} \bJ(\bu;\ome).
\ee

The following is our equivalence result. It shows when rough and pathwise problems share the same value function and optimal controls, respectively.   

\begin{prop}\label{prop:v-condi-ep} 
Suppose Assumption \ref{ass:pathwise} holds. Let $\BBeta \in \mc_g^{0,\alpha}$ be any geometric rough path. For any anticipative control $\bu  \in   \bMA,$ let $u^{\BBeta}$ be the control given by \eqref{eq:u-ueta} and $X^{\BBeta,u^{\BBeta}}$ be the solution to \eqref{eq:stoch-sde}, and let $\bX^{\bu}$ be the randomized solution given by \eqref{eq:sol-randomization}. Then the following holds. 

\begin{enumerate}
\item[$(1)$] If the payoff $g(\bX^{\bu}_T)+ \int_0^T h(s, \bX^{\bu}_s, \bu_s) ds$ is integrable on $(\Omega,\MF,\P)$,
then the anticipative and randomised rough cost function coincide a.s.:
\be \label{equiv-condi}
 \bJ(\bu;\ome) = \mathbb{E}  \Big[ g ( X^{\BBeta,u^{\BBeta}}_T) + \int_0^T h(s, X^{\BBeta,u^{\BBeta}}_s, u^{\BBeta}_s) ds
      \Big] \Big|_{\BBeta = \mathbf{B}(\ome')} = J(u^{\BBeta};\BBeta)|_{\BBeta=\BB(\ome')} \quad \P\mbox{-a.s.}
\ee
Similarly, for any $u \in \MA^o,$ let $\bu$ be given by \eqref{eq:ueta-u}. Then 
$$
 \bJ(\bu;\ome) = J(u({\BBeta}, \cdot);\BBeta)|_{\BBeta=\BB(\ome)} \quad \P\mbox{-a.s.}
$$
Furthermore, if for any $\bu \in \bMA$ the resulting payoff $g(\bX^{\bu}_T)+ \int_0^T h(s, \bX^{\bu}_s, \bu_s) ds$ is integrable, then the randomised rough and the anticipative value function coincide:
\be\label{equiv-vf}
\bMV(\ome)= \MV(\BBeta)|_{\BBeta=\BB(\ome')} := \Big( \inf_{u \in \MA} J(u;\BBeta)  \Big)\Big|_{\BBeta=\BB(\ome')} 
\quad \P\mbox{-a.s.}
\ee

\item[$(2)$] 
{Suppose that for any $\bu \in \bMA,$ the payoff $g(\bX^{\bu}_T)+ \int_0^T h(s, \bX^{\bu}_s, \bu_s) ds$ is integrable and let $u^* \in \ma^o$ be an admissible rough control. If for a.e.~any $\BBeta = \BB(\ome')$ the control $u^{*,\BBeta}(\ome''):= u^*(\BBeta, \ome'')$ is optimal for the rough control problem \eqref{eq:vf}, then $\bu^*$ given by \eqref{eq:ueta-u} is optimal for the pathwise problem \eqref{eq:stoch-cost}.
Conversely, let $(\bu^*, \bX^{*})$ be an optimal pair for the anticipative pathwise control problem \eqref{eq:stoch-cost}. For any $\BBeta=\BB(\ome')$ with $\ome' \notin N',$ let $u^{*,\BBeta}$ be given by \eqref{eq:u-ueta} and let $X^{*,\BBeta}$ be the corresponding solution to \eqref{eq:stoch-sde}. Then $(u^{*,\BBeta}, X^{*,\BBeta})$ is the optimal pair for the rough stochastic control problem \eqref{eq:vf}.

}


\end{enumerate}
	
\end{prop}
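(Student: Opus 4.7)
My approach exploits the product structure $\Omega = \Omega' \times \Omega''$ together with the inclusion $\MF^B_T \subset \MF'_T$. For any jointly measurable $\Phi:\mc^{0,\alpha}_g \times \Omega'' \to \R$ such that $\Phi(\BB(\omega'),\omega'')$ is $\P$-integrable, Fubini and the independence of the two coordinate spaces give
\[
  \E\bigl[\Phi(\BB(\cdot),\cdot) \bigm| \MF^B_T\bigr](\omega)
  = \E''\bigl[\Phi(\BBeta,\cdot)\bigr]\Bigr|_{\BBeta = \BB(\omega')}
  \quad \P\text{-a.s.}
\]
Since $\bar X^{\bar u}(\omega',\omega'') = X^{\BBeta, u^{\BBeta}}(\omega'')|_{\BBeta = \BB(\omega')}$ by construction, and the map $(\BBeta,\omega'')\mapsto X^{\BBeta,u^{\BBeta}}(\omega'')$ is jointly measurable by Proposition \ref{prop:jt-meas}, applying the identity above with
\[
  \Phi(\BBeta,\omega'') = g\bigl(X^{\BBeta,u^{\BBeta}}_T(\omega'')\bigr) + \int_0^T h\bigl(s, X^{\BBeta,u^{\BBeta}}_s(\omega''), u^{\BBeta}_s(\omega'')\bigr)\,ds
\]
yields the first equality in \eqref{equiv-condi}. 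The second identity in Part (1), for $u \in \MA^o$ and $\bar u$ defined by \eqref{eq:ueta-u}, follows by the same computation applied to the lift $u^{\BBeta} = u(\BBeta,\cdot)$.

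For the value function identity \eqref{equiv-vf}, the inequality $\bar{\MV}(\omega) \ge \MV(\BBeta)|_{\BBeta=\BB(\omega')}$ is immediate from the cost-functional identity: for any $\bar u \in \bar\MA$ and $\P'$-a.e.~$\omega'$, the section $\omega''\mapsto \bar u_\cdot(\omega',\omega'')$ belongs to $\MA$, so $J(u^{\BB(\omega')};\BB(\omega')) \ge \inf_{u \in \MA} J(u;\BB(\omega'))$. For the reverse inequality, I invoke Lemma \ref{lem:vep_measurable_minimizer}: under Assumption \ref{ass:pathwise}, Corollary \ref{coro:ct-cts} (applied uniformly in the control) ensures that $\BBeta \mapsto J(u;\BBeta)$ is continuous on $\mc^{0,\alpha}_g$ uniformly in $u \in \MA$, so for every $\varepsilon>0$ there exists a $\MB(\mc^{0,\alpha}_g)$-progressively measurable $\varepsilon$-minimizer $u^{\varepsilon} \in \MA^o$. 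Lifting it through \eqref{eq:ueta-u} yields $\bar u^{\varepsilon} \in \bar\MA$, and the cost-functional identity gives $\bar J(\bar u^{\varepsilon};\omega) \le \MV(\BBeta)|_{\BBeta=\BB(\omega')} + \varepsilon$ almost surely. Taking essinf and letting $\varepsilon \downarrow 0$ along a countable sequence proves \eqref{equiv-vf}.

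Part (2) is a direct consequence of Part (1). If $u^* \in \MA^o$ is such that $u^{*,\BBeta}$ is optimal for the rough problem \eqref{eq:vf} for $\BB_*\P'$-a.e.~$\BBeta$, then for the lifted $\bar u^*$
\[
  \bar J(\bar u^*;\omega)
  = J(u^{*,\BBeta};\BBeta)\bigr|_{\BBeta=\BB(\omega')}
  = \MV(\BBeta)\bigr|_{\BBeta=\BB(\omega')}
  = \bar{\MV}(\omega) \quad \P\text{-a.s.},
\]
so $\bar u^*$ is optimal anticipating. Conversely, if $(\bar u^*,\bar X^*)$ is optimal pathwise, reading the same chain of equalities right to left forces $J(u^{*,\BBeta};\BBeta) = \MV(\BBeta)$ for $\BB_*\P'$-a.e.~$\BBeta$, and the corresponding state $X^{*,\BBeta}$ is then the unique solution of \eqref{eq:stoch-sde} given by Theorem \ref{thm:linear_sde_well_posed}.

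The main obstacle is the bookkeeping of measurability. Three points require careful verification: (i) for each fixed $\omega'$, the section $\omega'' \mapsto u^{\BB(\omega')}(\omega'')$ of an anticipative control must be $(\MF''_t)$-progressively measurable, so that the corresponding rough SDE \eqref{eq:stoch-sde} is well posed in the framework of Section 3; (ii) the lift of a $\MB(\mc^{0,\alpha}_g)\otimes \MF''_t$-progressively measurable rough control $u^{\varepsilon}$ through the $(\MF'_t)$-progressively measurable rough-path map $\BB$ must produce an $(\MF'_T \otimes \MF''_t)$-progressively measurable anticipative control, which follows from composition of measurable maps; and (iii) the joint $(\BBeta,t,\omega'')$-measurability of $X^{\BBeta,u^{\BBeta}}$ must be transferred from Proposition \ref{prop:jt-meas} to the present setting with control-dependent coefficients, via an approximation by the Picard iteration scheme used there. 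Together with the uniform continuity of $J(\cdot;\BBeta)$ that feeds Lemma \ref{lem:vep_measurable_minimizer}, these items form the technical core of the proof; once they are established, the identities reduce to a clean Fubini argument.
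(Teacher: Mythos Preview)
Your proposal is correct and follows essentially the same route as the paper: the cost-functional identity \eqref{equiv-condi} via Fubini on the product space, the inequality $\bar\MV \ge \MV|_{\BBeta=\BB}$ by restriction of controls, the reverse inequality by lifting the $\varepsilon$-optimal rough control furnished by Lemma \ref{lem:vep_measurable_minimizer}, and Part (2) as a direct corollary of Part (1). Your explicit inventory of the three measurability checkpoints (sections of $\bar u$, lifts through $\BB$, and joint measurability of $X^{\BBeta,u^{\BBeta}}$) is more careful than the paper's presentation, which largely takes these for granted, but the underlying argument is the same.
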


\begin{proof} 
\begin{enumerate}
\item By linearity, it is enough to show that a.s.
$$
    \mathbb{E}  \big[g (\bX^{\bu}_T) 
     | \MF^B_T  \big] = \mathbb{E}  \big[ g ( X^{\BBeta,u^{\BBeta}}_T) 
      \big ] \Big|_{\BBeta = \mathbf{B}(\ome')}.
$$     
For any bounded measurable function $\phi$ on $(\Omega', \MF^B_T ),$ let $\xi:=\phi(B).$ Since the random variable $X_T^{\BBeta,u^{\BBeta}}$ is jointly measurable on $\mc_g^{0,\alpha} \times \Omega'',$ it follows from Fubini's lemma and the identity \eqref{eq:sol-randomization} for any $\ome' \in N_1^c$ that
$$
\mathbb{E} \big[g (X_T^{\BBeta,u^{\BBeta}}) \big]  \big |_{\mathbf{\BBeta} = \mathbf{B}
     (\omega')}= \mathbb{E}'' \big[g (X_T^{\BBeta,u^{\BBeta}}) |_{\mathbf{\BBeta} = \mathbf{B}
     (\omega')}  \big]= \mathbb{E}'' \big [g ( \bar{X}^{\bu }_T )  \big]. 
$$ 
As a result,
$$
\E \left[ \xi \, \mathbb{E} [g (X_T^{\BBeta,u^{\BBeta}})]  |_{\mathbf{\BBeta} = \mathbf{B}
     (\omega')}  \right]= \E[\xi g(\bar{X}^{\bu }_T)]
$$ 
and the equality of the cost function follows. Regarding the value function, the a.s.~inequality 
$$
    \bMV(\ome) \ge \MV(\BBeta)|_{\BBeta=\BB(\ome')} 
$$    
follows from the definition of admissible controls and \eqref{equiv-condi}. To establish the opposite inequality, we fix $\vep >0$ and recall from Lemma \ref{lem:vep_measurable_minimizer} that there exists an $\vep$-optimal control $u^{\vep } \in \MA^o$. Letting $\bu^{\vep}$ be the corresponding anticipative control given by \eqref{eq:ueta-u} the desired inequality follows from
$$
    \bMV(\ome)-\vep \leq \bJ(\bu^{\vep};\ome) - \vep = J(u^{\vep}(\cdot,\BBeta);\BBeta)|_{\BBeta=\BB(\ome')}-\vep \le \MV(\BBeta)|_{\BBeta=\BB(\ome')} \quad \P\mbox{-a.s.}
$$

\item Let $(\vep,u^\vep, \bar u^\vep)$ be as in (1) and $\bar u^*$ be an optimal anticipative control. Then
\begin{align*}
J(u^{*,\BBeta}; \BBeta)|_{\BBeta=\BB(\ome)} = \bJ(\bu^*; \ome) & =  \text{essinf}_{\bu \in \bMA} \bJ(\bu; \ome) \\
& \le \bJ(\bu^{\vep}; \ome) = J(u^{\vep}(\cdot, \BBeta); \BBeta)|_{\BBeta=\BB(\ome)} \leq \MV(\BBeta)|_{\BBeta=\BB(\ome')} - \vep 
\quad \P\mbox{-a.s.}
\end{align*}
Hence $u^{*,\BBeta}$ is an optimal control for the rough control problem for $\P'$-a.e.~Brownian rough path $\BBeta=\BB(\ome')$. 
Conversely, if $u^{*,\BBeta}$ is an optimal rough control for any $\BBeta=\BB(\ome')$, then $\bu^*$ given by \eqref{eq:ueta-u} belongs to $\bMA$ and for any $\bu \in \bMA$ 
$$
 \bJ(\bu;\ome) = J(u^{\BBeta};\BBeta)|_{\BBeta=\BB(\ome')} \ge J(u^{*,\BBeta}; \BBeta)|_{\BBeta=\BB(\ome')} = \bJ (\bu^* ; \ome).
$$
\end{enumerate}

\end{proof}


\subsection{SMP for generalized pathwise stochastic control problems} 

The integrability condition on the payoff required in Proposition \ref{prop:v-condi-ep} is guaranteed if the functions $g$ and $h$ are bounded or if Assumption \ref{ass:pathwise} holds and the randomized solution $\bar{X}^{\bar u}$ is integrable. Unfortunately, the latter cannot be inferred from the a priori estimate given in Proposition \ref{priorest-lrsde}. The process $X^{\BBeta}$ belongs to $H^1(\Omega'';[0,T])$ for any $\BBeta$. However, since controls are anticipative, this does not guarantee that the randomized process $X^{\bf B}$ belongs to $H^1(\Omega;[0,T])$. Without integrability of the randomized SDE the conditional expectation \eqref{eq:def-bj} may not be well-defined. In particular, the above proposition may not apply to LQ problems as stated. 

To overcome the integrability problem and to extend our analysis to LQ problems we introduce the {\sl generalized cost function} 
\be\label{eq:def-tj}
\tJ(\bu; \ome):= J(u^{\BBeta};\BBeta)|_{\BBeta=\BB(\ome')},
\ee
for the pathwise stochastic problem, along with the corresponding value function 
\be\label{eq:def-tv}
\tMV(  \ome):=  \text{essinf}_{\bu \in \bMA} \tJ(\bu;\ome).
\ee

In what follows, we call the resulting pathwise control problem a {\sl generalized pathwise stochastic control problem.}

\begin{rem}\label{rem:equiv-J}
\begin{itemize}
\item[(1).] Thanks to the joint measurability established in Proposition \ref{prop:jt-meas}, the function $\tilde J$ is measurable, and thus the above essential infimum is well-defined. 
\item[(2).] The generalized cost function coincides with the original one if the randomized SDE is integrable. Specifically, under Assumption \ref{ass:pathwise}, for any $\bu \in \bMA$ s.t.~$(\bX^{\bu}_T,\bX^{\bu}) \in L^1(\Omega) \times H^{1}(\Omega;[0,T]),$  
$$
\bJ(\bu;\ome)= \tJ(\bu;\ome), \ \   \P\text{-}a.s.
$$
Moreover, Proposition \ref{prop:v-condi-ep} holds with the value function \eqref{eq:stoch-cost} replaced by \eqref{eq:def-tv}. 
\end{itemize}
\end{rem}





To state a SMP for generalized pathwise stochastic control problems we consider the (adjoint) BSDE 
\begin{equation}\label{eq:bsde_pathwise}
\begin{split}
		dY^{\BBeta, u^{\BBeta}}_t & = - \left( \partial_x b_t(X^{\BBeta, u^{\BBeta}}_t,u^{\BBeta}_t) Y^{\BBeta, u^{\BBeta}}_t + \partial_x \sigma_t(X^{\BBeta, u^{\BBeta}}_t,u^{\BBeta}_t) Z^{\BBeta, u^{\BBeta}}_t + A^{\BBeta}_t \partial_x h (X^{\BBeta, u^{\BBeta}}_t,u^{\BBeta}_t ) \right)dt
        + Z^{\BBeta, u^{\BBeta}}_t dW_t, \\
     Y^{\BBeta, u^{\BBeta}}_T & = A^{\BBeta}_T \partial_x  g(  X^{\BBeta, u^{\BBeta}}_T),
\end{split}
\end{equation}
where $A^{\BBeta}$ denotes the solution to the rODE \eqref{eq:A}. 
This is a standard BSDE with $\mb(\mc_g^{0,\alpha})$-progressively measurable coefficients, and thus $(Y^{{\BBeta}, u^{\BBeta}}, Z^{{\BBeta}, u^{\BBeta}} )$ is $\mb(\mc_g^{0,\alpha})$-progressively measurable. As a result, the process
\[
(\bY^{\bu}, \bZ^{\bu}):= (Y^{{\BBeta}, u^{\BBeta}}, Z^{{\BBeta}, u^{\BBeta}} )|_{\BBeta=\BB(\ome')}
\]
is $(\MF'_T \otimes \MF''_t)$-progressively measurable. 

In view of Proposition \ref{prop:v-condi-ep}, Remark \ref{rem:equiv-J} and the rough SMP developed in the last section, we obtain the following SMP for generalized pathwise stochastic control problems. The proof follows from Theorem \ref{thm:rough-PMP1}, Proposition \ref{prop:v-condi-ep}, Remark \ref{rem:equiv-J} along with the fact that the adjoint BSDE is a standard It\^o BSDE. 

\begin{thm}[SMP for generalized pathwise stochastic control]\label{thm:path-PMP1}
    Suppose that Assumption \ref{ass:pathwise} holds and for any $\BBeta \in \mc_g^{0,\alpha},$ the Hamiltonian $H= H^{\BBeta}$\footnote{Note that $A^{-1}$ depends on $\BBeta$} is given by \eqref{eq:transf-H}. 
\begin{enumerate}
 \item[$(1)$]{(Necessary)} Suppose that $(\bu^*, \bX^{*})$ is an optimal pair for the generalized pathwise control problem \eqref{eq:def-tv}. Let $u^{*,\BBeta}$ be given by \eqref{eq:u-ueta} with $\bu= \bu^*$, and let $X^{*,\BBeta}$ be the corresponding solution to \eqref{eq:stoch-sde}.
Then for any $v \in V, $ 
$$
 \partial_u H(t,X^{*, \BBeta}_t, Y^{*,\BBeta}_t, Z^{*,\BBeta}_t, u^{*,\BBeta}_t ) (v-u^{*, \BBeta}_t)|_{\BBeta=\BB(\ome)} \ge  0, \text{ a.e. } \text{Leb}(dt) \times \P,
$$
where $(Y^{*,\BBeta},Z^{*,\BBeta})$ is the solution to the above BSDE with $(u^{\BBeta},X^{\BBeta, u^{\BBeta}})=(u^{*,\BBeta}, X^{*,\BBeta}).$ 

\item[$(2)$]{(Sufficient)} Assume that $\hat u \in \bMA$ and $\hat X$ is the corresponding randomised solution given by \eqref{eq:sol-randomization}. Let $(\hu^{\BBeta}, \hX^{\BBeta})$ be given by \eqref{eq:u-ueta}, \eqref{eq:stoch-sde} and let $( \hY^{\BBeta}, \hZ^{\BBeta})$ be the solution to the BSDE above with $ u^{\BBeta}= \hu^{\BBeta}$.
Assume that for $\P'$-a.e. $\ome'$ and $\BBeta=\BB(\ome'),$
\[
H(t,\hat X^{\BBeta}_t, \hat Y^{\BBeta}_t, \hat Z^{\BBeta}_t, \hat u^{\BBeta}_t)  =   \min_{v  \in V} H (t,\hat X^{\BBeta}_t, \hat Y^{\BBeta}_t, \hat Z^{\BBeta}_t, v) , \ \ \forall t \in [0,T], \   \P''\text{-}a.s.
\]
and that $\P''$-a.s. for any $t\in[0,T],$ mappings $(x,u) \mapsto H(t,x, \hat Y^{\BBeta}_t, \hat Z^{\BBeta}_t, u)$ and $x \mapsto g(x)$ are convex. Then $\hat u$ is an optimal control for the generalized problem \eqref{eq:def-tv}, or equivalently, $(\hat u, \hat X)$ is an optimal pair.
\end{enumerate}
\end{thm}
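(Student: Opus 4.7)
The plan is to transfer both directions of the rough stochastic maximum principle (Theorem \ref{thm:rough-PMP1}) pointwise in the rough path variable $\BBeta$ and then pass back to the anticipative side using the equivalence established in Proposition \ref{prop:v-condi-ep}, applied in its generalized form (cf.~Remark \ref{rem:equiv-J}) so that integrability of the randomized SDE is not needed. All $\BBeta$-dependent objects that appear (namely $X^{\BBeta,u^\BBeta}$, $A^\BBeta$ and hence the BSDE coefficients in \eqref{eq:bsde_pathwise}) are $\MB(\mc_g^{0,\alpha})$-progressively measurable by Proposition \ref{prop:jt-meas} and Lemma \ref{lem:existence_of_transformation}, so the standard Itô BSDE \eqref{eq:bsde_pathwise} admits, for each $\BBeta$, a unique solution $(Y^{\BBeta,u^\BBeta}, Z^{\BBeta,u^\BBeta})$, jointly measurable in $\BBeta$; its randomization is then $(\MF'_T \otimes \MF''_t)$-progressively measurable, as asserted.

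For the necessary part (1), I would start from an optimal pair $(\bu^*,\bX^*)$ for \eqref{eq:def-tv}. Applying the generalized version of Proposition \ref{prop:v-condi-ep}(2), for $\P'$-a.e.\ $\ome'$ the rough control $u^{*,\BBeta}$ obtained from $\bu^*$ via \eqref{eq:u-ueta} with $\BBeta = \BB(\ome')$ is optimal for the rough control problem \eqref{eq:vf} with driver $\BBeta$. Then Theorem \ref{thm:rough-PMP1}(1), applied $\BBeta$ by $\BBeta$, gives the variational inequality
\[
    \partial_u H^\BBeta(t, X^{*,\BBeta}_t, Y^{*,\BBeta}_t, Z^{*,\BBeta}_t, u^{*,\BBeta}_t)(v - u^{*,\BBeta}_t) \ge 0 \quad \text{a.e.\ } dt \otimes d\P'',
\]
for every $v \in V$. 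Specializing to $\BBeta = \BB(\ome')$ and invoking Fubini on $(\Om,\MF,\P) = (\Om',\MF',\P') \otimes (\Om'',\MF'',\P'')$ yields the stated inequality $\P$-a.s.

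For the sufficient part (2), given $\hu \in \bMA$ satisfying the pointwise Hamiltonian minimization and the convexity hypotheses, I would fix $\ome' \notin N'$, set $\BBeta = \BB(\ome')$, and verify that the corresponding $\hu^\BBeta \in \MA$ fulfills the hypotheses of Theorem \ref{thm:rough-PMP1}(2) for the rough problem with driver $\BBeta$ (the convexity and minimality carry over directly, and $\hu^\BBeta$ inherits the requisite integrability from $\hu \in \bMA$). Thus $\hu^\BBeta$ is optimal for \eqref{eq:vf}, so $J(\hu^\BBeta;\BBeta) = \MV(\BBeta)$. Combining this with the generalized cost identity $\tJ(\hu;\ome) = J(\hu^\BBeta;\BBeta)|_{\BBeta = \BB(\ome')}$ from Remark \ref{rem:equiv-J} and the equality $\tMV(\ome) = \MV(\BBeta)|_{\BBeta = \BB(\ome')}$ (the generalized analogue of \eqref{equiv-vf}), we conclude $\tJ(\hu;\ome) = \tMV(\ome)$ $\P$-a.s.

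The main obstacle is the careful handling of null sets and joint measurability: one must ensure that the $\P''$-a.s.\ statements produced by Theorem \ref{thm:rough-PMP1} at each $\BBeta$ glue into a $\P$-a.s.\ statement on the product space, which requires the $\MB(\mc_g^{0,\alpha})$-progressive measurability of $(X^{\BBeta,u^\BBeta}, Y^{\BBeta,u^\BBeta}, Z^{\BBeta,u^\BBeta})$ and the measurability of $\BB$ (both already in hand). Everything else is routine once the equivalence of the generalized cost functions $\tJ$ and $J \circ \BB$ is in place.
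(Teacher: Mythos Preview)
Your proposal is correct and follows exactly the paper's approach: the paper's proof is a one-line reference to Theorem \ref{thm:rough-PMP1}, Proposition \ref{prop:v-condi-ep}, Remark \ref{rem:equiv-J}, and the fact that the adjoint BSDE is a standard It\^o BSDE, and you have faithfully spelled out how these pieces assemble (including the passage to the generalized cost via Remark \ref{rem:equiv-J} and the Fubini/measurability considerations needed to glue the $\P''$-a.s.\ statements into a $\P$-a.s.\ one).
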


In view of Example \ref{exam:LQ-rough} the above SMP allows us to solve generalized LQ pathwise stochastic control problems. 

\begin{thm}
  Under assumptions in the LQ Example \ref{exam:LQ-rough}, then we have
  $\tMV(\ome)=\MV(\BBeta)|_{\BBeta=\BB(\ome')},$  $\P$-a.s. Moreover, let $u^{*, \BBeta}$ be given by \eqref{eq:opti-u}, and define
  $\bu^*_t(\ome):=u^{*, \BBeta}_t(\ome'')|_{\BBeta=\BB(\ome')}$. Then $\bu^*$ is the optimal control to the anticipative pathwise LQ control problem \eqref{eq:def-tv}.
 \end{thm}

 \begin{rem}
   Our linear–quadratic model goes beyond earlier works on pathwise stochastic control, such as \cite{FLZ24, buckdahn2007pathwise}. Although the randomization argument in this section is similar to the accompanying work \cite{FLZ24} (except here we work with a progressively measurable version which is more familar to the stochastic control community), in contrast to \cite{FLZ24}, the present article focuses on the characterization of the optimal control, particularly within the framework allowing for unbounded coefficients, which results in a substantially different class of admissible controls. 
     
 \end{rem}

\section{Appendix}

This appendix collects two auxiliary results on progressively measurable stochastic processes. All processes are defined on a filtered probability space $(\Omega,\{\MF_t\}_t, \MF, \P)$ that satisfies the usual conditions and supports a Brownian motion $(B_t)_t$, and $(U,\MU)$ is a measurable space. 

The first proposition is taken from \cite[Theorem 18.25]{Olav21}. 

\begin{prop}\label{SY78-Prop5} 
Let $J :  
[0,T] \times \Omega \times U \to \mathbb{R}$ be a $\U$-progressively measurable function and $X$ be a continuous semimartingale, such that for any $u \in U$ and some $p>0,$
\[ 
\mathbb{E} \left[ \Big( \int_0^T (J^u_s)^2
d \langle X \rangle_s \Big)^p \right] < \infty . 
\]
Then the stochastic integral $\left( \int_0^t J^u_s dX_s \right)_{t }$ has a $\U$-progressively measurable version. 
\end{prop}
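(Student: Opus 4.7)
The plan is to construct a $\U$-progressively measurable version through approximation by elementary integrands, for which the integral is given by a finite sum and joint measurability is immediate. By the canonical decomposition $X = M + A$ of the continuous semimartingale into its continuous local martingale and finite-variation parts, it suffices to handle each component separately. For $\int J^u dA$, the integral is a pathwise Lebesgue--Stieltjes integral, and its $\U$-progressive measurability follows from a Fubini-type measurability argument for parameter-dependent Stieltjes integrals applied to the $\U$-progressively measurable integrand $J$ and the adapted process $A$.

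For the martingale component, I would first localize using stopping times $\tau_n := \inf\{t : \langle M\rangle_t > n\} \wedge T$ (which do not depend on $u$) and truncate via $J^{u,N} := J^u \mathbf{1}_{\{|J^u| \le N\}}$. Both operations preserve $\U$-progressive measurability and reduce the problem to the case of a bounded integrand and bounded quadratic variation. Then introduce the dyadic piecewise-constant approximants
\[
J^{(n), u}_s := \sum_{i=0}^{2^n - 1} J^u_{t_i^n}\,\mathbf{1}_{(t_i^n, t_{i+1}^n]}(s), \quad t_i^n := iT/2^n,
\]
each of which is $\U$-progressively measurable as a finite linear combination of $\U$-progressively measurable random variables. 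The corresponding stochastic integrals
\[
I^{(n),u}_t = \sum_i J^u_{t_i^n}\bigl(M_{t \wedge t_{i+1}^n} - M_{t \wedge t_i^n}\bigr)
\]
are finite sums of $\U$-progressively measurable quantities, hence $\U$-progressively measurable, and standard martingale convergence arguments show $J^{(n),u} \to J^u$ in $L^2(d\langle M\rangle \otimes d\mathbb{P})$ for each fixed $u$, whence by It\^o's isometry $I^{(n),u} \to \int_0^\cdot J^u_s dM_s$ in $\mathcal{H}^2$.

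The main obstacle is to upgrade this $u$-wise convergence to a single jointly $\U$-progressively measurable limit process. I would handle this via a functional monotone class argument: let $\mathcal{H}$ denote the class of bounded $\U$-progressively measurable integrands whose integral against $M$ admits a $\U$-progressively measurable version. Show that $\mathcal{H}$ contains all elementary tensor processes of the form $\phi(u,\omega)\mathbf{1}_{(a, b]}(s)$ with $\phi$ bounded and $\U \otimes \mathcal{F}_a$-measurable (for which the integral is simply $\phi(u,\omega)(M_{b \wedge t} - M_{a \wedge t})$, trivially $\U$-progressively measurable), that $\mathcal{H}$ is a vector space, and that it is closed under uniformly bounded monotone convergence. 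The closure step is the technical core: if $J^{(n)} \uparrow J$ with uniform bound $C$ and each $J^{(n)} \in \mathcal{H}$, then It\^o's isometry together with dominated convergence yields $L^2$-ucp convergence of the integrals $I^{(n), u} \to I^u$ with a rate independent of any auxiliary selections in $u$, and the limit may be realized in a $\U$-progressively measurable manner by taking an appropriate pointwise limit along a deterministic subsequence chosen via Borel--Cantelli. A functional monotone class theorem then extends $\mathcal{H}$ to all bounded $\U$-progressively measurable integrands, and removing the truncation/localization via the $p$th-moment hypothesis on $\int_0^T (J^u)^2 d\langle X\rangle$ yields the full statement.
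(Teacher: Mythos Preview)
The paper does not prove this proposition; it is simply cited as \cite[Theorem 18.25]{Olav21} without argument. Consequently there is no ``paper's proof'' to compare against, and your proposal stands as an independent sketch of how the cited result might be established.

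Your overall strategy (semimartingale decomposition, localization and truncation, then a functional monotone class argument starting from elementary integrands) is sound and is essentially the standard route. One caution: the intermediate dyadic left-endpoint scheme $J^{(n),u}_s = J^u_{t_i^n}$ on $(t_i^n,t_{i+1}^n]$ does \emph{not} in general converge to $J^u$ in $L^2(d\langle M\rangle\otimes d\P)$ for arbitrary bounded progressively measurable $J^u$; you correctly pivot to the monotone class argument, which bypasses this.

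The genuine soft spot is the closure step. You write that the limit ``may be realized in a $\U$-progressively measurable manner by taking an appropriate pointwise limit along a deterministic subsequence chosen via Borel--Cantelli.'' A deterministic subsequence will not do: the Borel--Cantelli rate of convergence of $I^{(n),u}$ in probability depends on $u$, so no single subsequence yields almost-sure uniform convergence for every $u$ simultaneously. What is needed is a \emph{$u$-dependent but $\U$-measurable} selection of indices $n_k^u$, exactly as in the paper's Lemma~\ref{prog-limit}. Once that lemma (or its argument) is invoked, the closure step goes through and the monotone class machinery completes the proof.
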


The next lemma is adapted from \cite[Theorem 62]{protter05}.

\begin{lemma}\label{prog-limit}
 Suppose that $Z^n:[0,T] \times \Omega \times U \rightarrow \R^d$ has a $\MU$-progressively measurable version in the sense of Definition \ref{def:prog-meas} and that $Z^n(\cdot, u)$ is a c\`adl\`ag process for each $u \in U$ and $n \in \N$. Let $Z:[0,T] \times \Omega \times U \rightarrow \R^d$. If the sequence $\{Z^n(\cdot, u)\}_n$ converges to $Z(\cdot,u)$ uniformly in time in probability for each $u \in U$, then $Z$ has a progressively measurable version that is $\P$-a.s. c\`adl\`ag for each $u \in U.$

     
\end{lemma}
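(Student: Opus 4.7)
The plan is to adapt the classical proof of Protter's \cite[Theorem 62]{protter05} to the parametric setting by extracting a fast-Cauchy subsequence whose index depends \emph{measurably} on $u$.

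First, I replace each $Z^n$ with its $\U$-progressively measurable version, so without loss of generality $Z^n$ itself is $\U$-progressively measurable and càdlàg in $t$ for each $u$. For each fixed $u$, uniform-in-time convergence of $Z^n(\cdot,u)$ to $Z(\cdot,u)$ in $\P$-probability implies the Cauchy property: for every $\varepsilon,\delta>0$ there exists $N$ such that
\[
\P\Big(\sup_{t\in[0,T]} |Z^m(t,\cdot,u) - Z^n(t,\cdot,u)| > \varepsilon\Big) < \delta \quad \text{for all } m,n \ge N.
\]

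Next, I construct the subsequence. Set $n_0(u):=0$ and inductively
\[
n_k(u) := \inf\Big\{ n > n_{k-1}(u) : \sup_{m \ge n} \P\Big(\sup_{t \in [0,T]} |Z^m(t,\cdot,u) - Z^n(t,\cdot,u)| > 2^{-k}\Big) \le 2^{-k} \Big\}.
\]
The Cauchy property ensures $n_k(u)<\infty$ for every $(k,u)$. To prove $u\mapsto n_k(u)$ is $\U$-measurable, I note that since $Z^n(\cdot,u)$ is càdlàg, $\sup_{t\in[0,T]} |Z^m(t,\omega,u)-Z^n(t,\omega,u)|$ equals the supremum over $[0,T]\cap\Q$ augmented by left limits, and is therefore jointly measurable in $(\omega,u)$. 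Fubini's theorem then yields $\U$-measurability of the probabilities appearing above, so each defining set is $\U$-measurable. Finally I define
\[
\tilde Z(t,\omega,u) := \begin{cases} \displaystyle\lim_{k\to\infty} Z^{n_k(u)}(t,\omega,u) & \text{if this limit exists in } \R^d, \\ 0 & \text{otherwise.} \end{cases}
\]
Its $\U$-progressive measurability follows by writing $Z^{n_k(u)}(t,\omega,u) = \sum_{N\in\N} \mathbf{1}_{\{n_k(u)=N\}}\, Z^N(t,\omega,u)$ and taking $\limsup$/$\liminf$ over $k$ to detect convergence.

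To identify $\tilde Z$, I fix $u$ and observe that $\sum_k \P(\sup_t |Z^{n_{k+1}(u)}(\cdot,u) - Z^{n_k(u)}(\cdot,u)| > 2^{-k}) \le \sum_k 2^{-k} < \infty$. Borel--Cantelli yields $\P$-a.s. uniform Cauchy convergence of $(Z^{n_k(u)}(\cdot,u))_k$; the limit is therefore càdlàg, and since it is the a.s. limit of a subsequence that also converges to $Z(\cdot,u)$ in probability, it agrees with $Z(\cdot,u)$ outside a $\P$-null set. Hence $\tilde Z(\cdot,u)$ is indistinguishable from $Z(\cdot,u)$ and is $\P$-a.s. càdlàg for each $u$, which is what we needed.

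The main obstacle is the parametric selection in the second step: the classical Protter argument extracts a single deterministic fast-Cauchy subsequence, but here the rate of convergence depends on $u$, and only a $u$-dependent subsequence will work. The delicate point is therefore showing that such a subsequence can be chosen \emph{measurably} in $u$, which ultimately rests on the joint measurability of pathwise suprema -- a fact that relies crucially on both the càdlàg regularity of the $Z^n(\cdot,u)$ and the $\U$-progressive measurability assumption.
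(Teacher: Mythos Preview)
Your proof is correct and follows essentially the same approach as the paper's: both extract a $u$-dependent fast-Cauchy subsequence via an inductive infimum, argue $\U$-measurability of $u\mapsto n_k(u)$ from the c\`adl\`ag property and Fubini, and define $\tilde Z$ as the pointwise limit (or $0$) of $Z^{n_k(u)}$. The only cosmetic differences are that the paper takes $\sup_{i,j\ge n}$ rather than your $\sup_{m\ge n}$ (with one index fixed to $n$) in the defining condition, and enforces $n>k\vee n_{k-1}^u$; your version already suffices for the Borel--Cantelli step, and your write-up is in fact more explicit about why the selected subsequence and the resulting $\tilde Z$ inherit the required measurability.
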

 
\begin{proof}
 
Without loss of generality, we assume that $Z^n$ is $\MU$-progressively measurable; else we use its progressive version. The processes 
$$
\Delta_t^{i,j}(\ome, u):=|Z^i-Z^j|(t,\ome,u)
$$
are $\MU$-progressively measurable. Let $n^u_0:=1, $ and for any $k \ge 1,$ let
$$
n_k^u:= \inf \{ n> (k \vee n_{k-1}^u): \sup_{i,j \ge n} P(\sup_{t\in[0,T]}\Delta^{i,j}_t > 2^{-k} ) \le 2^{-k} \}. 
$$
For any $u$, $\Delta^{i,j}(\cdot, u)$ is c\`adl\`ag and adapted, hence $n^u_k:U \rightarrow \N$ is measurable. Thus, for any fixed $(k,t)$, $\tZ^k_t(\ome,u):= Z^{n^u_k}_t(\ome,u)$ is measurable w.r.t. $\MF_t \otimes \MU,$ and for any $(\ome,u)$, is c\`adl\`ag in time. For any $(t,\ome,u),$ let 
$$
\tZ_t(\ome,u):=\left\{
\begin{array}{ll}
	\lim_k \tZ^k_t(\ome,u)  , \ \ & \text{if the limit exists}, \\[2mm]
	0, \ \ &\text{otherwise.}
\end{array}\right.
$$
Then $\tZ$ is measurable and adapted. For each $u \in U,$ since $\tZ^k(\cdot,u)$ converges uniformly in time to $Z(\cdot,u)$ in probability, we have $\P(\text{ for any $t \in [0,T],$ } \tZ_t(\cdot,u)=Z_t(\cdot,u))=1$, and thus $\tZ$ is a version of $Z$ Moreover, since for each fixed $u,$ $Z^n(\cdot, u)$ is c\`adl\`ag and uniformly in time converges to $Z$, we see that $\tZ$ is $\P$-a.s. c\`adl\`ag, and thus $\MU$-progressively measurable.
\end{proof}

\begin{proof}[Proof of Proposition \ref{prop:roughinteg}]
We only show the case when $p<\infty.$ For any $s<u<t,$ let $A_{s,t}:= Z_s \delta \eta_{s,t} + Z'_s \mn_{s,t}$, and we have 
$\delta A_{s,u,t}=-R^Z_{s,u} \delta \eta_{u,t} - Z'_{s,u} \mn_{u,t}.$ It follows by definition that 
\begin{align*}
\| \E_s[ \delta A_{s,u,t}  ]\|_p & \lesssim (\| \delta \eta\|_{\alpha} \|\E_\bullet R^Z\|_{\beta+\beta',p} + \|\mn\|_{2\alpha} \|\E_\bullet \delta Z'\|_{\beta',p})(t-s)^{\alpha+\beta+\beta'}\\
&=:\Gamma_1 (t-s)^{\alpha+\beta+\beta'}\\
 \|\delta A_{s,u,t} \|_p & \lesssim (\|R^Z\|_{\beta,p} \|\delta \eta\|_\alpha+ \|\delta Z'_{s,u} \|_{ p} \|\mn \|_{2\alpha} ) (t-s)^{\alpha+\beta}\\
& \lesssim [(\|\delta Z\|_{\beta,p}+ \sup_{t}\|Z'_t\|_p \|\delta \eta\|_{\alpha} )\|\delta \eta\|_{\alpha}+  \sup_{t}\|Z'_t\|_p \|\mn \|_{2\alpha} ] (t-s)^{\alpha+\beta }\\
& =: \Gamma_2 (t-s)^{\alpha+\beta }.
\end{align*}
where the implicit constant depends only on $(p,\alpha,\beta,\beta',T,d).$
Then according to the Stochastic Sewing lemma, i.e. \cite[Theorem 2.1]{Le20}, there exists a path $\mathcal{I}(Z,Z')_\cdot \equiv \int_0^\cdot (Z,Z') d\BBeta$
such that for any $s<t,$
\begin{align*}
    &\| \int_s^t Z_r d\BBeta_r - Z_s \delta \eta_{s,t}-Z'_s \mn_{s,t} \|_p \lesssim    \Gamma_2 |t-s|^{\alpha+\beta}    +   \Gamma_1 |t-s|^{\alpha+\beta+\beta'},\\  
   & \lVert \mathbb{E}_{s}  ( \int_{s}^{t} Z_{r} d\BX_r - Z_{s} \delta \eta_{s,t} - Z'_s \mn_{s,t}  ) \rVert_{p} 
    \lesssim   \Gamma_1 |t-s|^{\alpha +   \beta + \beta'},
\end{align*}
which implies our conclusion.

\end{proof}






\bibliographystyle{plain}
\bibliography{bibliography}

\end{document}